\makeindex \setcounter{tocdepth}{2}
\theoremstyle{plain}
\newtheorem{theorem}{Theorem}[section]
\newtheorem{proposition}[theorem]{Proposition}
\newtheorem{corollary}[theorem]{Corollary}
\newtheorem*{assumption*}{Assumption}
\newtheorem{lemma}[theorem]{Lemma}
\theoremstyle{definition}
\newtheorem{definition}[theorem]{Definition}
\newtheorem*{goal*}{Goal}
\newtheorem*{problem*}{Comment}
\newtheorem{notation}[theorem]{Notation}
\theoremstyle{remark}
\newtheorem{remark}[theorem]{Remark}
\def\lra{{\longrightarrow}}
\def\SL{{\rm SL}}
\def\PP{{\mathbb P}}
\DeclareMathOperator{\Gr}{Gr}
\DeclareMathOperator{\tors}{tors}
\DeclareMathOperator{\quot}{quot}
\DeclareMathOperator{\comp}{comp}
\DeclareMathOperator{\Fil}{Fil}
\DeclareMathOperator{\sym}{Sym}
\DeclareMathOperator{\dR}{dR} \DeclareMathOperator{\pr}{pr}
\DeclareMathOperator{\id}{id}
 \DeclareMathOperator{\cl}{cl}
\DeclareMathOperator{\et}{et}
\DeclareMathOperator{\End}{End} \DeclareMathOperator{\CH}{CH}
\DeclareMathOperator{\AJ}{AJ} \DeclareMathOperator{\Aut}{Aut}
\DeclareMathOperator{\spec}{Spec}
 \DeclareMathOperator{\Frob}{Frob}
\DeclareMathOperator{\Isog}{Isog}
\DeclareMathOperator{\alg}{alg}
\DeclareMathOperator{\HC}{HC}
\DeclareMathOperator{\GHC}{GHC}
\DeclareMathOperator{\Jac}{Jac}
\def\cN{\mathcal{N}}
\def\d{\mathrm{d}}
\def\Z{\mathbb{Z}}
\def\F{\mathbb{F}}
\def\Q{\mathbb{Q}}
\def\C{\mathbb{C}}
\def\R{\mathbb{R}}
\def\bdf{\begin{defn}}
\def\edf{\end{defn}}
\def\cH{\mathcal{H}}
\def\cO{\mathcal{O}}
\def\cE{\mathcal{E}}
\def\Gal{{\rm Gal}}
\def\corr{{\rm Corr}}
\def\ab{{\rm ab}}
\def\ab{\text{ab}}
\def\d1{d^{(1)}}
\def\d{\mathbf{d}}
\def\oh{\mathcal{O}}
\tikzset{
commutative diagrams/.cd,
arrow style=tikz,
diagrams={>=latex}}
\begin{document} 

 \title[Heegner cycles in Griffiths groups of Kuga--Sato varieties]{\small Heegner cycles in Griffiths groups of Kuga--Sato varieties}
\author{David T.-B. G. Lilienfeldt}
 \address{Department of Mathematics and Statistics, McGill University, Montreal, Canada}
\email{david.lilienfeldt@mail.mcgill.ca}
\curraddr{Einstein Institute of Mathematics, Hebrew University of Jerusalem, Israel}
\email{davidterborchgram.lilienfeldt@mail.huji.ac.il}
\date{\today}
\subjclass[2010]{11G15, 11F03, 14C25}
\keywords{Algebraic cycles, complex multiplication, Heegner cycles, generalised Heegner cycles, Abel--Jacobi map, Griffiths group, Chow group, Kuga--Sato varieties}

\begin{abstract}
The aim of this article is to prove, using complex Abel--Jacobi maps, that the subgroup generated by Heegner cycles associated with a fixed imaginary quadratic field in the Griffiths group of a Kuga--Sato variety over a modular curve has infinite rank. This generalises a classical result of Chad Schoen for the Kuga--Sato threefold, and complements work of Amnon Besser on complex multiplication cycles over Shimura curves. The proof relies on a formula for the images of Heegner cycles under the complex Abel--Jacobi map given in terms of explicit line integrals of even weight cusp forms on the complex upper half-plane. The latter is deduced from previous joint work of the author with Massimo Bertolini, Henri Darmon, and Kartik Prasanna by exploiting connections with generalised Heegner cycles. As a corollary, it is proved that the Griffiths group of the product of a Kuga--Sato variety with powers of an elliptic curve with complex multiplication has infinite rank. This recovers results of Ashay Burungale by a different and more direct approach.
\end{abstract}

\maketitle

\tableofcontents

\section{Introduction}

\subsection{Heegner cycles}

A generalisation of the conjecture of Birch and Swinnerton-Dyer \cite{BSD1, BSD2} involving algebraic cycles exists for higher dimensional algebraic varieties over number fields. It is due independently to Beilinson \cite{beilinson} and Bloch \cite{bloch}. The motive of a newform $f$ of level $\Gamma_1(N)$ and higher even weight $k+2$, with $k=2r\geq 2$, is cut out from the Kuga--Sato variety $W_k$ over $\Q$ of dimension $k+1$ and level $\Gamma_1(N)$ by the work of Scholl \cite{scholl}. The conjecture of Beilinson and Bloch roughly predicts that the order of vanishing of the $L$-function of $f$ over a number field $F$ at its center $s=r+1$ is accounted for by the existence of non-torsion elements in the Chow group $\CH^{r+1}(W_{k,F})_0$ of null-homologous algebraic cycles of codimension $r+1$ modulo rational equivalence. 

Given an imaginary quadratic field $K$ satisfying the Heegner hypothesis with respect to $N$ (all primes dividing $N$ split in $K$), a construction of cycles that could potentially account for the first central derivative of $L(f/K, s)$ was envisioned in the seminal work of Gross and Zagier \cite[\S V. 4]{GZ}. These are higher dimensional analogues of Heegner points known as Heegner cycles. They live in complex multiplication (CM) fibres of the Kuga--Sato variety $W_k \lra X_1(N)$, and lie above Heegner points via the map of modular curves $X_1(N)\lra X_0(N)$. Zhang \cite{shouwu} has proved a Gross--Zagier type formula relating $L'(f/K, r+1)$ to the Beilinson--Bloch height of a Heegner cycle. This formula has recently been generalised by Qiu \cite{congling}. A $p$-adic version of Zhang's formula has been obtained by Nekov\'{a}\v{r} \cite{nekovar}, with a step in the proof filled by Shnidman \cite{shnidman}, generalising previous work of Perrin-Riou \cite{perrinriou} for weight $2$ forms. A universal $p$-adic Gross--Zagier formula encompassing the previously known formulae has recently been obtained by Disegni \cite{disegniuniversal}. 
 Kolyvagin's \cite{koly88, grosskoly} method of Euler systems has also been adapted to the setting of Heegner cycles by Nekov\'{a}\v{r} \cite{NekovarKS}. 

The present article is concerned with questions about the algebraic geometric, or Hodge theoretic, incarnation of Heegner cycles. The aim is to give an explicit formula for their images under the complex Abel--Jacobi map, and deduce consequences for Griffiths groups of Kuga--Sato varieties. This in turn implies results about certain variants of generalised Heegner cycles introduced by Bertolini, Darmon, and Prasanna in \cite{bdp3}. 

\subsection{The Griffiths group}\label{s:intro_gr}

The Griffiths group $\Gr(X)=\bigoplus_{j=0}^{d} \Gr^j(X)$  of a smooth projective algebraic variety $X$ of dimension $d$ is the group of null-homologous algebraic cycles modulo algebraic equivalence. It is a rather mysterious quotient of the null-homologous Chow group. For codimension $1$ algebraic cycles, algebraic equivalence coincides with homological equivalence, hence $\Gr^1(X)$ is trivial. In higher codimension, this is however no longer the case. 
Indeed, Griffiths \cite{griffiths} showed that for a general quintic hypersurface $X$ of $\PP^4$ over $\C$, $\Gr^2(X)\otimes_{\Z} \Q$ is non-zero. Clemens \cite{clemens} then showed in the same case that $\dim_\Q \Gr^2(X)\otimes_{\Z} \Q=\infty$. Ceresa \cite{ceresa} proved that the Ceresa cycle $\iota(C)-[-1]^* \iota(C)\in \Gr^{g-1}(\Jac(C))$ is non-torsion for a generic curve of genus $g\geq 3$ over $\C$, where $\iota: C \hookrightarrow \Jac(C)$ is a fixed Abel--Jacobi embedding, and Nori \cite{Nori} later proved in the same case that $\dim_{\Q} \Gr^2(\Jac(C))\otimes_{\Z} \Q=\infty$. In recent related developments, Totaro \cite{totaro} showed that for a very general principally polarised complex abelian $3$-fold $X$, $\Gr^2(X)\otimes \Z/\ell\Z$ is infinite for any prime $\ell$ (see \cite{schoenmodulo, rosenschonsrinivas} for prior results).

Over number fields, the first explicit example of a variety for which such phenoma occurred was found by Harris \cite{harris} who studied the Ceresa cycle of the Fermat quartic over $\Q$, and proved that it is non-zero modulo algebraic equivalence by computing its image under the complex Abel--Jacobi map. Bloch \cite{bloch} then proved that the algebraic equivalence class of the Ceresa cycle of the Fermat quartic is non-torsion using a purely algebraic method involving the \'etale Abel--Jacobi map. 
Schoen \cite{schoen} studied Heegner cycles on the Kuga--Sato threefold $W$ of level $\Gamma(N)$ and proved that $\dim_{\Q} \Gr^2(W_{\bar{\Q}})\otimes_{\Z} \Q=\infty$. His method cleverly combines a complex Abel--Jacobi calculation with the algebraic method pioneered by Bloch. The approach of Schoen was generalised by Besser \cite{besser95} to Heegner cycles over Shimura curves associated with indefinite division quaternion algebras over $\Q$. 

\subsection{Generalised Heegner cycles}

Let $K$ be an imaginary quadratic field satisfying the Heegner hypothesis with respect to $N$. Let $A$ be an elliptic curve with CM by the maximal order $\oh_K$ of $K$ over the Hilbert class field $H$ of $K$. Fix an embedding $H\hookrightarrow \C$ such that $A_\C = \C/\oh_K$.
Bertolini, Darmon, and Prasanna introduced in \cite{bdp1} a distinguished collection of cycles, known as generalised Heegner cycles, on the product varieties $W_{r,H}\times_H A^r$ where $r\geq 1$ is an integer. These cycles account for the motives of cusp forms twisted by certain algebraic Hecke characters of infinite order. In a subsequent paper \cite{bdp3}, they further introduced variants of generalised Heegner cycles on the product varieties $W_{r_1,H}\times_H A^{r_2}$ where $r_1\geq r_2$ are non-negative integers of the same parity, and proved some non-vanishing results in Griffiths groups using $p$-adic Hodge theoretic methods. 
Burungale \cite{burungale17, burungale20} studied the $p$-adic syntomic Abel--Jacobi images of these variants of generalised Heegner cycles modulo $p$ (both in the case of modular curves and Shimura curves). A consequence of his work is that the subgroups generated by these cycles in the relevant Griffiths groups have infinite rank. His method uses tools from Iwasawa theory and crucially relies on the $p$-adic Gross--Zagier formula for generalised Heegner cycles of \cite{bdp1} and \cite{brooks}.
In joint work of the author with Bertolini, Darmon, and Prasanna \cite{bdlp}, the original approach of Schoen \cite{schoen} was adapted to the generalised Heegner cycles of \cite{bdp1}. In the case when $r\geq 2$ and the discriminant of $K$ is not $-3$ or $-4$ (assumed for simplicity), the main result of \cite{bdlp} proves that $\dim_{\Q} \Gr^{r+1}((W_{r,H}\times_H A^r)_{\bar \Q})\otimes_{\Z} \Q=\infty$ by exploiting a complex Abel--Jacobi calculation for generalised Heegner cycles. It is worth pointing out that the proof actually gives the infinitude of the rank of the Griffiths group over the maximal abelian extension $K^{\ab}$ of $K$.

\subsection{Main results}

Let $N\geq 5$ and $r\geq 1$ be integers, and let $k:=2r$. Let $W_k$ denote the Kuga--Sato variety over $\Q$ fibred over the modular curve $X_1(N)$ (defined in Section \ref{s:KS}).
In \cite[Remark 10]{bdlp}, it was noted that the techniques developed in \cite{bdlp} for generalised Heegner cycles should adapt to the case of Heegner cycles. Namely, it should be possible to establish a formula for the complex Abel--Jacobi images of Heegner cycles, and use such a formula to deduce that $\dim_{\Q} \Gr^{r+1}(W_{k, \bar{\Q}})\otimes_{\Z} \Q=\infty$. The first goal of the present article is to carry out this program.
This complements the work of Besser \cite{besser95}, which does not treat the case of the quaternion algebra $M_2(\Q)$.  The second goal is to use the result for $W_k$ to deduce similar results for products of $W_k$ with even powers of CM elliptic curves.

\subsubsection{The complex Abel--Jacobi map}\label{ss:CAJ}
The complex Abel--Jacobi map
\[
\AJ_{W_k}^{r+1} : \CH^{r+1}(W_{k,\C})_0 \lra J^{r+1}(W_{k,\C}):=\frac{(\Fil^{r+1} H^{k+1}_{\dR}(W_{k,\C}))^\vee}{H_{k+1}(W_{k,\C}(\C), \Z)}, 
\]
is a homomorphism from the codimension $r+1$ null-homologous Chow group to the Griffiths intermediate Jacobian of $W_{k,\C}$, a complex torus. The complex vector space $S_{k+2}(\Gamma_1(N))$ of holomorphic cusp forms of weight $k+2$ and level $\Gamma_1(N)$ is naturally identified with $H^{k+1,0}(W_{k,\C})$ via the association $f \mapsto \omega_f$ where $\omega_f(\C/\langle 1,\tau\rangle, 1/N):=f(\tau)(2\pi i dw)^{k}\otimes (2\pi i d\tau)$ for $\tau$ in the complex upper half-plane $\cH$ and $w$ the standard coordinate on the torus $\C/\langle 1,\tau\rangle$ with lattice $\langle 1,\tau\rangle:=\Z \oplus \Z\tau$.

\subsubsection{Setup and assumptions}\label{ss:setup}

Let $N\geq 5$ and $r\geq 1$ be integers, and let $k:=2r$.
Let $K$ be an imaginary quadratic field satisfying the Heegner hypothesis with respect to $N$: all primes dividing $N$ are split in $K$. We impose no restrictions on the discriminant $-d_K$ of $K$ (whereas \cite{bdlp} assumed $-d_K\neq -3, -4$ for simplicity). Choose an ideal $\cN$ of $\oh_K$ such that $\oh_K/\cN = \Z/N\Z$ (which exists by the Heegner hypothesis). Let $A$ be an elliptic curve with CM by the maximal order $\oh_K$ of $K$ over the Hilbert class field $H$ of $K$. Fix an embedding $H\hookrightarrow \C$ such that  $A_\C = \C/\oh_K$ and a choice of $\Gamma_1(N)$-level structure $t\in A[\cN]$ (i.e., a generator $t$ of the cyclic group $A[\cN]$). Associated to any (isomorphism class of) isogeny $\varphi : A \lra A'$ of elliptic curves whose kernel intersects $A[\cN]$ trivially is a Heegner cycle denoted $\tilde\Delta_\varphi^{\HC} \in \CH^{r+1}(W_{k, \bar \Q})_0$ (defined in Section \ref{s:HC}). 

\subsubsection{The complex Abel--Jacobi formula}
By definition, $\epsilon_{W_k} \tilde\Delta_\varphi^{\HC}=\tilde\Delta_\varphi^{\HC}$ where $\epsilon_{W_k}$ denotes Scholl's projector with rational coefficients (defined in Section \ref{s:KS}). Let $\tilde\epsilon_{W_k}$ denote the normalised correspondence with integral coefficients (see Definition \ref{rem:intcycles}). By functoriality of the complex Abel--Jacobi map, we will solely be interested in the piece of the Abel--Jacobi map that survives after composing with $\tilde\epsilon_{W_k}$: 
\[
\AJ_{W_k}:=\tilde\epsilon_{W_k}\circ \AJ_{W_k}^{r+1} : \CH^{r+1}(W_{k,\C})_0 \lra \frac{(\Fil^{r+1} \tilde\epsilon_{W_K} H^{k+1}_{\dR}(W_{k,\C}))^\vee}{\Pi_k},
\]
with $\Pi_k:=\tilde\epsilon_{W_k} H_{k+1}(W_{k,\C}(\C), \Z)$. By properties of Scholl's projector $\epsilon_{W_k}$, the complex vector space $\Fil^{r+1} \epsilon_{W_K} H^{k+1}_{\dR}(W_{k,\C})$ is identified with $S_{k+2}(\Gamma_1(N))$ via the association described in Section \ref{ss:CAJ} (see Proposition \ref{prop:epsiloncusp}). We may thus view $\AJ_{W_k}(\tilde\Delta_\varphi^{\HC})$ as an element of $S_{k+2}(\Gamma_1(N))^\vee$ modulo some lattice. In Section \ref{s:caj}, we will define a slightly larger lattice $L'_k$ in the dual space of cusp forms of weight $k+2$ and level $\Gamma_1(N)$, which has the advantage that it allows for more explicit formulae. 
The first main result is a formula for the complex Abel--Jacobi image of a Heegner cycle viewed in the torus $S_{k+1}(\Gamma_1(N))^\vee/L'_k$:

\begin{theorem}\label{intro:thm:CAJ}
With the assumptions of Section \ref{ss:setup},
let $\varphi : \C/\oh_K \lra \C/\langle 1, \tau' \rangle$ be an isogeny of degree $d_\varphi$ whose kernel intersects $A[\cN]$ trivially and such that $\varphi(t)=\frac{1}{N} \pmod{\langle 1, \tau' \rangle}$. For any cusp form $f$ of weight $k+2$ and level $\Gamma_1(N)$, we have the following equality modulo the lattice $L'_k$:
 \[
 \AJ_{W_k}((2^k k!)^2 d_\varphi^r\tilde\Delta_{\varphi}^{\HC})(\omega_f)=\frac{(-2\sqrt{-d_K})^rd_\varphi^k(2\pi i)^{r+1}(2^{2k}N^k(k!)^2)^2}{(\tau'-\bar{\tau}')^r} \int_{i\infty}^{\tau'}(z-\tau')^r(z-\bar{\tau}')^r f(z) dz.
 \]
 \end{theorem}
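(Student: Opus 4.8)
The plan is to deduce the formula from the complex Abel--Jacobi formula for generalised Heegner cycles established in \cite{bdlp}, via an explicit correspondence relating $\tilde\Delta_\varphi^{\HC}$ to such a cycle. Since $W_k=W_{2r}$ is the $2r$-fold fibre product over $X_1(N)$ of the universal elliptic curve, the relevant object is the generalised Heegner cycle $\Delta_\varphi\in\CH^{k+1}((W_k\times A^k)_{\bar\Q})_0$ of \cite{bdp1} attached to the isogeny $\varphi$ and the weight parameter $k=2r$: it is cut out by the projector $\epsilon_{W_k}\otimes\epsilon_{A^k}$, where $\epsilon_{A^k}$ selects $(H^1A)^{\otimes k}$ inside $H^k(A^k)$; it also detects $S_{k+2}(\Gamma_1(N))$; and \cite{bdlp} expresses $\AJ_{W_k\times A^k}(\Delta_\varphi)(\omega_f\otimes\omega_A^{\otimes(k-j)}\otimes\eta_A^{\otimes j})$, for $\omega_A,\eta_A$ the de Rham classes of $A$ of types $(1,0),(0,1)$, as an explicit constant times $\int_{i\infty}^{\tau'}(z-\tau')^{k-j}(z-\bar\tau')^{j}f(z)\,dz$.

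First I would set up the correspondence. Write $\vartheta:=\sqrt{-d_K}\in\oh_K$ and let $\Gamma_\vartheta\subset A^{2r}=A^k$ be the $r$-fold product of the graph of multiplication by $\vartheta$ on $A$, a cycle of codimension $r$. Let $\Phi$ be the correspondence from $W_k\times A^k$ to $W_k$ sending $\alpha$ to $p_{W_k,*}(\alpha\cdot[W_k\times\Gamma_\vartheta])$. Over the CM fibre $E_{\tau'}=\C/\langle1,\tau'\rangle$ the cycle underlying $\Delta_\varphi$ is the $k$-fold product of the graph of $\varphi$ inside $(E_{\tau'}\times A)^k$; using $\vartheta\circ\varphi=\varphi\circ\vartheta$ and $\deg\varphi=d_\varphi$, a direct computation identifies $\Phi_*(\Delta_\varphi)$ with $c_1\,d_\varphi^{\,r}\,\tilde\Delta_\varphi^{\HC}$, the factor $d_\varphi^{\,r}$ coming from $\varphi$ being $d_\varphi$-to-one on the relevant locus and the nonzero rational $c_1$ being a product of the normalisations $2^k k!$ relating Scholl's projector to $\tilde\epsilon_{W_k}$ (and of the analogous idempotent $\epsilon_{A^k}$ on $A^k$), which is the source of $(2^k k!)^2$ in the statement.

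By functoriality of $\AJ$ under correspondences, $\AJ_{W_k}(c_1 d_\varphi^r\tilde\Delta_\varphi^{\HC})(\omega_f)=\AJ_{W_k\times A^k}(\Delta_\varphi)(\Phi^*\omega_f)$, and a transpose-correspondence computation gives $\Phi^*\omega_f=\omega_f\otimes\cl(\Gamma_\vartheta)$, whose relevant component after $\epsilon_{A^k}$ is $\omega_f\otimes g_\vartheta^{\otimes r}$ for $g_\vartheta\in H^1(A_\C)\otimes H^1(A_\C)$ the graph class of $\vartheta$. As a $(1,1)$-class with $\vartheta$ acting by $\vartheta$ on $\omega_A$ and by $-\vartheta$ on $\eta_A$, $g_\vartheta$ equals $\sqrt{-d_K}$ times a period of $A$ times the symmetric expression $\omega_A\otimes\eta_A+\eta_A\otimes\omega_A$; raising to the $r$-th power and re-expressing the periods of $A$ through those of $E_{\tau'}$ via $\varphi$ turns $g_\vartheta^{\otimes r}$, up to permutations of tensor factors that are immaterial when paired with the symmetric $\Delta_\varphi$, into $\tfrac{(-2\sqrt{-d_K})^r}{(\tau'-\bar\tau')^r}$ times powers of $d_\varphi$ and $2\pi i$ times $\omega_A^{\otimes r}\otimes\eta_A^{\otimes r}$. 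Plugging the balanced case $j=r$ of the \cite{bdlp} formula into the right-hand side then produces the integral $\int_{i\infty}^{\tau'}(z-\tau')^r(z-\bar\tau')^r f(z)\,dz$ of the statement.

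I expect the real work to lie in assembling all the normalisation constants correctly — the $2^k k!$ from Scholl's projectors, the powers of $N$ produced by the level-structure condition $\varphi(t)=\tfrac1N$ and the ensuing scaling of fibre coordinates in the identification of $\Fil^{r+1}\epsilon_{W_k}H^{k+1}_{\dR}$ with $S_{k+2}(\Gamma_1(N))$ of Proposition \ref{prop:epsiloncusp}, the powers of $2\pi i$ from the Betti--de Rham comparison, and the comparison of periods of $A$ with those of $E_{\tau'}$ — and in checking they combine into $\tfrac{(-2\sqrt{-d_K})^rd_\varphi^k(2\pi i)^{r+1}(2^{2k}N^k(k!)^2)^2}{(\tau'-\bar\tau')^r}$; in parallel, the identity a priori holds only modulo the image under $\Phi_*$ of the period lattice of \cite{bdlp}, and one must verify it holds modulo $L'_k$, which is precisely why $\Pi_k$ is enlarged to $L'_k$ in Section \ref{s:caj}. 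A final point, absent from \cite{bdlp} where $-d_K\notin\{-3,-4\}$ was imposed, is to confirm that the generalised Heegner cycle formula remains valid when $\oh_K^\times\supsetneq\{\pm1\}$, a routine matter of tracking the extra automorphisms of $A$, after which the reduction above applies unchanged.
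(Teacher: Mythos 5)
Your overall route is the same as the paper's: relate $\tilde\Delta^{\HC}_\varphi$ to the generalised Heegner cycle on $X_k=W_{k,H}\times_H A^k$ by a correspondence built from products of graphs of CM endomorphisms, use functoriality of the Abel--Jacobi map, compute the pullback of $\omega_f$ (correctly noting that only the balanced component $\omega_A^r\eta_A^r$ survives), and invoke the $j=r$ case of \cite[Theorem 1]{bdlp}. However, there is a genuine gap at your pushforward step. You build $\Phi$ from the $r$-fold product of the graph of $[\sqrt{-d_K}]$ on $A$ and claim $\Phi_*(\Delta_\varphi)=c_1\,d_\varphi^{\,r}\,\tilde\Delta^{\HC}_\varphi$. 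This fails in general: intersecting the product of graphs of $\varphi$ with $W_k\times\Gamma_\vartheta$ and pushing to $W_k$ yields (up to $\epsilon_{W_k}$) the $r$-fold product of the curve $(\varphi\times\varphi)(\Gamma_{[\sqrt{-d_K}]_A})\subset A'\times A'$, which is \emph{not} the graph $\Gamma_{[d_\varphi\sqrt{-d_K}]}$ used to define the Heegner cycle. Indeed $A'$ has CM by $\oh_{c_\varphi}$ and $\sqrt{-d_K}\notin\oh_{c_\varphi}$ as soon as $c_\varphi>2$, so $[\sqrt{-d_K}]$ is not an endomorphism of $A'$; and an identity $\varphi\circ[\sqrt{-d_K}]_A=\psi\circ\varphi$ with $\psi\in\End(A')$ would force $\sqrt{-d_K}\,\langle 1,\tau'\rangle\subset\langle 1,\tau'\rangle$, which is false in those cases. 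Even when $A'$ does have CM by $\oh_K$, the image curve is $\Gamma_{[\sqrt{-d_K}]_{A'}}$ rather than $\Gamma_{[d_\varphi\sqrt{-d_K}]_{A'}}$, so it is still not $\Upsilon^{\HC}_\varphi$. Moreover your multiplicity is off: the degree of $(t_i)\mapsto(\varphi(t_i),\varphi(\sqrt{-d_K}\,t_i))$ onto its image is $\vert\ker\varphi\cap[\sqrt{-d_K}]^{-1}\ker\varphi\vert^{\,r}$, which equals $1$ (not $d_\varphi^{\,r}$) whenever $\sqrt{-d_K}$ does not preserve $\ker\varphi$ --- for instance for a cyclic $q$-isogeny with $q$ inert in $K$, exactly the isogenies used in the sequel of the paper. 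Consequently functoriality transfers the \cite{bdlp} formula to a cycle different from the one in the statement, and no amount of constant-chasing repairs this.

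The paper's remedy (Proposition \ref{prop:relation}) is to build the correspondence from the graph of $(\id_A,[d_\varphi\sqrt{-d_K}])$: then $\varphi\circ[d_\varphi\sqrt{-d_K}]_A=[d_\varphi\sqrt{-d_K}]_{A'}\circ\varphi$, so the image is exactly $\Upsilon^{\HC}_\varphi$, and since $[d_\varphi\sqrt{-d_K}]$ annihilates $\ker\varphi\subset A[d_\varphi]$ the multiplicity is exactly $d_\varphi^{\,r}$; this built-in factor $d_\varphi$ is also what produces, via the pullback computation of Lemma \ref{pullback}, the factor $(2d_\varphi\sqrt{-d_K})^r$ and hence the power $d_\varphi^{k}$ on the right-hand side. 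If you replace your $\Gamma_\vartheta$ by the $r$-fold product of the graph of $(\id_A,[d_\varphi\sqrt{-d_K}])$, the rest of your outline matches the paper's proof (Lemma \ref{functoriality}, Lemma \ref{pullback}, then \cite[Theorem 1]{bdlp} with $\omega_A=\varphi^*(2\pi i\,dw)$); your remaining concerns --- passing from the period lattice to $L'_k$, the bookkeeping of $m_{k,k}$ versus $2^kk!$, and the case $d_K\in\{3,4\}$ --- are indeed routine and are handled there.
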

 


\begin{remark}
The proof of Theorem \ref{intro:thm:CAJ} that we give avoids any adaptation of the Abel--Jacobi calculations of \cite{bdlp} (although the method of \cite{bdlp} can be adapted). 
Instead, by exhibiting correspondences from $W_{k,H}\times_H A^k$ to $W_{k,H}$ which map generalised Heegner cycles to Heegner cycles (see Proposition \ref{prop:relation}), we use functorial properties of the complex Abel--Jacobi map to deduce Theorem \ref{intro:thm:CAJ} directly from the formula for generalised Heegner cycles \cite[Theorem 1]{bdlp}.  
\end{remark}

\begin{remark}
Theorem \ref{intro:thm:CAJ}  implies the compatibility of the conjectural partial generalisations of the Gross--Kohnen--Zagier theorem to higher weights formulated by Hopkins in \cite{hopkins} with the conjectures of Beilinson and Bloch. This is discussed further in Remark \ref{rem:hopkins}.
\end{remark}

\subsubsection{Griffiths groups of Kuga--Sato varieties}

Using Theorem \ref{intro:thm:CAJ}, we prove the second main theorem:
\begin{theorem}\label{intro:main:thm}
With the assumptions of Section \ref{ss:setup}, the subgroup of $\Gr^{r+1}(W_{k, K^{\ab}})$ generated by the algebraic equivalence classes of the Heegner cycles $\tilde\Delta_{\varphi}^{\HC}$ indexed by isomorphism classes of isogenies $\varphi : A\lra A'$ whose kernels intersect $A[\cN]$ trivially has infinite rank.
\end{theorem}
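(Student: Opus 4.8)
The plan is to deduce Theorem \ref{intro:main:thm} from the explicit Abel--Jacobi formula of Theorem \ref{intro:thm:CAJ}, following the strategy pioneered by Schoen \cite{schoen} and used in \cite{bdlp}: one combines a complex Abel--Jacobi computation, which produces \emph{linearly independent} classes over $\C$ (hence modulo homological equivalence, hence in particular modulo algebraic equivalence after a rationality argument), with an algebraic descent argument to bring the field of definition down to $K^{\ab}$. First I would fix a newform $f \in S_{k+2}(\Gamma_1(N))$ (or rather work with the full space, but it suffices to test against one well-chosen $f$) and consider the countable family of isogenies $\varphi: A \to A'$ parametrised by Heegner points $\tau' \in \cH$ with the prescribed level-structure condition $\varphi(t) = \frac{1}{N}$. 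Theorem \ref{intro:thm:CAJ} expresses $\AJ_{W_k}(\tilde\Delta_\varphi^{\HC})(\omega_f)$, up to an explicit nonzero algebraic scalar, in terms of the line integral
\[
I_f(\tau') := \frac{1}{(\tau'-\bar\tau')^r}\int_{i\infty}^{\tau'} (z-\tau')^r (z-\bar\tau')^r f(z)\, dz.
\]
The crucial input is that these integrals, as $\tau'$ ranges over the infinitely many Heegner points, take values that are \emph{not} all contained in any fixed finitely generated subgroup of $\C$ modulo the lattice $L'_k$; more precisely, their images in $S_{k+2}(\Gamma_1(N))^\vee / L'_k$ span an infinite-rank subgroup.

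The key step, and the one I expect to be the main obstacle, is establishing this infinite-rank (equivalently, $\Q$-linear independence of infinitely many of the $I_f(\tau_i')$ modulo a finitely generated group) statement. The cleanest route is a transcendence / growth argument: one shows that along a suitable sequence of Heegner points $\tau_n'$ (for instance, CM points associated to orders of increasing conductor, or to ideals of increasing norm, so that $\tau_n' \to i\infty$ or otherwise degenerate), the integrals $I_f(\tau_n')$ grow or oscillate in a controlled way that is incompatible with lying in a fixed lattice plus finitely generated group. Concretely, as $\im(\tau') \to \infty$ one can expand $f(z) = \sum a_n q^n$ and compute $\int_{i\infty}^{\tau'}(z-\tau')^r(z-\bar\tau')^r f(z)\,dz$ asymptotically; the leading term involves $a_1 e^{2\pi i \tau'}$ times a polynomial in $\im(\tau')$, and since $\im(\tau')$ takes infinitely many distinct values $\sqrt{d_K}\cdot(\text{something})/c$ this produces infinitely many linearly independent real numbers. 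Alternatively, and this is likely what the paper does to inherit the result ``for free,'' one transports the linear independence already proved in \cite{bdlp} (valid over $K^{\ab}$) through the correspondences of Proposition \ref{prop:relation}: since those correspondences send generalised Heegner cycles to Heegner cycles and the complex Abel--Jacobi map is functorial (as used in the proof of Theorem \ref{intro:thm:CAJ}), an infinite-rank subgroup in $\Gr^{r+1}((W_{r,H}\times_H A^r)_{K^{\ab}})$ maps to an infinite-rank subgroup in $\Gr^{r+1}(W_{k,K^{\ab}})$ — provided one checks the relevant map on Griffiths groups has image of infinite rank, which amounts to a rank estimate on the induced map of cycle classes.

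Granting the analytic independence, the remaining steps are more formal. \emph{Rationality and algebraic equivalence}: the Abel--Jacobi map $\AJ_{W_k}$ kills classes algebraically equivalent to zero (indeed it factors through homological equivalence only up to the image of the Jacobian of the parameter space, but the standard argument shows algebraic equivalence maps into the image of an abelian variety, hence into a divisible — in fact, in the relevant Hodge-theoretic target, a ``small'' — subgroup), so $\Q$-linear independence of the $\AJ$-images forces $\Q$-linear independence of the classes in $\Gr^{r+1}(W_{k,\C})\otimes\Q$; one must be slightly careful and invoke the precise statement that the intermediate Jacobian of an abelian variety's algebraically-trivial part is itself an abelian variety, so a nontorsion class with nontorsion $\AJ$-image in the transcendental part cannot be algebraically trivial. \emph{Descent to $K^{\ab}$}: the Heegner cycles $\tilde\Delta_\varphi^{\HC}$ are defined over abelian extensions of $K$ (this is built into their construction in Section \ref{s:HC}, via CM theory and the explicit level/isogeny data), so the subgroup they generate already lives in $\Gr^{r+1}(W_{k,K^{\ab}})$; combining with the $\C$-independence of the images gives that this subgroup has infinite rank, which is exactly the assertion. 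I would organise the write-up as: (i) recall the target torus and the vanishing of $\AJ$ on algebraic equivalence; (ii) reduce to showing the family $\{I_f(\tau')\}$ has infinite-rank image mod $L'_k$; (iii) prove this either by the asymptotic expansion above or by pullback from \cite{bdlp} via Proposition \ref{prop:relation}; (iv) conclude, noting the field of definition.
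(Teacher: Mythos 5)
You have identified the right inputs (Theorem \ref{intro:thm:CAJ}, factorisation of $\AJ_{W_k}$ through algebraic equivalence, rationality of the cycles over $K^{\mathrm{ab}}$), but the step you yourself flag as the main obstacle is a genuine gap, and it is not the step the paper actually takes. The asymptotic expansion of $I_f(\tau')$ only shows that the Abel--Jacobi images are nonzero and tend to $0$ in the quotient torus $\C^d/L$; this gives that each image has \emph{large, possibly finite} order (Propositions \ref{cor:est}, \ref{cor:estbi}), and nothing more. Distinctness of the values $\im(\tau')$ does not yield $\Q$-linear independence of the integrals modulo the lattice $L'_k$ (let alone modulo an arbitrary finitely generated subgroup), and no transcendence result is available to supply it; the paper never proves any analytic independence statement. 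Instead, the upgrade from ``large order in $J^{r+1}(W_{k,\C})$'' to ``infinite order in $\Gr^{r+1}(W_{k,F_{pq}})$'' is made by comparing the complex Abel--Jacobi map on torsion with Bloch's $\ell$-adic map (Propositions \ref{prop:blochcomp} and \ref{lem:grbloch}) and proving the uniform finiteness of $H^{k+1}_{\et}(W_{k,\bar\Q},\Q/\Z(r+1))^{\Gal(\bar\Q/F_\infty)}$ (Proposition \ref{prop:finite}, via good reduction at two inert primes and Deligne's bounds): any torsion class in the Griffiths group over a field $F_{pq}$ is killed by the fixed integer $M_r$, contradicting order $>M_r$. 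This \'etale ingredient is entirely absent from your plan; without it you cannot conclude that even a single Heegner cycle is non-torsion modulo algebraic equivalence.

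The second missing idea is how infinite \emph{rank} is obtained: not from independence of Abel--Jacobi images, but from the Galois action on an explicit subfamily. For each prime $\ell$ the paper chooses $q$ inert with $\ell\mid (q+1)/u_K$ and $p/q$ large, so that $\Gal(F_{pq}/F_p)$ contains a cyclic group $\tilde G_\ell$ of order $\ell$ permuting the cycles $\tilde\Delta^{\HC}_{p,q,\beta}$, $\beta\in\PP^1(\F_q)$, simply transitively (Propositions \ref{prop:transt}, \ref{transpq}, \ref{prop:galcyc}); the group-ring argument with $\Q[\tilde G_\ell]\simeq\Q\times\Q(\zeta_\ell)$, whose only proper ideals are the augmentation ideal and the norm line, together with the non-vanishing of the classes of $\tilde\Delta^{\HC}_{p,q,\infty}$ and of $\tilde\Delta^{\HC}_{p,q,\beta_\ell}-\tilde\Delta^{\HC}_{p,q,\infty}$ (Corollary \ref{cor:inford}), forces rank at least $\ell-1$. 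Finally, your fallback of transporting the result of \cite{bdlp} through Proposition \ref{prop:relation} does not work as stated: the correspondence $\Pi_{k,\varphi}$ depends on $\deg\varphi$ (through $[d_\varphi\sqrt{-d_K}]$), so there is no single homomorphism of Griffiths groups carrying the whole subgroup generated by generalised Heegner cycles onto the subgroup generated by Heegner cycles; one would need the degree-fixed refinement inside the proof of \cite{bdlp}, which moreover assumes $-d_K\neq-3,-4$, an assumption this theorem removes. The paper's actual deduction goes in the opposite direction: Corollary \ref{coro} derives the statement for the variants from the Heegner cycle case.
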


\begin{remark}
Theorem \ref{intro:main:thm} implies in particular that $\dim_{\Q} \Gr^{r+1}(W_{k, \bar \Q})\otimes_{\Z} \Q=\infty$. In the case when $W_k$ is a threefold (the case $r=1$) and the congruence subgroup is $\Gamma(N)$, the latter is Schoen's main theorem in \cite{schoen}. However, Schoen's proof proceeds by studying Heegner cycles attached to varying imaginary quadratic fields, so even in the case $r=1$, Theorem \ref{intro:main:thm} is a strengthening of his result. As already noted, Theorems \ref{intro:thm:CAJ} and \ref{intro:main:thm} together with their proofs complement the work of Besser \cite{besser95}, which is valid for Kuga--Sato varieties over indefinite quaternionic Shimura curves.
\end{remark}

\begin{remark}
Let $F$ be a number field and fix a prime $\ell$. The $\ell$-adic \'etale Abel--Jacobi map \cite{bloch}
\[
\AJ^{r+1}_{\et} : \CH^{r+1}(W_{k,F})_0 \lra H^1(\Gal(\bar \Q / F), H^{k+1}_{\et}(W_{k,\bar \Q}, \Q_\ell(r+1))),
\]
is a homomorphism from the codimension $r+1$ null-homologous Chow group of cycles rational over $F$ to the first (continuous) Galois cohomology group of the $\Gal(\bar\Q/F)$-module $H^{k+1}_{\et}(W_{k,\bar \Q}, \Q_\ell(r+1))$. It is conjectured, for cycles defined over number fields, to be injective up to torsion \cite[Conjecture 9.15]{jannsen}. In the course of proving Theorem \ref{intro:main:thm}, we (roughly) show using Theorem \ref{intro:thm:CAJ} that $\Delta_{\varphi}^{\HC}$ has infinite order in the Griffiths group asymptotically as $\deg(\varphi)$ goes to infinity. Thus, conjecturally, our results imply asymptotic non-vanishing results for $\ell$-adic \'etale Abel--Jacobi images of Heegner cycles.
\end{remark}

\subsubsection{Griffiths groups of products of Kuga--Sato varieties with even powers of CM elliptic curves}

We next turn our attention to the variants of generalised Heegner cycles introduced in \cite{bdp3}. Retain the notations and assumptions of Section \ref{ss:setup}. Given an even integer $0\leq k'=2r'\leq k$, the variants of generalised Heegner cycles are cycles $\tilde\Delta_{k,k', \varphi}$ of codimension $r+r'+1$ on $(W_{k, H}\times_H A^{k'})_{K^{\ab}}$ indexed by isomorphism classes of isogenies $\varphi : A \lra A'$ whose kernels intersect $A[\cN]$ trivially (defined in Section \ref{s:HC}). In the case $k'=0$ these are Heegner cycles, while in the case $k'=k$ they are the generalised Heegner cycles of \cite{bdp1}. We prove that Heegner cycles are images under certain correspondences of these variants of generalised Heegner cycles (see Proposition \ref{prop:relation}). The third main result then follows from Theorem \ref{intro:main:thm}:

\begin{theorem}\label{main}
Under the assumptions of Section \ref{ss:setup}, if $0\leq k'=2r'\leq k$ is another even integer, then the subgroup of $\Gr^{r+r'+1}((W_{k,H}\times_H A^{k'})_{K^{\ab}})$ generated by the algebraic equivalence classes of variants of generalised Heegner cycles $\tilde\Delta_{k,k',\varphi}$ indexed by isomorphism classes of isogenies $\varphi : A \lra A'$ whose kernels intersect $A[\cN]$ trivially has infinite rank.
\end{theorem}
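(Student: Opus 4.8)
The plan is to deduce Theorem \ref{main} from Theorem \ref{intro:main:thm} by means of the correspondences of Proposition \ref{prop:relation}, which exhibit Heegner cycles $\tilde\Delta_\varphi^{\HC}$ on $W_k$ as images, under an explicit algebraic correspondence, of the variants of generalised Heegner cycles $\tilde\Delta_{k,k',\varphi}$ on $W_{k,H}\times_H A^{k'}$. The key point is that algebraic correspondences act on Chow groups of null-homologous cycles, respect algebraic equivalence, and hence descend to homomorphisms between Griffiths groups; moreover they are compatible with base change to $K^{\ab}$. So the correspondence from Proposition \ref{prop:relation} induces a group homomorphism
\[
\Psi : \Gr^{r+r'+1}\big((W_{k,H}\times_H A^{k'})_{K^{\ab}}\big) \lra \Gr^{r+1}(W_{k,K^{\ab}})
\]
sending the class of $\tilde\Delta_{k,k',\varphi}$ to a nonzero rational multiple (independent of $\varphi$, or at worst depending on $\varphi$ only through a bounded constant) of the class of $\tilde\Delta_\varphi^{\HC}$.

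With $\Psi$ in hand, the argument is a formal rank comparison. Let $V \subseteq \Gr^{r+r'+1}((W_{k,H}\times_H A^{k'})_{K^{\ab}})$ be the subgroup generated by the classes of the $\tilde\Delta_{k,k',\varphi}$ as $\varphi$ ranges over isomorphism classes of isogenies $\varphi : A \to A'$ with kernel meeting $A[\cN]$ trivially. Then $\Psi(V)$ is contained in the subgroup of $\Gr^{r+1}(W_{k,K^{\ab}})$ generated by the classes of the $\tilde\Delta_\varphi^{\HC}$ over the same index set, and in fact, up to the bounded scalars, $\Psi(V)$ \emph{equals} that subgroup. By Theorem \ref{intro:main:thm}, the latter has infinite rank; hence $\Psi(V)$ has infinite rank; hence $V$ has infinite rank, since the rank of a subgroup of abelian groups cannot increase under a homomorphism. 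Concretely: if $V$ had finite rank $\rho$, then $V \otimes_\Z \Q$ would be a $\Q$-vector space of dimension $\rho$, its image $\Psi(V)\otimes_\Z\Q$ would have dimension at most $\rho$, and the span of the $\tilde\Delta_\varphi^{\HC}$ in $\Gr^{r+1}(W_{k,K^{\ab}})\otimes_\Z\Q$ would then be finite-dimensional, contradicting Theorem \ref{intro:main:thm}.

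I expect the only real content to be already packaged in Proposition \ref{prop:relation}: namely the construction of the correspondence and the verification that it carries $\tilde\Delta_{k,k',\varphi}$ to a controlled multiple of $\tilde\Delta_\varphi^{\HC}$ (this is where the geometry of the fibre products $W_k \times_X A^{k'}$, the projectors $\tilde\epsilon$, and the definition of the cycles all enter). Granting that, the main obstacle in the present theorem is purely bookkeeping: one must check that the scalar relating the two families of cycles does not vary with $\varphi$ in a way that could kill infinitely many classes (e.g. that it is a fixed nonzero rational number, or a power of $\deg\varphi$ that is in any case nonzero), so that the finitely-generated-rank argument goes through cleanly. I would therefore (i) recall the statement of Proposition \ref{prop:relation} and extract from it the precise relation $\Psi_*[\tilde\Delta_{k,k',\varphi}] = c_\varphi \cdot [\tilde\Delta_\varphi^{\HC}]$ with $c_\varphi \in \Q^\times$; (ii) note that $\Psi$ respects algebraic equivalence and base change, so descends to Griffiths groups over $K^{\ab}$; (iii) invoke Theorem \ref{intro:main:thm} and conclude by the rank inequality $\rank V \geq \rank \Psi(V) = \infty$.
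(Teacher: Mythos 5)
Your overall strategy --- transfer the infinite-rank statement of Theorem \ref{intro:main:thm} to the variants via Proposition \ref{prop:relation}, using that correspondences respect algebraic equivalence and base change, and then compare ranks --- is exactly the route the paper takes (its proof of the corollary is literally ``combine Theorem \ref{intro:main:thm} with Proposition \ref{prop:relation}''). However, as written your argument has a genuine gap: there is no single homomorphism $\Psi$ of the kind you posit. Proposition \ref{prop:relation} produces, for each $\varphi$, a correspondence $\Pi_{k,k',\varphi}$ that is built from the endomorphism $[d_\varphi\sqrt{-d_K}]$ of $A$, so for $r'\geq 1$ it genuinely depends on $\varphi$ through its degree $d_\varphi$. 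Since any fixed degree supports only finitely many isomorphism classes of isogenies, any infinite-rank phenomenon necessarily involves infinitely many degrees, hence infinitely many \emph{different} correspondences. Your rank comparison then breaks down: a hypothetical relation $\sum_i a_i[\tilde\Delta_{k,k',\varphi_i}]=0$ cannot be pushed forward ``term by term'' by the different maps $(\Pi_{k,k',\varphi_i})_*$, and applying a single $(\Pi_{k,k',\varphi_1})_*$ to the whole relation produces, for $i\neq 1$, classes of mixed cycles of the shape $\epsilon_{W_k}\bigl[(\Gamma_{[d_{\varphi_1}\sqrt{-d_K}]})^{r'}\times(\Gamma_{[d_{\varphi_i}\sqrt{-d_K}]})^{r-r'}\bigr]$ in the fibre over $P_{\varphi_i}$, which are not (without further argument) multiples of Heegner cycle classes in the Griffiths group. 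So the deduction cannot be made from the bare \emph{statement} of Theorem \ref{intro:main:thm} plus a $\varphi$-independent $\Psi$.

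The repair is to run your argument degree by degree, using the structure of the proof of Theorem \ref{intro:main:thm} (Theorem \ref{thm:gr}): for every prime $\ell$ it exhibits a pair $(p,q)$ and a family of Heegner cycles $\tilde\Delta^{\HC}_{p,q,\beta}$, $\beta\in\PP^1(\F_q)$, all attached to isogenies of the \emph{same} degree $pq$, whose classes span a subspace of $\Gr^{r+1}(W_{k,K^{\ab}})\otimes_{\Z}\Q$ of dimension at least $\ell-1$. For such a constant-degree family the correspondence of Proposition \ref{prop:relation} is one and the same map, and it sends each $\tilde\Delta_{k,k',\psi^t_{p,q,\beta}}$ to $(pq)^{r'}\tilde\Delta^{\HC}_{p,q,\beta}$ (up to the fixed normalising factors). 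Your rank inequality then applies to each family separately and shows the variants span a subspace of dimension at least $\ell-1$ for every admissible $\ell$, which gives the theorem. In short: the idea is right and matches the paper, but the single global $\Psi$ must be replaced by one correspondence per degree, and one must invoke the constant-degree witnesses inside the proof of Theorem \ref{intro:main:thm}, not merely its statement.
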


\begin{remark}
Theorem \ref{main} is a generalisation of the main result of \cite{bdlp}, which is valid under the same hypothesis assuming $k'=k$ (and $-d_K\neq -3,-4$), but without requiring $k\geq 2$ to be even. Theorems \ref{intro:main:thm} and \ref{main} recover results of Burungale \cite{burungale20} by a fundamentally different approach. The complex geometric method presented here is more direct, as it does not rely on any type of ($p$-adic) Gross--Zagier formula, which was instrumental in \cite{burungale20}.  
\end{remark}

\subsection{Strategy}

The method of proof of Theorem \ref{intro:main:thm} follows closely that of the proof of \cite[Theorem 2]{bdlp}, which itself is an adaptation of the original work and ideas of Schoen \cite{schoen}. We give a self-contained proof which does not assume familiarity with these prior works, offering along the way some additional details, simplifications, and minor fixes.
The method can be summarised as follows.
Analytic estimates of the integrals appearing in Theorem \ref{intro:thm:CAJ} imply that infinitely many Heegner cycles have either infinite or large order in the Griffiths group. A comparison argument with Bloch's \'etale variant of the Abel--Jacobi map defined on torsion cycles, together with fundamental properties of \'etale cohomology, allows us to deduce that in fact infinitely many Heegner cycles have infinite order in the Griffiths group. Finally, using knowledge from the theory of complex multiplication about the Galois action on these cycles enables us to prove that they generate a subgroup of infinite rank in the Griffiths group. 


\subsection{Outline}

In Section \ref{s:forms}, we recall the definition of Kuga--Sato varieties and define Scholl's projector, which cuts out spaces of cusp forms in the de Rham cohomology of these varieties. We define the product varieties $W_{k,H}\times_H A^{k'}$ for even integers $k\geq k'$, as well as correspondences relevant for the definition of the various cycles. In Section \ref{s:cycles}, we define the variants of generalised Heegner cycles following \cite{bdp3}. We exhibit certain correspondences from $W_{k,H}\times_H A^{k'}$ to $W_{k,H}$, and prove that they map variants of generalised Heegner cycles to rational multiples of Heegner cycles. In Section \ref{s:caj}, we recall the definition of the complex Abel--Jacobi map and prove Theorem \ref{intro:thm:CAJ} using the correspondences defined in the previous section with $k=k'$. The next sections are dedicated to proving Theorem \ref{intro:main:thm}. Section \ref{s:bloch} recalls basic properties of Bloch's \'etale variant of the Abel--Jacobi map on torsion cycles. Section \ref{s:finiteness} is devoted to proving a finiteness result for the \'etale cohomology groups of $W_{k,\bar{\Q}}$ with torsion coefficients related to the target of Bloch's map. In Section \ref{s:explicit}, we write down a collection of explicit isogenies that gives rise to a distinguished subcollection of Heegner cycles on which we will focus for the proof of Theorem \ref{intro:main:thm}. In Section \ref{s:asymptotics}, we use Theorem \ref{intro:thm:CAJ} to derive asymptotic information about the behaviour of Heegner cycles in our subcollection. Finally, in Section \ref{s:gr} we prove Theorem \ref{intro:main:thm} and deduce Theorem \ref{main}. 

\subsection{Notations and conventions}\label{s:convention}
All number fields in this article are viewed as embedded in a fixed algebraic closure $\bar{\Q}$ of $\Q$. Moreover, we fix a complex embedding $\bar{\Q}\hookrightarrow \C$, as well as a $p$-adic embedding $\bar{\Q}\hookrightarrow \C_p$ for each rational prime $p$. In this way, all finite extensions of $\Q$ are viewed simultaneously as subfields of $\C$ and $\C_p$. Throughout, the subscript $\Q$ on a group will denote the tensor product with $\Q$ over $\Z$. If $F$ is a field and $
X$ is a variety over a field contained in $F$, then $X_F$ will denote its base change. 
Given two varieties $X$ and $Y$ over a field $F$, we write $\corr^{r}(X, Y):=\CH^{\dim X+r}(X\times_F Y)$ for the group of correspondences of degree $r$.

\section{Cusp forms and Kuga--Sato varieties}\label{s:forms}

\subsection{Kuga--Sato varieties}\label{s:KS}

Let $k\geq 2$ and $N\geq 5$ be integers. Throughout, we will suppose that $k$ is even, and we let $k=2r$ with $r\geq 1$. The open modular curve $Y_1(N)$ over $\Q$ is the fine moduli space representing isomorphism classes of pairs consisting of an elliptic curve over a $\Q$-scheme together with a point of exact order $N$. It is a geometrically connected smooth affine curve over $\Q$. Let $Y_1(N)\hookrightarrow X_1(N)$ denote the canonical proper desingularisation of $Y_1(N)$ over $\Q$. As a Riemann surface over the complex numbers, it is obtained by adjoining the cusps. The modular curve $X_1(N)$ represents isomorphism classes of generalised elliptic curves with $\Gamma_1(N)$-level structure. Let $\pi: \cE\lra X_1(N)$ denote the universal generalised elliptic curve equipped with its canonical $\Gamma_1(N)$-level structure, and write $W_k$ for the canonical proper desingularisation of the $k$-fold self fibre product of $\cE$ over $\Q$ (see \cite[\S 3.0]{scholl} in the original case of full level $\Gamma(N)$, and \cite[Appendix]{bdp1} in the case of level $\Gamma_1(N)$, even over $\spec \Z[1/N]$). This is the $k$-th Kuga--Sato variety. It is smooth and proper over $\Q$ of dimension $k+1$, and has a natural fibration $\pi_k : W_k\lra X_1(N)$. The fibre over a non-cuspidal point $x$ representing the isomorphism class of an elliptic curve $E_x$ with $\Gamma_1(N)$-level structure is given by $\pi_k^{-1}(x)=E_x^k$. 

Scholl has constructed a projector $\epsilon_{W_k}$, which cuts out the space $S_{k+2}(\Gamma_1(N))$ of cusp forms of weight $k+2$ and level $\Gamma_1(N)$ inside the de Rham cohomology of the variety $W_k$ (see \cite[\S 1.1.2]{scholl} for the original construction of Scholl in full level, and \cite[(2.1.2)]{bdp1} for the case of level $\Gamma_1(N)$). We briefly recall its definition. Translation by the section of order $N$ of $\pi : \cE \lra X_1(N)$ given by the canonical $\Gamma_1(N)$-level structure gives rise to an action of $\Z/N\Z$ on $\cE$. Multiplication by $-1$ in the fibres of $\pi$ defines an action of $\mu_2$ on $\cE$. The symmetric group $\Sigma_k$ acts on $\cE^k$ by permuting the factors. There is therefore an action of the group 
\[
\Lambda_k:=(\Z/N\Z \rtimes \mu_2)^k \rtimes \Sigma_k
\]
on $\cE^k$. By the canonical nature of the desingularisation, this action extends to an action on the Kuga--Sato variety $W_k$ \cite[Theorem 3.1.0 (i)]{scholl}. Let $\chi_k : \Lambda_k \lra \{ \pm 1 \}$ be the character which is trivial on $(\Z/N\Z)^k$, the product character on $(\mu_2)^k$, and the sign character on $\Sigma_k$. The projector
\[ 
\epsilon_{W_k}:=\frac{1}{\vert \Lambda_k\vert} \sum_{g\in \Lambda_k} \chi_k(g)g \in \Z\left[\frac{1}{2N\cdot k !}\right][\Lambda_k] 
\]
is the one corresponding to the character $\chi_k$. We will view this projector as an idempotent element of $\corr^0(W_k, W_k)_\Q$ as follows. Given $g\in \Lambda_k$, denote by $\delta_g$ the induced automorphism of $W_k$, and let $\Gamma_{\delta_g}\subset W_k\times W_k$ denote its graph. By slight abuse of notation we define the idempotent correspondence
\begin{equation}\label{def:eps}
\epsilon_{W_k}:=\frac{1}{\vert \Lambda_k\vert} \sum_{g\in \Lambda_k} \chi_k(g)\Gamma_{\delta_g} \in \corr^0(W_k, W_k)_\Q.
\end{equation}
As such, $\epsilon_{W_k}$ acts on the various cohomology groups associated to $W_k$. The symmetry of the correspondence $\epsilon_{W_k}$ implies that the push-forward and pull-back maps it induces are equal, and we will denote any such map simply by $\epsilon_{W_k}$. 
 
\begin{proposition}\label{prop:epsiloncusp}
We have $\epsilon_{W_k} H_{\dR}^*(W_{k,\C})=\epsilon_{W_k} H_{\dR}^{k+1}(W_{k,\C}),$ and the Hodge filtration is given by $\Fil^{j} \epsilon_{W_k} H^{k+1}_{\dR}(W_{k,\C})=0$ for $j\geq k+2$, and 
\begin{equation*}
S_{k+2}(\Gamma_1(N))\simeq \Fil^{j} \epsilon_{W_k} H^{k+1}_{\dR}(W_{k,\C}),
\end{equation*}
for $1\leq j \leq k+1$,
via the association $f \mapsto \omega_f$ where $\omega_f(\C/\langle 1,\tau\rangle, 1/N):=f(\tau)(2\pi i dw)^{k}\otimes (2\pi i d\tau)$ for $\tau$ in the complex upper half-plane $\cH$ and $w$ the standard coordinate on the torus $\C/\langle 1,\tau\rangle$ with lattice $\langle 1,\tau\rangle:=\Z \oplus \Z\tau$.
\end{proposition}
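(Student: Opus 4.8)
The plan is to reduce this to the analogous computation of Scholl \cite{scholl} and of \cite{bdp1}, using only standard properties of the Künneth decomposition and of the Hodge filtration, together with the Leray spectral sequence for the fibration $\pi_k\colon W_k\to X_1(N)$. First I would recall that on $H^*_{\dR}(\cE^k_{/X_1(N)})$ — the relative de Rham cohomology of the open $k$-fold fibre product over $Y_1(N)$ — the Künneth formula gives a decomposition indexed by subsets/multidegrees, on which the group $\Lambda_k$ acts. A direct check on each Künneth component shows that the character $\chi_k$ of $\epsilon_{W_k}$ kills every component except the ``top'' one $\Sym^k$-type piece: translation by the order-$N$ section acts trivially on $H^0$ and $H^2$ of each elliptic fibre but nontrivially on the part of $H^1$ spanned by nontrivial eigenvectors, so $\epsilon_{W_k}$ retains only the full $H^1$ in each factor; multiplication by $-1$ then forces the sign in each $H^1$-factor; and the sign character on $\Sigma_k$ picks out the piece which, combined with the previous constraint, is forced to be $\Sym^k$ of the relative $H^1$ rather than the alternating part (the alternating part vanishes for degree reasons once one is on the $H^1\otimes\cdots\otimes H^1$ component). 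Consequently $\epsilon_{W_k}$ projects relative cohomology onto $\Sym^k \cH^1_{\dR}$, placed in relative degree $k$, which is why after taking global cohomology over the curve the whole thing is concentrated in total degree $k+1$: this gives $\epsilon_{W_k}H^*_{\dR}(W_{k,\C})=\epsilon_{W_k}H^{k+1}_{\dR}(W_{k,\C})$. For the passage from the open variety to $W_k$ one invokes Scholl's result that the boundary (cuspidal) contribution is annihilated by $\epsilon_{W_k}$, so that $\epsilon_{W_k}H^{k+1}_{\dR}(W_{k,\C})$ agrees with parabolic/interior cohomology.

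Next I would analyze the Hodge filtration. On the relative piece, $\Fil^1\cH^1_{\dR}=\uomega$, the Hodge bundle, so $\Fil^k\Sym^k\cH^1_{\dR}=\Sym^k\uomega=\uomega^{\otimes k}$, a line bundle, while $\Fil^j$ for $j>k$ vanishes and $\Fil^j$ for $0\le j\le k$ is everything (the filtration on $\Sym^k$ of a rank-two bundle with a one-step filtration only jumps at $j=0$ and... more precisely the jumps of $\Fil^\bullet\Sym^k\cH^1$ are at $0,1,\dots,k$, each a line bundle increment). Taking the Leray/Hodge-to-de Rham spectral sequence over the curve $X_1(N)$: the contribution to $\Fil^{k+1}H^{k+1}_{\dR}(W_k)$ of the Scholl piece is $H^0(X_1(N),\Fil^k\Sym^k\cH^1_{\dR}\otimes\Omega^1_{X_1(N)}(\log\mathrm{cusps}))=H^0(X_1(N),\uomega^{\otimes k}\otimes\Omega^1_{X_1(N)}(\mathrm{cusps}))$. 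By the Kodaira–Spencer isomorphism $\uomega^{\otimes 2}\simeq\Omega^1_{X_1(N)}(\mathrm{cusps})$, this is $H^0(X_1(N),\uomega^{\otimes(k+2)}(-\mathrm{cusps}))$ (the $\log$ poles being cancelled by the cuspidal vanishing forced by $\epsilon_{W_k}$), which is precisely $S_{k+2}(\Gamma_1(N))$ by the classical description of cusp forms as sections of $\uomega^{k+2}$ vanishing at the cusps. For $1\le j\le k+1$ one gets the same answer because the only Hodge piece of $\Sym^k\cH^1$ that can contribute a holomorphic form on the curve — after twisting by $\Omega^1$ and taking $H^0$ — is the top one $\uomega^{\otimes k}$; the lower graded pieces $\GR^i\Sym^k\cH^1=\uomega^{\otimes(2i-k)}$ for $i<k$ have negative enough degree after the Kodaira–Spencer twist that their $H^0$ vanishes, so no new contributions appear as $j$ decreases from $k+1$ down to $1$, and everything jumps at once. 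Finally I would verify the explicit normalisation: unwinding the identification $\uomega^{\otimes k}\otimes\Omega^1\simeq\uomega^{\otimes(k+2)}$ through the moduli description $(\C/\langle1,\tau\rangle,1/N)$ with $w$ the coordinate, the section $\uomega$ is $2\pi i\, dw$ (so that $q$-expansions come out integrally) and the Kodaira–Spencer image of $(2\pi i\, dw)^{\otimes2}$ is $2\pi i\, d\tau$, which yields exactly $\omega_f = f(\tau)(2\pi i\, dw)^k\otimes(2\pi i\, d\tau)$.

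The main obstacle, in my view, is the bookkeeping for \emph{why the filtration level does not matter}, i.e. the claim that $S_{k+2}(\Gamma_1(N))\simeq\Fil^j\epsilon_{W_k}H^{k+1}_{\dR}$ for the entire range $1\le j\le k+1$ rather than just $j=k+1$. This is a statement that the Hodge numbers $h^{p,q}$ of the Scholl piece are concentrated at the two extremes $(k+1,0)$ and $(0,k+1)$ — that is, $\epsilon_{W_k}H^{k+1}_{\dR}$ has Hodge type $\{(k+1,0),(0,k+1)\}$ only, with nothing in between. One proves this by the spectral sequence computation above: each intermediate graded piece $\GR^i\Sym^k\cH^1\otimes\Omega^1$ has $H^0=0$ and, dually, $H^1=0$ for $0<i<k$ by Serre duality and degree considerations, leaving only $i=0$ (contributing to $H^1$, Hodge type $(0,k+1)$) and $i=k$ (contributing to $H^0$, Hodge type $(k+1,0)$). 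The potential subtlety is degeneration of this spectral sequence and the interaction of the relative Hodge filtration with the Leray filtration — but both are handled by the general machinery (Deligne degeneration, Katz's analysis of Gauss–Manin), which is exactly what \cite[\S1.1.2]{scholl} and \cite[(2.1.2)]{bdp1} record; I would cite those and limit myself to the $\Gamma_1(N)$-specific verification of normalisations.
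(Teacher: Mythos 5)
Your overall framework (K\"unneth/Leray reduction to $H^1$ of the modular curve with coefficients in $\sym^k$ of the relative $H^1$, Kodaira--Spencer, and the identification with cusp forms) is the right one --- and in fact the paper offers no argument of its own here: its proof is a citation of \cite[Lemma 2.2 \& Corollary 2.3]{bdp1}. However, your treatment of the step you yourself single out as the main obstacle, namely why $\Fil^{j}$ is the same for all $1\leq j\leq k+1$, contains a genuine error. You claim that the intermediate graded pieces $\GR^{i}\sym^{k}\cH^1_{\dR}\otimes\Omega^1_{X_1(N)}$ for $0<i<k$ have vanishing $H^0$ (and, dually, $H^1$) ``by Serre duality and degree considerations''. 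This is false: since $\GR^{i}\sym^{k}\cH^1_{\dR}\simeq \uomega^{\otimes(2i-k)}$, the Kodaira--Spencer twist turns the relevant sheaf into $\uomega^{\otimes(2i-k+2)}$ (up to cuspidal divisors), whose global sections are modular forms of weight $2i-k+2$; already for $k=2$, $i=1$ this is $H^0(X_1(N),\Omega^1_{X_1(N)})$, the space of weight-two cusp forms, which is nonzero as soon as $X_1(N)$ has positive genus. The correct mechanism for the concentration in Hodge bidegrees $(k+1,0)$ and $(0,k+1)$ is not the vanishing of these cohomology groups but the acyclicity of the graded complexes: the $i$-th graded piece of the filtered de Rham complex is the two-term complex $[\GR^{i}\sym^{k}\cH^1 \to \GR^{i-1}\sym^{k}\cH^1\otimes\Omega^1(\log\mathrm{cusps})]$ with differential induced by the Gauss--Manin connection, and for $1\leq i\leq k$ this differential is a nonzero multiple of the Kodaira--Spencer isomorphism, so each such complex is quasi-isomorphic to zero; only the extreme pieces $i=0$ and $i=k+1$ survive, giving the $(0,k+1)$ part and $S_{k+2}(\Gamma_1(N))$ respectively. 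This is what \cite[Lemma 2.2]{bdp1} (following Scholl) actually proves, and your argument needs the degree count replaced by this computation.

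A secondary inaccuracy: translation by the order-$N$ section acts trivially on the cohomology of every smooth fibre (a translation of an elliptic curve is homotopic to the identity), so it is not what isolates $H^1$ in each factor --- the $(\mu_2)^k$ part of $\chi_k$ does that; the $(\Z/N\Z)^k$ averaging in Scholl's projector is needed only to kill the contributions of the degenerate fibres at the cusps, which you in any case dispose of by citing Scholl. Likewise, the sign character on $\Sigma_k$ selects $\sym^k$ of the relative $H^1$ because of the Koszul sign rule for odd-degree classes, not because ``the alternating part vanishes for degree reasons''. Neither of these affects the architecture of your proof, but the filtration step above is a real gap.
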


\begin{proof}
This is \cite[Lemma 2.2 \& Corollary 2.3]{bdp1} (which hold more generally over any field of characteristic zero).
\end{proof}

\subsection{Products of Kuga--Sato varieties with powers of CM elliptic curves}\label{s:products}

Fix an imaginary quadratic field $K$ with ring of integers $\cO_K$ and discriminant $-d_K$ coprime to $N$. 
Let $H$ be the Hilbert class field of $K$. By our conventions in Section \ref{s:convention}, a complex embedding $H\hookrightarrow \C$ is fixed. Let $A$ be an elliptic curve defined over $H$ with ring of endomorphisms $\End_H(A)$ isomorphic to $\oh_K$. Such an elliptic curve is said to have CM over $H$ by the maximal order $\oh_K$ of $K$. 

Let $k'$ be another even integer with $k'=2r'$ for some $r'\geq 0$. We will assume that $k\geq k'$.
Consider the variety $X_{k, k'}:=W_{k,H} \times_H A^{k'}$, which is smooth and proper over $H$ of dimension $k+k'+1$. It comes equipped with a natural fibration $\tilde{\pi}_{k, k'} : X_{k, k'} \lra X_1(N)$, whose fibre over a non-cuspidal point $x$ representing the isomorphism class of an elliptic curve $E_x$ with $\Gamma_1(N)$-level structure is given by $\tilde{\pi}_{k, k'}^{-1}(x)=E_x^k \times A^{k'}$. 

The group $\mu_2$ acts on $A$ by multiplication by $-1$, and the symmetric group $\Sigma_{k'}$ acts on $A^{k'}$ by permuting the factors. Hence $A^{k'}$ carries a natural action of the group $\Lambda'_{k'}:=(\mu_2)^{k'} \rtimes \Sigma_{k'}$. Let $\chi'_{k'} : \Lambda'_{k'}\lra \{ \pm 1 \}$ be the product character on $(\mu_2)^{k'}$ and the sign character on $\Sigma_{k'}$. Let 
$$
\epsilon_{A^{k'}}:=\frac{1}{\vert\Lambda'_{k'}\vert} \sum_{h\in \Lambda'_{k'}} \chi'_{k'}(h) h\in \Z\left[\frac{1}{2\cdot k !}\right][\Lambda'_{k'}] 
$$
be the projector associated with $\chi'_{k'}$. Given $h\in \Lambda'_k$, denote by $\delta'_h$ the induced automorphism of $A^{k'}$ and by $\Gamma_{\delta'_h}\subset A^{k'} \times A^{k'}$ its graph. As in the previous section, we denote by $\epsilon_{A^{k'}}$ the corresponding idempotent element of $\corr^0(A^{k'}, A^{k'})_{\Q}$ constructed using these graphs. 

We can now define the idempotent correspondence 
\begin{equation}\label{projector}
\epsilon_{X_{k, k'}}:=\epsilon_{W_k}\times \epsilon_{A^{k'}}:=\pr_{13}^*(\epsilon_{W_k})\cdot \pr_{24}^*(\epsilon_{A^{k'}})\in \corr^0(X_{k, k'}, X_{k, k'})_{\Q},
\end{equation}
where $\pr_{13} : X_{k, k'}^2\lra W_k^2$ and $\pr_{24}: X_{k, k'}^2\lra (A^{k'})^2$ are the natural projections. Explicitly, given $(g,h) \in \tilde{\Lambda}_{k,k'}:=\Lambda_k\times \Lambda'_{k'}$, denote by $\tilde{\delta}_{g,h}$ the automorphism $\delta_g \times \delta'_h$. Letting $\tilde{\chi}_{k,k'}$ denote the product character $\chi_k\times \chi'_{k'} : \tilde{\Lambda}_{k,k'}\lra \{\pm 1\}$, we have 
\[
\epsilon_{X_{k ,k'}}=\frac{1}{\vert \tilde{\Lambda}_{k,k'} \vert} \sum_{(g, h)\in \tilde{\Lambda}_{k,k'}} \tilde{\chi}_{k,k'}(g,h) \Gamma_{\tilde{\delta}_{g,h}}\in \corr^0(X_{k,k'}, X_{k,k'})_{\Q}.
\] 
Note the symmetry of this correspondence, in the sense that its induced push-forward and pull-back maps on Chow groups and cohomology groups are equal. We will therefore denote these simply by $\epsilon_{X_{k,k'}}$ by slight abuse of notation. 

\begin{notation}\label{notation}
In order to lighten the notation in the case $k'=k$, we will replace the subscript $k,k'$ simply by $k$, e.g., we will write $X_{k}$ for $X_{k,k}$. This convention will be adopted throughout the article.
\end{notation}


\begin{proposition}\label{eqn:filr-para-recall}
We have $\epsilon_{X_{k,k'}} H_{\dR}^*(X_{k,k',\C})=\epsilon_{X_{k,k'}} H_{\dR}^{k+k'+1}(X_{k,k',\C}),$ and the $(r+r'+1)$-th step of the Hodge filtration is identified with
\begin{equation}\label{iso:cuspX2}
S_{k+2}(\Gamma_1(N))\otimes \sym^{k'} H_{\dR}^1(A_\C) \simeq \Fil^{r+r'+1} \epsilon_{X_{k,k'}} H_{\dR}^{k+k'+1}(X_{k,k',\C}),
\end{equation}
via the assignment $f\otimes \alpha\mapsto \omega_f\wedge \alpha$ under the K\"unneth decomposition.
\end{proposition}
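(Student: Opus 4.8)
The plan is to deduce both assertions purely formally from the K\"unneth formula, Proposition \ref{prop:epsiloncusp}, and the entirely analogous computation of the $\epsilon_{A^{k'}}$-isotypic part of the cohomology of $A^{k'}$ already carried out in \cite{bdp1}. First I would observe that, since $\epsilon_{X_{k,k'}}=\epsilon_{W_k}\times\epsilon_{A^{k'}}=\pr_{13}^*(\epsilon_{W_k})\cdot\pr_{24}^*(\epsilon_{A^{k'}})$ is by construction the external product of the idempotent correspondences on the two factors, it is compatible with the K\"unneth decomposition of $H^*_{\dR}(X_{k,k',\C})$ (which is a filtered isomorphism respecting the gradings), acting as $\epsilon_{W_k}$ on the $W_{k,\C}$-tensor factor and as $\epsilon_{A^{k'}}$ on the $A^{k'}_\C$-tensor factor. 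Hence, as graded vector spaces equipped with the tensor-product Hodge filtration,
\[
\epsilon_{X_{k,k'}}H^*_{\dR}(X_{k,k',\C})=\bigl(\epsilon_{W_k}H^*_{\dR}(W_{k,\C})\bigr)\otimes\bigl(\epsilon_{A^{k'}}H^*_{\dR}(A^{k'}_\C)\bigr).
\]

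By Proposition \ref{prop:epsiloncusp} the first factor is concentrated in degree $k+1$, with $\Fil^jV=S_{k+2}(\Gamma_1(N))$ for $1\le j\le k+1$ and $\Fil^jV=0$ for $j\ge k+2$ (and $\Fil^0V=V$). For the second factor, under $H^*_{\dR}(A^{k'}_\C)=H^*_{\dR}(A_\C)^{\otimes k'}$ the product character on $(\mu_2)^{k'}$ singles out $(H^1_{\dR}(A_\C))^{\otimes k'}$ (as $-1$ acts trivially on $H^0$ and $H^2$ and by $-1$ on $H^1$ of an elliptic curve), placed in degree $k'$, and the sign character on $\Sigma_{k'}$ then cuts out $\sym^{k'}H^1_{\dR}(A_\C)$, the Koszul signs attached to the odd classes in $H^1$ converting the sign representation into the trivial one on the symmetric power; this is the computation of \cite{bdp1} mirroring Proposition \ref{prop:epsiloncusp}. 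Thus $\epsilon_{A^{k'}}H^*_{\dR}(A^{k'}_\C)=\sym^{k'}H^1_{\dR}(A_\C)$ is concentrated in degree $k'$, with $\Fil^j$ of dimension $k'-j+1$ for $0\le j\le k'$ and $\Fil^j=0$ for $j\ge k'+1$. Combining the two, $\epsilon_{X_{k,k'}}H^*_{\dR}(X_{k,k',\C})$ is concentrated in degree $(k+1)+k'=k+k'+1$, which is the first assertion.

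For the Hodge filtration, set $V:=\epsilon_{W_k}H^{k+1}_{\dR}(W_{k,\C})$ and $W:=\sym^{k'}H^1_{\dR}(A_\C)$, so that $\Fil^{r+r'+1}(V\otimes W)=\sum_{p+q=r+r'+1}\Fil^pV\otimes\Fil^qW$. This is exactly where the standing hypothesis $k\ge k'$, i.e.\ $r\ge r'$, enters: taking $p=k+1=2r+1$ forces $q=r'-r\le 0$, so $\Fil^qW=W$ and the summand $S_{k+2}(\Gamma_1(N))\otimes W$ occurs; any $p\ge k+2$ contributes $0$ because $\Fil^pV=0$; for $1\le p\le k+1$ one has $\Fil^pV=S_{k+2}(\Gamma_1(N))$, already accounted for; and for $p\le 0$ one has $q=r+r'+1-p\ge r+r'+1>2r'=k'$ (again by $r\ge r'$), so $\Fil^qW=0$. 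Therefore $\Fil^{r+r'+1}(V\otimes W)=S_{k+2}(\Gamma_1(N))\otimes\sym^{k'}H^1_{\dR}(A_\C)$, and unwinding the identifications --- Proposition \ref{prop:epsiloncusp} for the cusp-form factor and the K\"unneth product map $\beta\otimes\gamma\mapsto\beta\wedge\gamma$ --- gives the claimed isomorphism $f\otimes\alpha\mapsto\omega_f\wedge\alpha$.

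The whole argument is a bookkeeping exercise and I do not expect a genuine obstacle; the only point requiring care is the sign analysis that identifies $\epsilon_{A^{k'}}H^*_{\dR}(A^{k'}_\C)$ with $\sym^{k'}H^1_{\dR}(A_\C)$ rather than $\textstyle\bigwedge^{k'}H^1_{\dR}(A_\C)$ (which would vanish for $k'\ge 3$), but this is precisely the content of the corresponding statement in \cite{bdp1} and may be quoted directly. Everything else is the formal multiplicativity of K\"unneth and of the tensor-product filtration, together with the numerical inequality $r\ge r'$.
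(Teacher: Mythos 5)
Your argument is correct and follows essentially the same route as the paper: quote Proposition \ref{prop:epsiloncusp} for the $W_k$-factor, quote \cite{bdp1} (Lemma 1.8 there) for the identification $\epsilon_{A^{k'}}H^*_{\dR}(A^{k'}_\C)=\sym^{k'}H^1_{\dR}(A_\C)$, and combine via the K\"unneth decomposition. You merely make explicit the tensor-product filtration bookkeeping (where $r\ge r'$ is used), which the paper leaves implicit, and your computation there is accurate.
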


\begin{proof}
The case $k'=0$ is Proposition \ref{prop:epsiloncusp}.
The case $k'=k$ is \cite[Propositions 2.4 \& 2.5]{bdp1}. 
In general, we have $\epsilon_{A^{k'}} H_{\dR}^*(A^{k'}_\C)=\sym^{k'} H^1_{\dR}(A_\C)$ by \cite[Lemma 1.8]{bdp1}. The result then follows by Proposition \ref{prop:epsiloncusp} from the K\"unneth decomposition.
\end{proof}

\section{Algebraic cycles in CM fibres}\label{s:cycles}

Let $N\geq 5$ and $k=2r\geq k'=2r'\geq 0$ with $r\geq 1$. Fix an imaginary quadratic field $K$ satisfying the Heegner hypothesis with respect to $N$: every prime dividing $N$ splits in $K$. Let $A$ be an elliptic curve with CM by $\oh_K$ defined over the Hilbert class field $H$ of $K$.
By the Heegner hypothesis, there exists an ideal $\cN$ of $\oh_K$ such that $\oh_K/\cN = \Z/N\Z$. Fix such a choice of ideal $\cN$. Choose a generator $t$ of the cyclic group $A[\cN]$. Then the isomorphism class of $(A, t)$ is represented by a point $P_1$ in $Y_1(N)$ defined over the ray class field $K_{\cN}$ of $K$ of conductor $\cN$ by the theory of complex multiplication \cite[Theorem 11.39]{cox}. This point maps to the Heegner point $(A,A[\cN])$ in $Y_0(N)$ (as defined in \cite{grossX0}).

\subsection{Isogenies}\label{s:isog}

The algebraic cycles that we will consider on $X_{k, k'}=W_{k,H}\times_H A^{k'}$ are indexed by the set $\Isog^{\cN}(A)$ consisting of $\bar K$-isomorphism classes $(\varphi, A')$ of isogenies of elliptic curves $\varphi: A\lra A'$ defined over $\bar K$ whose kernels intersect $A[\cN]$ trivially. The Galois group $\Gal(\bar K / H)$ acts naturally on $\Isog^{\cN}(A)$, and $(\varphi, A')$ admits a representative defined over some field $H\subset F\subset \bar K$ if it is fixed by $\Gal(\bar K / F)$.

Any isogeny $\varphi : A \lra A'$ induces an isomorphism $K=\End(A)\otimes \Q\simeq \End(A')\otimes \Q$, and in particular the elliptic curve $A'$ has CM by some order $\oh_\varphi$ in $K$. Such orders are determined by their conductor $c_\varphi:=[\oh_K : \oh_\varphi]$. Given $c\in \mathbb{N}$, the unique order of conductor $c$ will be denoted by $\oh_c:=\Z+c\oh_K$, and we let $\Isog_{c}^{\cN}(A)$ denote the subset of $\Isog^{\cN}(A)$ consisting of those isomorphism classes $(\varphi, A')$ for which $A'$ has CM by $\oh_c$. 
By the theory of complex multiplication \cite[Theorem 11.1]{cox}, a representative of $(\varphi, A')\in \Isog_c^{\cN}(A)$ can be taken to be defined, along with its complex multiplication, over the ring class field $H_{c}$ of $K$ of conductor $c$.
We then always fix the isomorphism $\End_{H_c}(A')\simeq \oh_c$ by the convention that $[\alpha]^* \omega' = \alpha \omega'$ for any regular differential form $\omega'$ of $A'$, where $[\alpha]$ denotes the element $\alpha\in \oh_c$ viewed as an endomorphism of $A'$. Note that $\oh_{\varphi}=\oh_{c_\varphi}$ always contains the order $\oh_{d_{\varphi}}$, where $d_\varphi$ is the degree of $\varphi$. 

\subsection{Variants of generalised Heegner cycles}\label{s:HC}

Heegner cycles are certain algebraic cycles on $X_{k,0,\bar\Q}=W_{k,\bar\Q}$ of codimension $r+1$, while the generalised Heegner cycles of \cite[\S 2.3]{bdp1} are cycles on $X_k=X_{k, k, \bar\Q}$ of codimension $k+1$. Variants of generalised Heegner cycles, as introduced in \cite[\S 4.1]{bdp3}, are algebraic cycles on $X_{k,k', \bar\Q}$ of codimension $r+r'+1$. We will now recall their definition. When $k'=0$, this gives the definition of Heegner cycles, while the case $k'=k$ recovers the definition of generalised Heegner cycles.

Given $(\varphi, A')\in \Isog^{\cN}(A)$, the isomorphism class of the pair $(A', \varphi(t))$ is represented by a rational point $P_{\varphi}$ in $Y_1(N)$, which determines an embedding $\iota'_{\varphi}$ of $(A')^{k}$ in $W_k$ seen as the fibre $\pi_k^{-1}(P_\varphi)$. This in turn determines an embedding $\iota_{\varphi}=\iota_\varphi' \times \id_{A^{k'}}$ of $(A')^{k}\times A^{k'}$ in $X_{k,k'}$ seen as the fibre $\tilde{\pi}_{k, k'}^{-1}(P_{\varphi})$.
Consider the graph $\Gamma_{[d_{\varphi}\sqrt{-d_K}]}\subset A'\times A'$ of the endomorphism $[d_{\varphi}\sqrt{-d_K}]\in \End(A')$, as well as the graph $\Gamma_{\varphi}\subset A\times A'$ of $\varphi$. Define 
\[
\Upsilon_{k,k',\varphi}:= (\Gamma_{\varphi})^{k'} \times (\Gamma_{[d_{\varphi}\sqrt{-d_K}]})^{r-r'} \subset (A\times A')^{k'}\times (A'\times A')^{r-r'}=(A')^{k}\times A^{k'} \overset{\iota'_{\varphi}}{\subset} X_{k, k'}.
\]
Applying the projector \eqref{projector} gives rise to the variant of the generalised Heegner cycle associated to $\varphi$
\[
\Delta_{k,k',\varphi}:=\epsilon_{X_{k,k'}}\Upsilon_{k,k',\varphi} \in \CH^{r+r'+1}(X_{k,k', \bar \Q})_{0, \Q}
\]
in the Chow group of codimension $r+r'+1$ cycles on $X_{k,k', \bar \Q}$ with rational coefficients. The cycle $\Delta_{k,k',\varphi}$ is null-homologous since cycle class maps are functorial with respect to correspondences and $\epsilon_{X_{k,k'}}$ annihilates the target of the cycle class map by Proposition \ref{eqn:filr-para-recall}. That the cycles are defined over $\bar \Q$ (in fact over $K^{\ab}$) follows from the following:

\begin{proposition}\label{prop:fod}
Let $c\in \mathbb{N}$. If $(\varphi, A')\in \Isog_{c}^{\cN}(A)$, then the cycle $\Delta_{k,k',\varphi}$ is defined over the field compositum $F_{c} := K_{\cN}\cdot H_{c}\subset K^{\ab}\subset \bar \Q$.
\end{proposition}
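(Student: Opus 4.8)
The plan is to trace through the construction of $\Delta_{k,k',\varphi}$ and check that each ingredient is rational over $F_c = K_{\cN}\cdot H_c$. The cycle is $\epsilon_{X_{k,k'}}\Upsilon_{k,k',\varphi}$ where $\Upsilon_{k,k',\varphi} = (\Gamma_\varphi)^{k'}\times(\Gamma_{[d_\varphi\sqrt{-d_K}]})^{r-r'}$ is embedded in $X_{k,k'}$ via $\iota'_\varphi$, which is determined by the point $P_\varphi\in Y_1(N)$ classifying the pair $(A',\varphi(t))$. Since correspondences, graphs of morphisms, and fibre embeddings commute with base change, it suffices to show that $\varphi$, the endomorphism $[d_\varphi\sqrt{-d_K}]$ of $A'$, the level structure $\varphi(t)$, and the resulting point $P_\varphi$ are all defined over $F_c$.

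First I would recall from Section \ref{s:isog} that by the theory of complex multiplication (\cite[Theorem 11.1]{cox}) a representative of $(\varphi, A')\in\Isog_c^{\cN}(A)$ can be chosen so that $A'$ \emph{and} its complex multiplication by $\oh_c$ are defined over the ring class field $H_c$; in particular the endomorphism $[d_\varphi\sqrt{-d_K}]$ (note $d_\varphi\sqrt{-d_K}\in\oh_{d_\varphi}\subseteq\oh_c$) is defined over $H_c$, hence over $F_c$. The isogeny $\varphi\colon A\to A'$ may then be taken to be defined over $H_c$ as well (it is determined up to $\oh_c^\times$ by its kernel, a finite subgroup scheme, and one can spread it out over the compositum; since $A$ is defined over $H\subseteq H_c$ this causes no trouble). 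The genuinely new point compared to \cite{bdlp} is keeping track of the level structure: the point $P_1\in Y_1(N)$ representing $(A,t)$ is defined over $K_{\cN}$ by \cite[Theorem 11.39]{cox}, so $t\in A[\cN]$ is rational over $K_{\cN}$, and therefore $\varphi(t)\in A'[\cN]$ is rational over the compositum $K_{\cN}\cdot H_c = F_c$. Hence the pair $(A',\varphi(t))$, and with it the point $P_\varphi\in Y_1(N)$ and the embedding $\iota'_\varphi$ of $(A')^k$ as the fibre $\pi_k^{-1}(P_\varphi)$, is defined over $F_c$.

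With all the geometric data over $F_c$, the graph $\Gamma_\varphi\subset A\times A'$ and the graph $\Gamma_{[d_\varphi\sqrt{-d_K}]}\subset A'\times A'$ are $F_c$-rational subvarieties, so $\Upsilon_{k,k',\varphi}$ is an $F_c$-rational cycle on $(A')^k\times A^{k'}\cong \tilde\pi_{k,k'}^{-1}(P_\varphi)\subset X_{k,k',F_c}$. Finally, the projector $\epsilon_{X_{k,k'}}$ is given by \eqref{projector} as a $\Q$-linear combination of graphs of automorphisms $\tilde\delta_{g,h}$ of $X_{k,k'}$; these automorphisms come from the $\Z/N\Z$-action, the sign actions, and the symmetric group actions, all of which are defined over $\Q$ (the section of order $N$ on $\cE$ is the universal one, defined over $\Q$, and likewise for $A^{k'}$ over $H$). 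Therefore $\epsilon_{X_{k,k'}}$ acts on $\CH^*(X_{k,k',F_c})_\Q$, and $\Delta_{k,k',\varphi}=\epsilon_{X_{k,k'}}\Upsilon_{k,k',\varphi}$ lies in $\CH^{r+r'+1}(X_{k,k',F_c})_{0,\Q}$, giving the claim.

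I expect the main obstacle to be purely expository rather than mathematical: being careful that a \emph{single} representative $(\varphi, A')$ can be chosen over $F_c$ so that simultaneously $A'$, its CM by $\oh_c$ (needed for $[d_\varphi\sqrt{-d_K}]$), the isogeny $\varphi$ itself, and the level structure $\varphi(t)$ are all $F_c$-rational — the CM-theory inputs \cite[Theorems 11.1, 11.39]{cox} give the field of definition of $A'$ with its CM and of the level structure on $A$ separately, and one must observe that passing to the compositum $K_{\cN}\cdot H_c$ accommodates both and that $\varphi$ (being pinned down by its kernel) descends there too. Everything after that is formal functoriality of graphs, fibres, and correspondences under base change, which I would state without detailed verification.
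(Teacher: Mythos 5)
Your overall strategy---choose a representative of $(\varphi,A')$ with its CM over $H_c$ via \cite[Theorem 11.1]{cox} (so that $\varphi$ and $[d_\varphi\sqrt{-d_K}]$ are $H_c$-rational), handle the level structure via \cite[Theorem 11.39]{cox}, and observe that $\epsilon_{X_{k,k'}}$ is defined over $\Q$---is the same as the paper's. However, there is a genuine error at exactly the step you single out as the new one: from the $K_{\cN}$-rationality of the moduli point $P_1$ you infer that ``$t\in A[\cN]$ is rational over $K_{\cN}$,'' and hence that $\varphi(t)$ is $F_c$-rational and that the pair $(A',\varphi(t))$ is defined over $F_c$. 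That inference is invalid, and the intermediate claim is false in general: the point $t$ is rational only over $H(A[\cN])$, whereas $K_{\cN}=H(h_A(A[\cN]))$ is generated by the values of the Weber function $h_A : A \to A/\Aut(A)=A/\oh_K^\times$, i.e.\ by the $\cN$-torsion only up to units; the extension $H(A[\cN])/K_{\cN}$ is typically nontrivial (for $\oh_K^\times=\{\pm 1\}$ it is generically quadratic, with Galois acting by $t\mapsto -t$). Rationality of a point of the fine moduli space $Y_1(N)$ over $K_{\cN}$ only says that $(A,t^\sigma)\cong (A,t)$ for $\sigma$ fixing $K_{\cN}$, i.e.\ $t^\sigma=ut$ for some $u\in\oh_K^\times$; it does not give $t^\sigma=t$.

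The gap is localized and repairable, and the repair is precisely what the paper does: argue with isomorphism classes rather than with the torsion point itself. For $\sigma\in\Gal(\bar\Q/F_c)$ one has $A'^\sigma=A'$, $\varphi^\sigma=\varphi$, $[d_\varphi\sqrt{-d_K}]^\sigma=[d_\varphi\sqrt{-d_K}]$ (all defined over $H_c$), while $\varphi(t)^\sigma=\varphi(ut)$ with $u\in\oh_K^\times$, so $(A',\varphi(t))^\sigma$ is isomorphic to $(A',\varphi(t))$; hence the moduli point $P_\varphi$, the fibre $\tilde{\pi}_{k,k'}^{-1}(P_\varphi)$, and therefore the cycle are fixed by $\sigma$. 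One never needs, and in general does not have, $F_c$-rationality of $\varphi(t)$ itself. With that single step replaced, the remainder of your argument (graphs, fibre embeddings, and correspondences commuting with base change, and $\epsilon_{X_{k,k'}}$ being defined over $\Q$) coincides with the paper's proof.
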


\begin{proof}
The elliptic curve $A$ was chosen to be defined over the Hilbert class field $H$ of $K$ (possible since $H=K(j(A))$ by \cite[Theorem 11.1]{cox}). Similarly, we may choose a representative of the isomorphism class $(\varphi, A')\in \Isog_c^{\cN}(A)$ that is defined over $H_{c}$ (again by \cite[Theorem 11.1]{cox}). The fixed $\Gamma_1(N)$-level structure $t\in A[\cN]$ is defined over the extension $H(A[\cN])$ obtained by adjoining the coordinates of the $\cN$-torsion points. This extension is abelian over $H$, but not necessarily over $K$. However, the isomorphism class of $(A,t)$ as an elliptic curve with $\Gamma_1(N)$-level structure (i.e., the point $P_1$ of $Y_1(N)$) is defined over the ray class field $K_{\cN}=H(h_A(A[\cN]))$, where $h_A : A\lra A/\Aut(A)=A/\oh_K^\times$ is the Weber function \cite[Theorem 11.39]{cox}. It follows that the isomorphism class of $(A', \varphi(t))$ (i.e., the point $P_\varphi$ of $Y_1(N)$) is defined over $F_{c}$, hence so is the embedding $\iota'_{\varphi}$. Finally, $\End_{H_{c}}(A')\simeq \oh_c$ and thus $[d_\varphi\sqrt{-d_K}]$ is defined over $H_{c}$. This shows that $\Upsilon_{k,k',\varphi}$ is defined over $F_c$. Since $\epsilon_{k,k'}$ is defined over $\Q$, the result follows.
\end{proof}

\begin{definition}\label{rem:hc}
Let $c\in \mathbb{N}$ and $(\varphi, A')\in \Isog^{\cN}_c(A)$.
\begin{itemize}
\item When $k'=0$ in the above construction, we write $\Upsilon_{\varphi}^{\HC}:=\Upsilon_{k,0,\varphi}$ and 
\[
\Delta_{\varphi}^{\HC}:=\Delta_{k,0, \varphi}=\epsilon_{W_k}\Upsilon_{\varphi}^{\HC}\in \CH^{r+1}(W_{k, F_{c}})_{0, \Q}.
\]
This is the Heegner cycle associated to $\varphi$ and studied for instance in \cite[\S 5]{NekovarKS}.
\item When $k'=k$ in the above construction, we write $\Upsilon_{\varphi}^{\GHC}:=\Upsilon_{k,k,\varphi}=\Upsilon_{k,\varphi}$ and 
\[
\Delta_{\varphi}^{\GHC}:=\Delta_{k,k, \varphi}=\Delta_{k, \varphi}=\epsilon_{X_k}\Upsilon_{\varphi}^{\GHC}\in \CH^{k+1}(X_{k,F_{c}})_{0, \Q}.
\]
This is the generalised Heegner cycle associated to $\varphi$ first introduced in \cite[\S 2]{bdp1}.
\end{itemize}
\end{definition}

The cycles introduced so far are elements of Chow groups with {\em rational coefficients}.
In order to meaningfully consider their images under Abel--Jacobi maps and discuss their torsion or non-torsion properties, we clear denominators by multiplying the cycles by suitable integers. 
\begin{definition}\label{rem:intcycles}
Let $m_{k,k'}:=\vert \tilde{\Lambda}_{k,k'} \vert = (2N)^k k! 2^{k'} (k')!$. Define the correspondence $$\tilde{\epsilon}_{X_{k,k'}}:=m_{k,k'}\epsilon_{X_{k,k'}}\in \corr^0(X_{k,k'}, X_{k,k'}),$$ and for all $(\varphi, A')\in \Isog^{\cN}(A)$ define the cycles
\[
\tilde{\Delta}_{k,k',\varphi}:=m_{k,k'}\Delta_{k,k',\varphi}=\tilde\epsilon_{X_{k,k'}}\Upsilon_{k,k',\varphi}\in  \CH^{r+r'+1}(X_{k,k', \bar \Q})_{0}.
\]
These cycles have {\em integral coefficients} and inherit the properties described in Proposition \ref{prop:fod}. 
\end{definition}



\subsection{Relation with Heegner cycles}\label{s:relation}

In \cite[\S 2.4]{bdp1}, Bertolini, Darmon, and Prasanna exhibited a correspondence from $X_{k}$ to $W_k$ mapping generalised Heegner cycles to multiples of Heegner cycles. The details of this calculation were left to the reader. A more general setup was worked out by the same authors in \cite[Proposition 4.1.1]{bdp3}. They exhibited a correspondence from $X_k$ to $X_{k,k'}$ mapping generalised Heegner cycles to multiples of the cycles $\Delta_{k,k', \varphi}$. This was done for specific isogenies between elliptic curves both having CM by $\oh_K$. In this section, we exhibit a correspondence from $X_{k,k'}$ to $W_k$, which maps $\Delta_{k,k', \varphi}$ to an integer multiple of $\Delta_\varphi^{\HC}$ for all $(\varphi, A')\in \Isog^{\cN}(A)$.

Consider the variety $W_{k,H}\times_H A^{r'}$ embedded into $Z_{k,k'}:=X_{k,k'}\times_H W_{k,H}=W_{k,H}\times_H A^{k'} \times_H W_{k,H}$ via the map $\Psi_{k,k',\varphi}:=(\id_{W_k}, (\id_A, [d_\varphi \sqrt{-d_K}])^{r'}, \id_{W_k})$. Denote its image by $\Pi_{k,k',\varphi}$. This is a $k+r'+1$ dimensional subvariety of the variety $Z_{k,k'}$ of dimension $2k+k'+2$. Its class modulo rational equivalence therefore gives rise to a correspondence 
\begin{equation}\label{Pikk}
\Pi_{k,k',\varphi} \in \CH^{k+k'+1-r'}(X_{k,k'}\times_H W_{k,H})=\corr^{-r'}(X_{k,k'}, W_{k,H})
\end{equation}
defined over $H$. 
This in turn induces push-forward and pull-back maps on Chow groups and cohomology groups in the usual way. In particular, it induces via push-forward a map 
\[
(\Pi_{k,k',\varphi})_* : \CH^{r+r'+1}(X_{k,k', \bar \Q})_{0,\Q} \lra \CH^{r+1}(W_{k,\bar \Q})_{0,\Q}. 
\]
We will use the following notations for the various natural projection maps:
\begin{equation}\label{diag1}
\begin{tikzcd}
 Z_{k,k'} 
 & =
 & X_{k,k'} \times_H W_{k,H} \arrow{dl}[swap]{\pi_{01}} \arrow{dr}{\pi_2}
 & =
 & W_{k,H} \times_H A^{k'} \times_H W_{k,H} \arrow{dl}[swap]{\pi_0} \arrow{d}{\pi_1} \arrow{dr}{\pi_2}
 & \\
 & X_{k,k'} 
 &
 & W_{k,H}
 & A^{k'} 
 & W_{k,H}.
\end{tikzcd}
\end{equation}

\begin{proposition}\label{prop:relation}
Let $(\varphi, A')\in \Isog^{\cN}(A)$ of degree $d_\varphi$. Then 
\[
(\Pi_{k,k',\varphi})_*(\Upsilon_{k,k',\varphi})=d_\varphi^{r'} \Upsilon_{\varphi}^{\HC} \quad \text{ and } \quad (\Pi_{k,k',\varphi})_*(\Delta_{k,k',\varphi})=d_\varphi^{r'} \Delta_{\varphi}^{\HC}.
\]
\end{proposition}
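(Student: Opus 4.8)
The plan is to verify the two claimed identities directly from the definitions of the cycles and of the correspondence $\Pi_{k,k',\varphi}$, the second being a formal consequence of the first. First I would reduce everything to an explicit computation on the fibre. Recall that $\Upsilon_{k,k',\varphi}$ lives inside the fibre $\iota'_\varphi\big((A')^k\times A^{k'}\big)$ of $\tilde\pi_{k,k'}$ over the point $P_\varphi\in Y_1(N)$, and $\Upsilon_\varphi^{\HC}=\Upsilon_{k,0,\varphi}$ lives inside $\iota'_\varphi\big((A')^k\big)$, the fibre of $\pi_k$ over the \emph{same} point $P_\varphi$. The map $\Psi_{k,k',\varphi}=(\id_{W_k},(\id_A,[d_\varphi\sqrt{-d_K}])^{r'},\id_{W_k})$ embeds $W_{k,H}\times_H A^{r'}$ into $Z_{k,k'}=X_{k,k'}\times_H W_{k,H}$, and $\Pi_{k,k',\varphi}$ is the class of its image. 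Computing $(\Pi_{k,k',\varphi})_*$ means: pull the cycle $\Upsilon_{k,k',\varphi}$ back along $\pi_{01}$ to $Z_{k,k'}$, intersect with $\Pi_{k,k',\varphi}$, and push forward along $\pi_2$ to $W_{k,H}$.

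The key step is to identify this intersection. Inside the common fibre direction, $\Upsilon_{k,k',\varphi}=(\Gamma_\varphi)^{k'}\times(\Gamma_{[d_\varphi\sqrt{-d_K}]})^{r-r'}\subset (A\times A')^{k'}\times (A'\times A')^{r-r'}$. The $A^{k'}$-coordinates of a point of $\Upsilon_{k,k',\varphi}$ are thus the source coordinates of the $k'$ copies of $\Gamma_\varphi$; write them as $(a_1,\dots,a_{k'})$ with $k'=2r'$, so they naturally group into $r'$ pairs $(a_{2i-1},a_{2i})$. The subvariety $\Pi_{k,k',\varphi}$ imposes, on these $A^{k'}=A^{2r'}$ coordinates, exactly the $r'$ conditions $a_{2i}=[d_\varphi\sqrt{-d_K}]\,a_{2i-1}$, i.e. it is $W_{k,H}\times_H\Gamma_{[d_\varphi\sqrt{-d_K}]}^{r'}\times_H W_{k,H}$ read through the identification. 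I would check that $\Upsilon_{k,k',\varphi}$ meets this transversally (or more precisely compute the intersection multiplicity): the $\pi_{01}^*\Upsilon_{k,k',\varphi}\cap \Pi_{k,k',\varphi}$ is supported on the locus where, in addition to $\Upsilon_{k,k',\varphi}$'s own defining equations, one has $a_{2i}=[d_\varphi\sqrt{-d_K}]a_{2i-1}$; since on $\Gamma_\varphi$ one has $a_j=\varphi^{-1}$ applied to the target coordinate (set-theoretically) and $\varphi$ is an isogeny of degree $d_\varphi$, solving $\varphi(a_{2i})=\varphi([d_\varphi\sqrt{-d_K}]a_{2i-1})$ for fixed target data contributes a factor of $d_\varphi$ per pair — exactly $r'$ factors — accounting for the coefficient $d_\varphi^{r'}$. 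Projecting along $\pi_2$ to $W_{k,H}$ then forgets the $A$-coordinates entirely and records the image in the fibre $\pi_k^{-1}(P_\varphi)$, which is precisely $(\Gamma_{[d_\varphi\sqrt{-d_K}]})^{r-r'}\times(\text{remaining }A'\text{-data from }\Gamma_\varphi^{k'})$. Here I must be careful: the $A'$-coordinates of the $k'$ copies of $\Gamma_\varphi$, under $\pi_2$, land in $k'=2r'$ of the $k=2r$ slots of $(A')^k$; together with the $(r-r')$ copies of $\Gamma_{[d_\varphi\sqrt{-d_K}]}$ occupying the other $2(r-r')$ slots, one recovers $(\Gamma_{[d_\varphi\sqrt{-d_K}]})^{r}\subset (A')^{2r}=(A')^k$, which is exactly $\Upsilon_\varphi^{\HC}$. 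So $(\Pi_{k,k',\varphi})_*\Upsilon_{k,k',\varphi}=d_\varphi^{r'}\Upsilon_\varphi^{\HC}$.

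For the second identity, I would invoke compatibility of push-forward along correspondences with composition: $(\Pi_{k,k',\varphi})_*\circ \epsilon_{X_{k,k'}}$ and $\epsilon_{W_k}\circ(\Pi_{k,k',\varphi})_*$ should agree as maps on Chow groups, because the group $\tilde\Lambda_{k,k'}=\Lambda_k\times\Lambda'_{k'}$ acts compatibly through $\Psi_{k,k',\varphi}$ — the $\Lambda_k$-action on the $W_k$-factor is transported identically, and the $\Lambda'_{k'}=(\mu_2)^{k'}\rtimes\Sigma_{k'}$-action on $A^{k'}$ intertwines with the $\mu_2^{k'}\rtimes\Sigma_{k'}$-part of $\Lambda_k$ acting on the corresponding $2r'$ slots of $(A')^k$, once one observes $[-1]\circ[d_\varphi\sqrt{-d_K}]=[d_\varphi\sqrt{-d_K}]\circ[-1]$ and $[d_\varphi\sqrt{-d_K}]$ commutes with permutations of pairs. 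Granting this, $(\Pi_{k,k',\varphi})_*\Delta_{k,k',\varphi}=(\Pi_{k,k',\varphi})_*\epsilon_{X_{k,k'}}\Upsilon_{k,k',\varphi}=\epsilon_{W_k}(\Pi_{k,k',\varphi})_*\Upsilon_{k,k',\varphi}=d_\varphi^{r'}\epsilon_{W_k}\Upsilon_\varphi^{\HC}=d_\varphi^{r'}\Delta_\varphi^{\HC}$.

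The main obstacle I anticipate is the intersection-multiplicity bookkeeping in the middle paragraph: one must pin down exactly which coordinates of $\Gamma_\varphi$ are "source" versus "target", how $\Psi_{k,k',\varphi}$ aligns the $r'$ pairs of $A^{k'}$-slots with the graphs $\Gamma_{[d_\varphi\sqrt{-d_K}]}$, and then argue that the scheme-theoretic intersection is $d_\varphi^{r'}$ copies (with multiplicity one each, by separability of $\varphi$ in characteristic zero) of the transported cycle $\Upsilon_\varphi^{\HC}$ rather than something of the wrong dimension or with spurious components. Keeping track of the projector-commutation in the last paragraph is routine by comparison, but writing the group-action intertwining cleanly requires care about the embedding of $\mu_2^{2r'}\rtimes\Sigma_{r'}$-type symmetries; I would state it as a lemma about $\Psi_{k,k',\varphi}$ being equivariant for the relevant subgroups and reduce the projector identity to that.
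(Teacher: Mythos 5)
Your treatment of the first identity follows the same route as the paper: pull back along $\pi_{01}$, intersect with $\Pi_{k,k',\varphi}$, push forward along $\pi_2$, with the factor $d_\varphi^{r'}$ coming from counting $\varphi$-preimages. One small imprecision: the scheme-theoretic intersection upstairs is a single irreducible variety (isomorphic to $A^{r'}\times (A')^{r-r'}$), not ``$d_\varphi^{r'}$ copies'' of $\Upsilon_\varphi^{\HC}$; the factor $d_\varphi^{r'}$ arises as the degree of $\pi_2$ restricted to this intersection onto its image $\iota'_\varphi\bigl((\Gamma_{[d_\varphi\sqrt{-d_K}]})^r\bigr)$. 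That is a rephrasing of the same count and is harmless.

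The second step, however, has a genuine gap. You assert that $(\Pi_{k,k',\varphi})_*\circ\epsilon_{X_{k,k'}}=\epsilon_{W_k}\circ(\Pi_{k,k',\varphi})_*$ as maps on Chow groups, justified by an equivariance of $\Psi_{k,k',\varphi}$ under $\tilde\Lambda_{k,k'}=\Lambda_k\times\Lambda'_{k'}$. The $\Lambda_k$-half does commute with $\Pi_{k,k',\varphi}$ (this is \eqref{commute} in the paper), but the $\Lambda'_{k'}$-half does not: an element $h\in(\mu_2)^{k'}\rtimes\Sigma_{k'}$ preserves the locus $\{(t_1,[d_\varphi\sqrt{-d_K}]t_1,\dots,t_{r'},[d_\varphi\sqrt{-d_K}]t_{r'})\}\subset A^{k'}$ only if it permutes the $r'$ pairs and flips signs pair-by-pair; a transposition inside a pair carries it to the transposed graph (and $[d_\varphi\sqrt{-d_K}]$ is not an involution), while a single sign flip moves it off the locus entirely. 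So $\Pi_{k,k',\varphi}\circ\Gamma_{\tilde\delta_{1,h}}$ is not of the form $\Gamma_{\delta_{g}}\circ\Pi_{k,k',\varphi}$ for general $h$, the asserted commutation is false at the level of correspondences, and it is unjustified (and there is no reason to expect it) as an identity of maps on all of $\CH^{r+r'+1}(X_{k,k'})$. What is true, and is all the proposition needs, is the equality of the two sides evaluated on the specific cycle $\Upsilon_{k,k',\varphi}$, i.e.\ \eqref{eq:epsiloncommutes}. The paper proves this using an extra input your argument does not supply: $\Upsilon_{k,k',\varphi}$ is invariant under the twisted diagonal subgroup $\{\tilde\delta_{\alpha(h),h}:h\in\Lambda'_{k'}\}$, where $\alpha(h)\in\Lambda_k$ acts on the first $k'$ slots of the fibre $(A')^k$ by the same signs and permutation as $h$; combining this invariance with \eqref{commute}, the character compatibility $\chi_k\circ\alpha=\chi'_{k'}$, and an averaging over cosets of $(\Lambda'_{k'})^2$ in $\tilde\Lambda_{k,k'}$ (following \cite[Proposition 4.1.1]{bdp3}) yields \eqref{eq:epsiloncommutes}. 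Without this (or an equivalent argument exploiting the special shape of $\Upsilon_{k,k',\varphi}$), the deduction $(\Pi_{k,k',\varphi})_*(\Delta_{k,k',\varphi})=d_\varphi^{r'}\Delta_\varphi^{\HC}$ is not established.
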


\begin{proof}
By definition of the push-forward map, we have 
$$(\Pi_{k,k',\varphi})_*(\Upsilon_{k,k',\varphi})=(\pi_2)_*(\Pi_{k,k',\varphi} \cdot (\pi_{01})^*(\Upsilon_{k,k',\varphi})),$$ 
where $\cdot$ denotes the intersection product in the Chow ring of $Z_{k,k'}$. Note that $(\pi_{01})^*(\Upsilon_{k,k',\varphi})$ is described by
\[
\{ (\iota'_\varphi((\varphi(x_i))_{i=1}^{k'}, (y_i, [d_\varphi\sqrt{-d_K}](y_i))_{i=1}^{r-r'}), (x_i)_{i=1}^{k'}, z) \: \vert \: (x_i)_{i=1}^{k'} \in A^{k'}, (y_i)_{i=1}^{r-r'}\in A^{r'}, z\in W_{k}  \}, 
\]
and by definition we have 
\[
\Pi_{k,k',\varphi} = \{ (s, (t_1, [d_{\varphi}\sqrt{-d_K}](t_1), \ldots, t_{r'}, [d_{\varphi}\sqrt{-d_K}](t_{r'})), s) \: \vert \: s\in W_{k}, (t_i)_{i=1}^{r'} \in A^{r'}  \}.
\]
The ambient variety $Z_{k,k'}$ is smooth of dimension $2k+k'+2$, and the two subvarieties $(\pi_{01})^*(\Upsilon_{k,k',\varphi})$ and $\Pi_{k,k',\varphi}$ have respective codimensions $r+r'+1$ and $k+r'+1$. Set theoretically we see that they intersect in a subvariety of dimension $r$. In particular, they are dimensionally transverse. By \cite[Proposition 1.28]{EisHar}, it follows that $(\pi_{01})^*(\Upsilon_{k,k',\varphi})$ and $\Pi_{k,k',\varphi}$ intersect generically transversely. By \cite[Theorem 1.26 (b)]{EisHar}, we deduce that $\Pi_{k,k',\varphi}\cdot (\pi_{01})^*(\Upsilon_{k,k',\varphi}) = \Pi_{k,k',\varphi} \cap (\pi_{01})^*(\Upsilon_{k,k',\varphi})$ in the Chow group, where $\cap$ denotes the set theoretic intersection.
Using this, we obtain that  $(\Pi_{k,k',\varphi})_*(\Upsilon_{k,k',\varphi})$ is given by
\[
d_{\varphi}^{r'} \{  \iota'_{\varphi}((\varphi(t_i), \varphi([d_{\varphi}\sqrt{-d_K}](t_i)))_{i=1}^{r'}, (y_i, [d_\varphi\sqrt{-d_K}](y_i))_{i=1}^{r-r'}) \: \vert \: (t_i)_{i=1}^{r'} \in A^{r'}, (y_i)_{i=1}^{r-r'}\in A^{r'} \},
\]
where the appearance of the degree of $\varphi$ stems from the push-forward by $\pi_2$. Observing that $\varphi \circ [d_{\varphi}\sqrt{-d_K}]=[d_{\varphi}\sqrt{-d_K}]\circ \varphi$ leads to $(\Pi_{k,k',\varphi})_*(\Upsilon_{k,k',\varphi})$ being equal to
\[
 d_{\varphi}^{r'} \{  \iota'_{\varphi}((\varphi(t_i), [d_{\varphi}\sqrt{-d_K}](\varphi(t_i)))_{i=1}^{r'}, (y_i, [d_\varphi\sqrt{-d_K}](y_i))_{i=1}^{r-r'}) \: \vert \: (t_i)_{i=1}^{r'} \in A^{r'}, (y_i)_{i=1}^{r-r'}\in A^{r'} \}.
\]
The latter is $d_{\varphi}^{r'}\Upsilon_{k, 0,\varphi}$ since $\varphi$ is surjective. We have proved that $(\Pi_{k,k',\varphi})_*(\Upsilon_{k,k',\varphi})=d_\varphi^{r'} \Upsilon_{\varphi}^{\HC}$.

In order to prove the equality $(\Pi_{k,k',\varphi})_*(\Delta_{k,k',\varphi})=d_\varphi^{r'} \Delta_{\varphi}^{\HC}$, it suffices to prove that 
\begin{equation}\label{eq:epsiloncommutes}
(\Pi_{k,k',\varphi} \circ \epsilon_{X_{k,k'}})_*(\Upsilon_{k,k',\varphi})=(\epsilon_{W_k}\circ \Pi_{k,k',\varphi})_*(\Upsilon_{k,k',\varphi})
\end{equation}
in $\CH^{r+1}(W_{k,\bar \Q})_{\Q}$, where $\circ$ denotes composition of correspondences. 
Given $g\in \Lambda_{k}$, 
we observe by direct calculation that 
\begin{equation}\label{commute}
\Pi_{k,k',\varphi} \circ \Gamma_{\tilde{\delta}_{g, 1}}=\Gamma_{\delta_g} \circ \Pi_{k,k',\varphi}.
\end{equation}
Define a map $\alpha : \Lambda_{k'}'\lra \Lambda'_k$ as follows. Given $((\mu_1, \ldots, \mu_{k'}), \sigma)\in \Lambda_{k'}'=(\mu_2)^{k'}\rtimes \Sigma_{k'}$, define
\[
\alpha((\mu_1, \ldots, \mu_{k'}), \sigma)=((\mu_1, \ldots, \mu_{k'}), (1, \ldots, 1), \sigma)\in \Lambda'_k=((\mu_2)^{k'}\times (\mu_2)^{k-k'})\rtimes \Sigma_k,
\]
where the natural inclusion $\Sigma_{k'}\subset \Sigma_k$ is obtained by permuting the first $k'$ factors. The map $\alpha$ is an injective group homomorphism. Composing with the inclusion $\Lambda'_k\subset \Lambda_k\subset \tilde{\Lambda}_{k,k'}$ realises $\Lambda'_{k'}$ as a subgroup of $\tilde{\Lambda}_{k,k'}$, and thus also as a subgroup of the group of automorphisms of $X_{k,k'}$. Similarly, the map $\alpha$ composed with the inclusion $\Lambda'_{k}\subset \Lambda_k$ realises $\Lambda'_{k'}$ as a subgroup of the group of automorphisms of $W_k$. Consider also the injective group homomorphism $1\times \id : \Lambda_{k'}' \subset \tilde{\Lambda}_{k,k'}=\Lambda_k\times \Lambda'_{k'}$. The product of these two homomorphisms realises $(\Lambda_{k'}')^2$ as a subgroup of $\tilde{\Lambda}_{k,k'}$, and thus as a subgroup of the group of automorphisms of $X_{k,k'}$. Note that $\tilde{\Lambda}_{k,k'}/(\Lambda_{k'}')^2\simeq \Lambda_{k}/\alpha(\Lambda_{k'}')$.
It follows from \eqref{commute} that, for any $(g, h)\in (\Lambda'_{k'})^2$, we have
\[
\Pi_{k,k',\varphi} \circ \Gamma_{\tilde{\delta}_{\alpha(gh), h}} = \Pi_{k,k',\varphi} \circ \Gamma_{\tilde{\delta}_{\alpha(g), 1}}\circ \Gamma_{\tilde{\delta}_{\alpha(h),h}}=\Gamma_{\delta_{\alpha(g)}} \circ \Pi_{k,k',\varphi} \circ \Gamma_{\tilde{\delta}_{\alpha(h),h}}
\]
in $\corr^{-r}(X_k, W_{k,H})_\Q$. It is immediate that $(\Gamma_{\tilde{\delta}_{\alpha(h),h}})_* (\Upsilon_{k,k',\varphi})=\Upsilon_{k,k',\varphi}$, and therefore 
\begin{equation}\label{commute2}
(\Pi_{k,k',\varphi} \circ \Gamma_{\tilde{\delta}_{\alpha(gh), h}})_*(\Upsilon_{k,k',\varphi})=(\Gamma_{\delta_{\alpha(g)}} \circ \Pi_{k,k',\varphi})_*(\Upsilon_{k,k',\varphi}), \quad \text{ for all } (g,h) \in  (\Lambda_{k'}')^2.
\end{equation}
This is similar to the equality obtained in \cite[(4.1.4)]{bdp3}. The following calculation is inspired by the one at the end of the proof of \cite[Proposition 4.1.1]{bdp3}:
\begin{align*}
& (\Pi_{k,k',\varphi}\circ  \epsilon_{X_{k,k'}})_*(\Upsilon_{k,k',\varphi})
= (\Pi_{k,k',\varphi})_* \left( \frac{1}{\vert \tilde{\Lambda}_{k,k'}\vert} \sum_{(s,t)\in \tilde{\Lambda}_{k,k'}} \tilde{\chi}_{k,k'}(s, t)(\Gamma_{\tilde{\delta}_{s, t}})_*(\Upsilon_{k,k', \varphi})  \right) \\
&= (\Pi_{k,k',\varphi})_* \left( \frac{\vert \Lambda'_{k'}\vert^2}{\vert \tilde{\Lambda}_{k,k'} \vert \vert \Lambda'_{k'}\vert^2} \sum_{\substack{(s,1)\in \tilde{\Lambda}_{k,k'}/(\Lambda'_{k'})^2 \\ (g,h)\in(\Lambda'_{k'})^2}} \tilde{\chi}_{k,k'}(s\alpha(g), h)(\Gamma_{\tilde{\delta}_{s\alpha(g), h}})_*(\Upsilon_{k,k', \varphi})  \right) \\
&\overset{\eqref{commute}}{=} \frac{\vert \alpha(\Lambda_{k'}') \vert}{\vert \Lambda_{k} \vert} \sum_{s\in \Lambda_{k}/\alpha(\Lambda'_{k'})} \chi_k(s) (\Gamma_{\delta_s})_* (\Pi_{k,k',\varphi})_*\left( \frac{1}{\vert \Lambda'_{k'}\vert^2}\sum_{(g,h)\in(\Lambda'_{k'})^2} \tilde{\chi}_{k,k'}(\alpha(g), h)(\Gamma_{\tilde{\delta}_{\alpha(g), h}})_*(\Upsilon_{k,k', \varphi})  \right) \\
&= \frac{\vert \alpha(\Lambda_{k'}') \vert}{\vert \Lambda_{k} \vert} \sum_{s\in \Lambda_{k}/\alpha(\Lambda'_{k'})} \chi_k(s) (\Gamma_{\delta_s})_* (\Pi_{k,k',\varphi})_*\left( \frac{1}{\vert \Lambda'_{k'}\vert^2}\sum_{(g,h)\in(\Lambda'_{k'})^2} \chi_{k}(\alpha(g))(\Gamma_{\tilde{\delta}_{\alpha(gh), h}})_*(\Upsilon_{k,k', \varphi})  \right) \\
& \overset{\eqref{commute2}}{=}  \frac{\vert \alpha(\Lambda_{k'}') \vert}{\vert \Lambda_{k} \vert} \sum_{s\in \Lambda_{k}/\alpha(\Lambda'_{k'})} \chi_k(s) (\Gamma_{\delta_s})_* \left( \frac{1}{\vert \Lambda'_{k'}\vert}\sum_{g\in\Lambda'_{k'}} \chi_k(\alpha(g))(\Gamma_{\delta_{\alpha(g)}})_*(\Pi_{k,k',\varphi})_*(\Upsilon_{k,k', \varphi})  \right) \\
&= \frac{1}{\vert \Lambda_k \vert} \sum_{s\in \Lambda_k} \chi_k(s) (\Gamma_{\delta_s})_* (\Pi_{k,k',\varphi})_*(\Upsilon_{k,k', \varphi})   \\
& = (\epsilon_{W_k}\circ \Pi_{k,k',\varphi})_*(\Upsilon_{k,k', \varphi}). 
\end{align*}
In the fourth equality, we made the change of variables $g\leftrightarrow gh$, and used the fact that $$\tilde{\chi}_{k,k'}(\alpha(gh), h)=\chi_k(\alpha(g))\chi_k(\alpha(h))\chi'_{k'}(h)=\chi_k(\alpha(g)),$$ since $\chi_k\vert_{\alpha(\Lambda_{k'}')}=\chi'_{k'}$ and these characters are quadratic. 
\end{proof}

\section{Complex Abel--Jacobi maps}\label{s:caj}

Let $V$ denote a smooth projective variety of dimension $d$ defined over $\C$. The familiar Abel--Jacobi map for curves admits a higher dimensional analogue
\begin{equation}\label{map:CAJ}
\AJ_{V}^j : \CH^j(V)_0\lra J^j(V) :=\frac{(\Fil^{d-j+1} H^{2d-2j+1}_{\dR}(V))^\vee}{H_{2d-2j+1}(V(\C), \Z)},
\end{equation} 
defined by the integration formula
\[
\AJ_{V}^j(Z)(\beta)=\int_{\partial^{-1}(Z)} \beta, \qquad \text{for } \beta\in \Fil^{d-j+1} H^{2d-2j+1}_{\dR}(V),
\]
where $\partial^{-1}(Z)$ denotes any continuous $(2d-2j+1)$-chain in $V(\C)$ whose image under the boundary map $\partial$ is $Z$. Here, $H_{2d-2j+1}(V(\C), \Z)$ is seen as a lattice by taking its image in the space $(\Fil^{d-j+1} H^{2d-2j+1}_{\dR}(V))^\vee$ via integration of differential forms over topological chains.
The target of $\AJ_V^j$ is the $j$-th intermediate Jacobian of $V$, which by Poincar\'e duality can be identified with
\begin{equation}\label{PDinterjac}
J^j(V)\simeq H^{2j-1}(V(\C), \C)/(\Fil^{j} H_{\dR}^{2j-1}(V) \oplus H^{2j-1}(V(\C), \Z)).
\end{equation}

We are interested in the Abel--Jacobi maps of the varieties $W_{k,\C}=X_{k,0,\C}$ and $X_{k,\C}=X_{k,k,\C}$, and in particular in the images of Heegner cycles and generalised Heegner cycles.
Observe, using the notations of Definition \ref{rem:intcycles}, that $m_{k,0}\tilde{\Delta}_{\varphi}^{\HC}=\tilde{\epsilon}_{W_{k}}\tilde{\Delta}_{\varphi}^{\HC}$ and $m_{k,k}\tilde{\Delta}_{\varphi}^{\GHC}=\tilde{\epsilon}_{X_{k}}\tilde{\Delta}_{\varphi}^{\GHC}$.
Since Abel--Jacobi maps are functorial with respect to correspondences \cite[Propositions 1,2 \& 4 iii)]{zucker}, we will solely be interested in the pieces of these maps that survive after applying the relevant correspondences. 
By Propositions \ref{prop:epsiloncusp} and \ref{eqn:filr-para-recall} (with $k'=k$) respectively, this allows us to view the relevant Abel--Jacobi maps as homomorphisms
\begin{equation}\label{epsAJ}
\AJ_{W_k} := \AJ_{W_k}^{r+1} \circ \tilde{\epsilon}_{W_k} = \tilde{\epsilon}_{W_k} \circ  \AJ_{W_k}^{r+1} : \CH^{r+1}(W_{k,\C})_0 \lra \frac{S_{k+2}(\Gamma_1(N))^\vee}{L_k}, 
\end{equation}
where $L_k=\tilde{\epsilon}_{W_k} H_{k+1}(W_{k,\C}(\C), \Z)$, and 
\[
\AJ_{X_k} := \AJ_{X_k}^{k+1} \circ \tilde{\epsilon}_{X_k} = \tilde{\epsilon}_{X_k} \circ  \AJ_{X_k}^{k+1} : \CH^{k+1}(X_{k,\C})_0 \lra \frac{(S_{k+2}(\Gamma_1(N))\otimes \sym^k H_{\dR}^1(A_\C))^\vee}{\tilde{L}_{k}}, 
\]
where $\tilde{L}_{k}=\tilde{\epsilon}_{X_k} H_{2k+1}(X_{k,\C}(\C), \Z)$. 
With these notations, for all $(\varphi, A')\in \Isog^{\cN}(A)$, we have 
\begin{equation}\label{eq:aj}
\AJ^{r+1}_{W_k}(m_{k,0}\tilde{\Delta}_{\varphi}^{\HC})=\AJ_{W_{k}}(\tilde{\Delta}_{\varphi}^{\HC}) \qquad \text{ and } \qquad \AJ^{k+1}_{X_k}(m_{k,k}\tilde{\Delta}_{\varphi}^{\GHC})=\AJ_{X_{k}}(\tilde{\Delta}_{\varphi}^{\GHC}).
\end{equation}

The Abel--Jacobi formula that we are about to state gives an expression for $\AJ_{W_k}(\tilde{\Delta}_{\varphi}^{\HC})$ in $S_{k+2}(\Gamma_1(N))^\vee$ modulo a lattice $L'_k$ which is slightly larger that $L_k$. This is less precise, but the resulting formula gains in explicitness. 
\begin{definition}
Define the lattice $L'_k\subset S_{k+2}(\Gamma_1(N))^\vee$ to be the $\Z$-module
generated by the period lattice attached to $S_{k+2}(\Gamma_1(N))$ (see \cite[Definition 3]{bdlp}) and the functionals $J_{s,t,P}$ defined by
\[
J_{\alpha,\beta,P}(f) := (2\pi i )^{k+1} \int_\alpha^\beta P(z) f(z) dz, \qquad
\mbox{ with } \alpha,\beta\in \PP^1(\Q),\quad P(X)\in \Z[X]^{\deg=k}.
\]
See \cite[\S 9]{bdlp} for further details. 
\end{definition}

The goal of this section is to prove the following:
\begin{theorem}\label{thm:CAJ}
Let $N\geq 5$ and $k=2r\geq 2$ be integers.
Let $K$ be an imaginary quadratic field satisfying the Heegner hypothesis with respect to $N$, and fix a choice of cyclic $N$-ideal $\cN$. Let $A$ be an elliptic curve with CM by $\oh_K$ over the Hilbert class field $H$ with a $\Gamma_1(N)$-level structure $t\in A[\cN]$. Let $\varphi : A_\C \lra \C/\langle 1, \tau' \rangle$ be an isogeny of degree $d_\varphi$ representing an element of $\Isog^{\cN}(A)$ and satisfying $\varphi(t)=\frac{1}{N} \pmod{\langle 1, \tau' \rangle}$. 
 Then, for all $f\in S_{k+2}(\Gamma_1(N))$, we have 
 \[
 \AJ_{W_k}((2^k k!)^2 d_{\varphi}^{r}\tilde{\Delta}_{\varphi}^{\HC})(\omega_f)=\frac{(-2\sqrt{-d_K})^r d_{\varphi}^{k} (2\pi i)^{r+1}m_{k,k}^2}{(\tau'-\overline{\tau}')^r} \int_{i\infty}^{\tau'}(z-\tau')^r(z-\bar{\tau}')^r f(z) dz \pmod {L'_k}.
 \]
 \end{theorem}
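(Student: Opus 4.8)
The plan is to deduce Theorem \ref{thm:CAJ} from the known complex Abel--Jacobi formula for generalised Heegner cycles, namely \cite[Theorem 1]{bdlp}, by transporting it along the correspondence $\Pi_{k,k,\varphi}$ of Proposition \ref{prop:relation} (taken with $k'=k$, so $r'=r$). The key input is the functoriality of the complex Abel--Jacobi map with respect to correspondences \cite[Propositions 1, 2 \& 4 iii)]{zucker}: since $(\Pi_{k,k,\varphi})_*(\Delta_{k,\varphi}) = d_\varphi^{r}\,\Delta_\varphi^{\HC}$ by Proposition \ref{prop:relation}, the induced map on intermediate Jacobians sends $\AJ_{X_k}(\tilde\Delta_\varphi^{\GHC})$ to $d_\varphi^{r}\,\AJ_{W_k}(\tilde\Delta_\varphi^{\HC})$ up to the appropriate normalising constants ($m_{k,k}$ versus $m_{k,0}$), and dually the map on cusp forms is the pullback $(\Pi_{k,k,\varphi})^*$ from $\Fil^{r+1}\tilde\epsilon_{W_k}H^{k+1}_{\dR}(W_{k,\C}) \cong S_{k+2}(\Gamma_1(N))$ to $\Fil^{k+1}\tilde\epsilon_{X_k}H^{2k+1}_{\dR}(X_{k,\C}) \cong S_{k+2}(\Gamma_1(N))\otimes\sym^k H^1_{\dR}(A_\C)$.

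\textbf{Main steps.} First I would make precise the commutative square relating $\AJ_{W_k}$ and $\AJ_{X_k}$ via $(\Pi_{k,k,\varphi})_*$ on Chow groups and the dual $(\Pi_{k,k,\varphi})^*$ on the filtered de Rham pieces, using the projector identities $m_{k,0}\tilde\Delta_\varphi^{\HC} = \tilde\epsilon_{W_k}\tilde\Delta_\varphi^{\HC}$ and $m_{k,k}\tilde\Delta_\varphi^{\GHC} = \tilde\epsilon_{X_k}\tilde\Delta_\varphi^{\GHC}$ from Section \ref{s:caj}, together with the identity $\tilde\epsilon_{W_k}\circ(\Pi_{k,k,\varphi})_* = (\Pi_{k,k,\varphi})_*\circ\tilde\epsilon_{X_k}$ which follows from \eqref{eq:epsiloncommutes} in the proof of Proposition \ref{prop:relation}. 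Second, I would compute $(\Pi_{k,k,\varphi})^*(\omega_f)$ explicitly: by the definition of $\Pi_{k,k,\varphi}$ as the image of $\Psi_{k,k,\varphi} = (\id_{W_k}, (\id_A, [d_\varphi\sqrt{-d_K}])^{r}, \id_{W_k})$, pulling back $\omega_f$ along the projection $\pi_2$ and pushing forward along $\pi_{01}$ amounts to wedging $\omega_f$ with the Hodge class in $\sym^k H^1_{\dR}(A_\C)$ obtained from the graph of $[d_\varphi\sqrt{-d_K}]^{r}$; concretely this introduces the factor $(d_\varphi\sqrt{-d_K})^r$ (up to sign) coming from $[d_\varphi\sqrt{-d_K}]^*\omega_A = d_\varphi\sqrt{-d_K}\,\omega_A$ and a binomial-type combinatorial constant from symmetrising. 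Third, I would substitute the closed formula of \cite[Theorem 1]{bdlp} for $\AJ_{X_k}(\tilde\Delta_\varphi^{\GHC})$ evaluated at $\omega_f\wedge\omega_A^j\bar\omega_A^{k-j}$ — which is exactly an integral $\int_{i\infty}^{\tau'}(z-\tau')^j(z-\bar\tau')^{k-j}f(z)\,dz$ against the relevant period — and read off that the class $(\Pi_{k,k,\varphi})^*\omega_f$ picks out the single term $j = r$, producing the stated integrand $(z-\tau')^r(z-\bar\tau')^r f(z)$ and the factor $(\tau'-\bar\tau')^{-r}$ from the chosen de Rham basis of $H^1_{\dR}(A_\C)$. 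Finally, I would carefully track all the normalising constants — the powers of $2$, $k!$, $N$, $2\pi i$, the constants $m_{k,k}$ and the $d_\varphi$-powers ($d_\varphi^r$ from Proposition \ref{prop:relation}, $d_\varphi^k$ from the pullback of $\omega_f$ on the fibre $(A')^k$, and the conversion between $A^k$ and $(A')^k$ forms via $\varphi$) — to match the asserted coefficient, working modulo the enlarged lattice $L'_k$, whose definition is precisely engineered to absorb the ambiguity between $L_k$ and the rational-coefficient periods as well as the difference between $\partial^{-1}$ of the pushed-forward chain and the naive one.

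\textbf{Main obstacle.} The conceptual content — functoriality of $\AJ$ under $\Pi_{k,k,\varphi}$ — is essentially formal given Proposition \ref{prop:relation} and \cite{zucker}. The real work, and the place where errors are easiest to make, is the exact bookkeeping of constants: pinning down the combinatorial factor from the symmetriser $\epsilon_{A^k}$ acting on $(\Gamma_{[d_\varphi\sqrt{-d_K}]})^r$, the precise normalisation of the de Rham class of $A_\C = \C/\oh_K$ relative to the basis used in \cite{bdlp}, and the interplay of the integral cycle normalisations $m_{k,0}$, $m_{k,k}$ with the factor $(2^k k!)^2$ appearing in the statement. I would therefore devote most of the proof to a careful, step-by-step constant-chasing argument, invoking \cite[Theorem 1]{bdlp} and \cite[\S 9]{bdlp} for the period-lattice conventions, and verifying at the end that both sides transform identically under the action of $\Gal(\bar K/H)$ and under scaling $\omega_f$, which serves as a consistency check on the normalisation.
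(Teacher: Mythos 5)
Your proposal follows essentially the same route as the paper's own proof: the paper establishes the functoriality step as Lemma \ref{functoriality} (via Proposition \ref{prop:relation} with $k'=k$ and \cite{zucker}), computes the pullback in Lemma \ref{pullback}, obtaining $\epsilon_{X_k}\Pi_{k,\varphi}^*(\omega_f)=(2d_\varphi\sqrt{-d_K})^r\,\omega_f\wedge\omega_A^r\eta_A^r$ with $\omega_A=\varphi^*(2\pi i\,dw)$, and then substitutes \cite[Theorem 1]{bdlp}, exactly as you outline. The constant-chasing you defer (the $2^r$ from the symmetriser, the choice of $\omega_A$, and the $m_{k,k}$ versus $m_{k,0}$ normalisations) is precisely what the paper's two lemmas carry out, so your plan is sound and matches the paper's argument.
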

 
 \begin{remark}
A formula for the images of generalised Heegner cycles under the complex Abel--Jacobi map $\AJ^{k+1}_{X_k}$ was established in joint work of the author with Bertolini, Darmon, and Prasanna \cite[Theorem 1]{bdlp} by writing down explicit bounding chains for generalised Heegner cycles and computing the defining integrals in terms of explicit line integrals of modular forms. It was noted in \cite[Remark 10]{bdlp} that the techniques used can likely be adapted to calculate the images of Heegner cycles under the complex Abel--Jacobi map $\AJ^{r+1}_{W_k}$. While this is indeed possible, we have opted for a different method. We will use Proposition \ref{prop:relation} together with the functorial properties of Abel--Jacobi maps to deduce the formula for Heegner cycles directly from the formula for generalised Heegner cycles.
\end{remark}

 \begin{remark}\label{rem:hopkins}
Fix a normalised newform $f$ in $S_{k+2}(\Gamma_1(N))$. Composing the complex Abel--Jacobi map \eqref{epsAJ} with the projection to the $f$-isotypical Hecke component of the intermediate Jacobian gives rise to a map $\AJ_{W_k, f} : \CH^{r+1}(W_{k,\C})_0\lra \C/L_{f}$, where $L_f$ is the period lattice of $f$ in $\C$. Theorem \ref{thm:CAJ} yields the formula 
\begin{equation}\label{hopkins}
\AJ_{W_k, f}((2^k k!)^2 d_{\varphi}^{r}\tilde{\Delta}_\varphi^{\HC})=\frac{(-2\sqrt{-d_K})^r d_{\varphi}^{k}m_{k,k}^2}{(\tau'-\overline{\tau}')^r}\alpha(\tau') \pmod {L_f},
\end{equation}
where $\alpha : \Gamma_1(N)\backslash \mathcal{H}^{\mathrm{CM}}\lra \C/L_f$ is the map on Heegner points of \cite[Lemma 2.2]{hopkins} precomposed with $\Gamma_1(N)\backslash \mathcal{H}^{\mathrm{CM}}\twoheadrightarrow  \Gamma_0(N)\backslash \mathcal{H}^{\mathrm{CM}}$. Suppose that the fixed embedding $H\hookrightarrow \C$ of Section \ref{s:convention} is such that $A_\C=\C/\oh_K=\C/\langle 1, \tau\rangle$ with $\tau =(-d_K+\sqrt{-d_K})/2 \in \cH$ the standard generator of $\oh_K$. Suppose also that $\tau'=d_{\varphi}\tau$ (e.g., take $\varphi : \C/\langle 1, \tau\rangle\lra \C/\langle 1/n, \tau\rangle\overset{[n]}{\lra} \C/\langle 1, n\tau\rangle$ for some positive integer $n$). In this case, formula \eqref{hopkins} becomes
\[
\AJ_{W_k, f}((2^k k!)^2 d_{\varphi}^{r}\Delta_\varphi^{\HC})=(-2d_\varphi)^r m_{k,k}^2\alpha(\tau') \pmod {L_f}.
\]
As explained in \cite{hopkins}, a relation such as \eqref{hopkins} was expected to hold, but was not verified except for weight $4$ newforms (i.e., when $k=2$) as a consequence of the work of Schoen \cite{schoen}. The relation \eqref{hopkins} implies the compatibility of the conjectural partial generalisations of the Gross--Kohnen--Zagier theorem for higher weights formulated in \cite[Conjectures 3.1 \& 3.3]{hopkins} with the conjectures of Beilinson and Bloch. See the introduction of \cite{hopkins} for further details and \cite{Zemel} for related work on higher weight Gross--Kohnen--Zagier type theorems.
\end{remark}
 
We will need the following two lemmas for the proof of Theorem \ref{thm:CAJ}. Recall the correspondence $\Pi_{k,\varphi}:=\Pi_{k,k,\varphi}$ from $X_k$ to $W_{k,H}$ defined in \eqref{Pikk}.

\begin{lemma}\label{functoriality}
Under the assumptions of Theorem \ref{thm:CAJ},
let $\varphi : A \lra A'$ be an isogeny of degree $d_{\varphi}$ representing an element of $\Isog^{\cN}(A)$. Then, for all $f\in S_{k+2}(\Gamma_1(N))$, we have 
\[
\AJ_{W_k}((2^k k!)^2 d_{\varphi}^{r}\tilde{\Delta}_\varphi^{\HC})(\omega_f)=\AJ_{X_k}(\tilde{\Delta}_\varphi^{\GHC})(\Pi_{k, \varphi}^*(\omega_f)) \pmod {L_k}.
\]
\end{lemma}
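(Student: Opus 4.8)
The plan is to deduce the identity purely from the functoriality of the complex Abel--Jacobi map under the correspondence $\Pi_{k,\varphi} = \Pi_{k,k,\varphi}$ of \eqref{Pikk}, combined with the computation of $(\Pi_{k,\varphi})_*$ on cycles carried out in Proposition \ref{prop:relation} (in the case $k'=k$, so $r'=r$). Concretely, Proposition \ref{prop:relation} with $k'=k$ gives $(\Pi_{k,\varphi})_*(\Delta_{\varphi}^{\GHC}) = d_\varphi^{r}\,\Delta_\varphi^{\HC}$, and after clearing denominators (Definition \ref{rem:intcycles}) this reads $(\Pi_{k,\varphi})_*(m_{k,k}\tilde\Delta_\varphi^{\GHC}) = m_{k,0}\, d_\varphi^{r}\, \tilde\Delta_\varphi^{\HC}$ up to keeping careful track of the factors $m_{k,k}$ versus $m_{k,0}=(2N)^k k!$. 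The functoriality of $\AJ$ under a correspondence $\Gamma\in\corr^{-r}(X_k, W_{k,H})$ \cite[Propositions 1, 2 \& 4 iii)]{zucker} says that for a null-homologous cycle $Z$ on $X_{k,\C}$ and a class $\beta$ in the appropriate step of the Hodge filtration of $H^{k+1}_{\dR}(W_{k,\C})$ one has $\AJ_{W_k}^{r+1}(\Gamma_* Z)(\beta) = \AJ_{X_k}^{k+1}(Z)(\Gamma^*\beta)$, at least modulo the relevant lattices; applying this to $\Gamma = \Pi_{k,\varphi}$, $Z = \tilde\Delta_\varphi^{\GHC}$, and $\beta = \omega_f$ yields the stated equality after reconciling the numerical constants.

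The steps I would carry out, in order: first, record the precise form of the functoriality statement, i.e. that $\AJ$ commutes with pushforward/pullback along correspondences and that, when composed with the Scholl projectors, it lands in the torus quotients \eqref{epsAJ} — here I would invoke that $\Pi_{k,\varphi}^*$ sends $\Fil^{r+1}\tilde\epsilon_{W_k}H^{k+1}_{\dR}(W_{k,\C})$ into $\Fil^{k+1}\tilde\epsilon_{X_k}H^{2k+1}_{\dR}(X_{k,\C})$, so that $\Pi_{k,\varphi}^*(\omega_f)$ makes sense as an argument of $\AJ_{X_k}(\tilde\Delta_\varphi^{\GHC})$. Second, combine this with Proposition \ref{prop:relation} (case $k'=k$) to get, on the level of integral cycles, $(\Pi_{k,\varphi})_*(\tilde\Delta_\varphi^{\GHC}) = d_\varphi^{r}\,\frac{m_{k,0}}{m_{k,k}}\tilde\Delta_\varphi^{\HC}$, and hence, after multiplying by the normalisation $m_{k,k}$ appearing in \eqref{eq:aj}, $\AJ_{W_k}^{r+1}\big((m_{k,0}/m_{k,k})\, d_\varphi^{r}\, m_{k,k}\tilde\Delta_\varphi^{\HC}\big)(\omega_f) = \AJ_{X_k}^{k+1}(m_{k,k}\tilde\Delta_\varphi^{\GHC})(\Pi_{k,\varphi}^*\omega_f)$. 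Third, use \eqref{eq:aj} to rewrite the right-hand side as $\AJ_{X_k}(\tilde\Delta_\varphi^{\GHC})(\Pi_{k,\varphi}^*\omega_f)$ and the left-hand side in terms of $\AJ_{W_k}(\tilde\Delta_\varphi^{\HC})$, then absorb the prefactor $m_{k,0}=(2N)^kk!$ and the remaining powers of $2$, $k!$ into the combination $(2^kk!)^2 d_\varphi^{r}$ demanded by the statement, checking that $(2^kk!)^2$ is exactly what is left after dividing out by whatever the $\zucker$-functoriality and projector normalisations contribute — this is a bookkeeping computation with the explicit value $m_{k,k} = (2N)^k k!\, 2^k k!$. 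Finally, note that the various lattices only enter as the ambient tori, and that functoriality respects them (or enlarges $L_k$ to $L_k$ again since $\Pi_{k,\varphi}$ is defined over $H$ and acts integrally on integral homology), so the equality holds modulo $L_k$.

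The main obstacle I anticipate is \emph{not} conceptual but the precise reconciliation of numerical constants: one must verify that the degree factor from the pushforward by $\pi_2$ already accounted for in Proposition \ref{prop:relation}, the factor $d_\varphi^{r'}=d_\varphi^{r}$ there, and the mismatch between the integral normalisations $m_{k,0}$ and $m_{k,k}$ combine to exactly the prefactor $(2^kk!)^2 d_\varphi^{r}$ on the left and introduce no spurious factor on the right. A secondary subtlety is making sure the functoriality of $\AJ$ along a correspondence of negative degree in \cite{zucker} is being applied with the correct Tate twist / cohomological degree bookkeeping, i.e. that $\Pi_{k,\varphi}\in\corr^{-r}(X_k,W_{k,H})$ indeed induces $\Pi_{k,\varphi}^*\colon H^{k+1}_{\dR}(W_{k,\C})\to H^{2k+1}_{\dR}(X_{k,\C})$ raising cohomological degree by $2r$ and compatibly shifting the Hodge filtration step from $r+1$ to $k+1$; once this is set up correctly the lemma is essentially immediate. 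I would also flag that one should check $\Pi_{k,\varphi}^*\omega_f$ is genuinely a cusp-form-type class and not, after projecting by $\tilde\epsilon_{X_k}$, something that pairs trivially — but this is guaranteed by Proposition \ref{eqn:filr-para-recall} together with the fact that $\Pi_{k,\varphi}$ intertwines the projectors, which is precisely the content of equation \eqref{eq:epsiloncommutes} established in the proof of Proposition \ref{prop:relation}.
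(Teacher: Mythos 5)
Your proposal follows essentially the same route as the paper: Zucker's functoriality of the Abel--Jacobi map under the correspondence $\Pi_{k,\varphi}$, Proposition \ref{prop:relation} in the case $k'=k$, and the normalisation identities \eqref{eq:aj}; the paper's proof is exactly this three-line argument. The only thing to fix is the constant bookkeeping you flagged: since $\tilde\Delta_\varphi^{\GHC}=m_{k,k}\Delta_\varphi^{\GHC}$ and $\tilde\Delta_\varphi^{\HC}=m_{k,0}\Delta_\varphi^{\HC}$ with $m_{k,k}=2^k k!\,m_{k,0}$, the push-forward on integral cycles is
\[
(\Pi_{k,\varphi})_*\bigl(\tilde\Delta_\varphi^{\GHC}\bigr)=\frac{m_{k,k}}{m_{k,0}}\,d_\varphi^{r}\,\tilde\Delta_\varphi^{\HC}=2^k k!\,d_\varphi^{r}\,\tilde\Delta_\varphi^{\HC},
\]
i.e.\ the ratio is $m_{k,k}/m_{k,0}$, not the reciprocal $m_{k,0}/m_{k,k}$ appearing in your intermediate displays. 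With the corrected ratio, applying the functoriality to $m_{k,k}\tilde\Delta_\varphi^{\GHC}$ (or multiplying the resulting congruence by $m_{k,k}$) and then invoking \eqref{eq:aj} on both sides produces exactly the prefactor $(2^k k!)^2 d_\varphi^{r}$ on the left and $\AJ_{X_k}(\tilde\Delta_\varphi^{\GHC})(\Pi_{k,\varphi}^*\omega_f)$ on the right, so no further reconciliation is needed.
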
 

\begin{proof}
By functoriality of Abel--Jacobi maps with respect to correspondences \cite[Propositions 1,2 \& 4 iii)]{zucker}, the following diagram commutes:
\begin{equation*}
\begin{tikzcd}
\CH^{k+1}(X_k)_0(\C) \arrow{r}{\AJ_{X_k}^{k+1}} \arrow{d}[swap]{(\Pi_{k, \varphi})_*}
& J^{k+1}(X_k/\C) \arrow{d}{(\Pi_{k, \varphi}^*)^\vee} \\
\CH^{r+1}(W_k)_0(\C) \arrow{r}[swap]{\AJ_{W_k}^{r+1}} 
& J^{r+1}(W_k/\C).
\end{tikzcd}
\end{equation*}
By Proposition \ref{prop:relation} (with $k'=k$), we have $(\Pi_{k,\varphi})_* (\Delta_{\varphi}^{\GHC})=d_{\varphi}^r \Delta_{\varphi}^{\HC}$. 
Since $m_{k,k}=2^k k!m_{k,0}$, we deduce that $(\Pi_{k,\varphi})_* (\tilde{\Delta}_{\varphi}^{\GHC})=2^k k! d_{\varphi}^r \tilde{\Delta}_{\varphi}^{\HC}$. 
The result then follows by \eqref{eq:aj}.
\end{proof}

Let $\omega_A \in H^{1,0}(A_\C)$ be a non-zero differential form. Recall that the isomorphism $\End_H(A)\simeq \oh_K$ is chosen such that $[\alpha]^*\omega_A = \alpha\omega_A$ for all $\alpha \in \oh_K$. The choice of $\omega_A$ determines a generator $\eta_A$ of $H^{0,1}(A_\C)$ by the condition $\langle \omega_A, \eta_A\rangle=1$, where $\langle \: \: , \: \rangle$ denotes the cup-product on the de Rham cohomology of $A_\C$. The generator $\eta_A$ satisfies $[\alpha]^*\eta_A=\bar{\alpha}\eta_A$. For $0\leq j\leq k$, define 
\begin{equation}\label{omegaA}
\omega_A^{j}\eta_A^{k-j}:=\frac{j!(k-j)!}{k!}\sum_{\substack{I\subset \{ 1, \ldots, k \} \\ \vert I \vert=j}} \pr_1^*\varpi_{1, I}\wedge \ldots \wedge \pr_k^*\varpi_{k, I}, 
\end{equation}
where $\varpi_{i, I}$ is either $\omega_A$ or $\eta_A$ depending on whether $i\in I$ or $i\not\in I$. A basis of $\sym^k H^1_{\dR}(A_\C)$ is then given by $\{ \omega_A^{j}\eta_A^{k-j} \}$ for $0\leq j\leq k$. The cup product $\langle \: \: , \: \rangle$ induces a self-duality
\[
\langle \: \: , \: \rangle_{A^k} : \sym^k H^1_{\dR}(A_\C) \times \sym^k H^1_{\dR}(A_\C) \lra \C,
\]
given by 
\begin{equation}\label{duality}
\langle x_1 \cdots x_k, y_1 \cdots y_k\rangle_{A^k} := \frac{1}{k!} \sum_{\sigma\in \Sigma_k} \langle x_1, y_{\sigma(1)} \rangle\ldots  \langle x_k, y_{\sigma(k)} \rangle.
\end{equation}

\begin{lemma}\label{pullback}
Under the assumptions of Theorem \ref{thm:CAJ},
let $\varphi : A \lra A'$ be an isogeny of degree $d_{\varphi}$ representing an element of $\Isog^{\cN}(A)$. Let $\omega_A \in H^{1,0}(A_\C)$ be a non-zero differential form.
Then, for all $f\in S_{k+2}(\Gamma_1(N))$, we have 
$
\epsilon_{X_k}\Pi_{k, \varphi}^*(\omega_f)=(2d_\varphi \sqrt{-d_K})^r \omega_f \wedge \omega_A^r\eta_A^r.
$
\end{lemma}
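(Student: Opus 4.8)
The plan is to compute the pullback $\Pi_{k,\varphi}^*(\omega_f)$ explicitly under the K\"unneth decomposition of $H^{2k+1}_{\dR}(X_{k,\C})$, and then apply the projector $\epsilon_{X_k}$. Recall that $\Pi_{k,\varphi}$ is the class of the image of $\Psi_{k,k,\varphi} = (\id_{W_k}, (\id_A, [d_\varphi\sqrt{-d_K}])^r, \id_{W_k})$ inside $Z_{k,k} = X_k \times_H W_{k,H}$, and that the induced pullback on cohomology is $\Pi_{k,\varphi}^* = (\pi_{01})_* \circ (\text{cup with }[\Pi_{k,\varphi}]) \circ \pi_2^*$, or more concretely $\Pi_{k,\varphi}^*(\omega_f) = (\pi_{01})_*\big((\Psi_{k,k,\varphi})_*(\pi_2 \circ \Psi_{k,k,\varphi})^*(\omega_f)\big)$ up to the usual identifications. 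Since $\pi_2 \circ \Psi_{k,k,\varphi} = \id_{W_k}$ on the first $W_k$-factor and $\pi_{01}\circ \Psi_{k,k,\varphi}$ is the map $W_{k,H}\times_H A^r \to X_k = W_{k,H}\times_H A^k$ given by $(w, (y_i)) \mapsto (w, (y_i, [d_\varphi\sqrt{-d_K}]y_i)_i)$, the computation reduces to: push forward $\omega_f$ (viewed on $W_k$) along the identity, then push forward along the graph-type map in the $A$-directions.

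First I would reduce to the $A$-part. Writing $\gamma: A^r \to A^k = A^r \times A^r$, $(y_i)_i \mapsto (y_i, [d_\varphi\sqrt{-d_K}]y_i)_i$, the pullback $\Pi_{k,\varphi}^*$ acts on the K\"unneth factor $\sym^k H^1_{\dR}(A_\C)$ as $\gamma_*$ composed with the identity on the $S_{k+2}$-factor, so it suffices to compute $\gamma_*(1)\in \sym^k H^1_{\dR}(A_\C)$ — i.e. the class of the $r$-dimensional cycle $\Gamma_{[d_\varphi\sqrt{-d_K}]}^{\times r}$ inside $A^k$. On a single factor $A\times A$, the class of the graph $\Gamma_{[\lambda]}$ of an endomorphism $[\lambda]$ with $\lambda = d_\varphi\sqrt{-d_K}$ is, under the basis $\{\omega_A,\eta_A\}$, computed from $[\lambda]^*\omega_A = \lambda\omega_A$, $[\lambda]^*\eta_A = \bar\lambda\eta_A$: it equals $\pr_1^*\eta_A\wedge\pr_2^*\omega_A \cdot(\pm1) + \pr_1^*\omega_A\wedge\pr_2^*\eta_A\cdot(\pm1)$ with coefficients $\lambda$ and $-\bar\lambda$ (or similar, up to a sign from the orientation/duality convention of \eqref{duality}), so up to the sign bookkeeping the graph class is $\lambda\,\pr_2^*\omega_A\wedge\pr_1^*\eta_A - \bar\lambda\, \pr_2^*\eta_A\wedge\pr_1^*\omega_A$. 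Here $\lambda - \bar\lambda = 2d_\varphi\sqrt{-d_K}$ (since $\sqrt{-d_K}$ is purely imaginary), which is where the factor $(2d_\varphi\sqrt{-d_K})^r$ will come from after raising to the $r$-th external power. Taking the product over the $r$ factors and pushing forward $\omega_f$ trivially gives $\Pi_{k,\varphi}^*(\omega_f)$ as $\omega_f$ wedged with a sum of terms $\omega_A^j\eta_A^{k-j}$.

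Then I would apply $\epsilon_{X_k} = \epsilon_{W_k}\times\epsilon_{A^k}$. The key point is that $\epsilon_{A^k}$ projects $\sym^k H^1_{\dR}(A_\C)$ — already $\Sigma_k$-symmetric and $(\mu_2)^k$-alternating — onto itself, but the specific element $\gamma_*(1)$ supported on the "diagonal-graph" cycle $A^r\hookrightarrow A^k$ only survives with its balanced part; the relevant computation is that $\epsilon_{A^k}$ applied to the product $\prod_{i=1}^r(\lambda\,\pr_{2i-1,2i}\text{-graph terms})$ collapses all the mixed $\omega_A^j\eta_A^{k-j}$ with $j\neq r$ and leaves only $(2d_\varphi\sqrt{-d_K})^r\,\omega_A^r\eta_A^r$, using the normalization \eqref{omegaA} with its combinatorial factor $\tfrac{r!r!}{k!}$ and the fact that the cross terms cancel in pairs under the sign character on the $(\mu_2)^k$-action (multiplication by $-1$ on an $A$-factor sends $\omega_A\mapsto -\omega_A$, $\eta_A\mapsto-\eta_A$, so genuinely mixed terms within the symmetrization are killed unless the $\omega$- and $\eta$-counts are each $r$). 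I expect the main obstacle to be exactly this bookkeeping: tracking the signs coming from the duality convention \eqref{duality} and from the orientation of the graph cycles, together with the combinatorial normalization in \eqref{omegaA}, so that the final coefficient comes out as precisely $(2d_\varphi\sqrt{-d_K})^r$ with no stray powers of $2$, $r!$, or $d_\varphi$. A clean way to organize this is to first verify the $k=2$, $r=1$ case by hand (where $\sym^2 H^1_{\dR}(A_\C)$ is 3-dimensional and everything is explicit), and then argue the general case by taking external/symmetric powers, since $\Pi_{k,\varphi}$, the projector, and the cycle $\Upsilon$ are all built multiplicatively from the $r=1$ data.
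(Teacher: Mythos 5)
Your plan is viable and it is genuinely different from the paper's argument. The paper never writes down the class of $\Pi_{k,\varphi}$ in cohomology: it determines $\epsilon_{X_k}\Pi_{k,\varphi}^*(\omega_f)$ by duality, pairing it against the conjugate classes $\overline{\omega}_g\wedge\omega_A^j\eta_A^{k-j}$ (using that $\Fil^{k+1}\epsilon_{X_k}H^{2k+1}_{\dR}(X_{k,\C})$ pairs perfectly with its complex conjugate), and then pushes the pairing through adjunction for $\Psi_{k,\varphi}$ until only the integral $\int_{A^r}\bigl((\id_A,[d_\varphi\sqrt{-d_K}])^r\bigr)^*(\omega_A^j\eta_A^{k-j})$ remains; this is nonzero only for $j=r$, and comparison with $\langle\omega_f\wedge\omega_A^r\eta_A^r,\overline{\omega}_g\wedge\omega_A^r\eta_A^r\rangle_{X_k}$ gives the coefficient. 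You instead compute the class directly: by the projection formula $\Pi_{k,\varphi}^*(\omega_f)=(\id_{W_k}\times\gamma)_*(\mathrm{pr}_{W_k}^*\omega_f)=\omega_f\wedge\gamma_*(1)$, where $\gamma=(\id_A,[d_\varphi\sqrt{-d_K}])^r$, so everything reduces to the K\"unneth class of the graph $\Gamma_{[\lambda]}\subset A\times A$, $\lambda=d_\varphi\sqrt{-d_K}$, raised to the $r$-th external power and then projected. Both routes ultimately rest on $[\lambda]^*\omega_A=\lambda\omega_A$, $[\lambda]^*\eta_A=\bar\lambda\eta_A$ and the normalisation \eqref{omegaA}; yours is more direct, but it forces you to fix the orientation and sign conventions of the graph class itself, which the paper's pairing argument sidesteps (note that in the paper's computation the intermediate factors are $(-2d_\varphi\sqrt{-d_K})^r$ and $(-1)^r$, which cancel — so your worry about stray signs is well founded, though resolvable; your suggestion to pin down $r=1$ by hand and argue multiplicatively is a sound way to do it).

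One justification in your sketch is wrong, although the claim it supports is correct. You say the mixed monomials $\omega_A^j\eta_A^{k-j}$ with $j\neq r$ are "killed by the sign character on the $(\mu_2)^k$-action." They are not: $[-1]^*$ negates both $\omega_A$ and $\eta_A$, so every class in $H^1\otimes\cdots\otimes H^1$ transforms by the product character, and indeed the $\omega_A^j\eta_A^{k-j}$ for all $0\leq j\leq k$ form a basis of $\epsilon_{A^k}H^k_{\dR}(A^k_\C)=\sym^k H^1_{\dR}(A_\C)$. What the $(\mu_2)^k$-projector does remove are the K\"unneth components of $\Gamma_{[\lambda]}$ involving $H^0$ or $H^2$ of a factor. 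The correct reason no $j\neq r$ term appears is structural: the $H^1\otimes H^1$ component of the class of $\Gamma_{[\lambda]}$ has vanishing $\omega_A\otimes\omega_A$ and $\eta_A\otimes\eta_A$ coefficients (they are computed by $\langle\eta_A,[\lambda]^*\eta_A\rangle=0$ and $\langle\omega_A,[\lambda]^*\omega_A\rangle=0$), so each of the $r$ graph factors contributes exactly one $\omega_A$ and one $\eta_A$, and the symmetrised product can only be a multiple of $\omega_A^r\eta_A^r$. With that fix, and careful sign bookkeeping yielding the coefficient $(\lambda-\bar\lambda)^r=(2d_\varphi\sqrt{-d_K})^r$, your argument goes through.
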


\begin{remark}
The right hand side of the equality in Lemma \ref{pullback} does not depend on the choice of non-zero differential $\omega_A$ since scaling $\omega_A$ by $\lambda\in \C^\times$ leads to a scaling of $\eta_A$ by $\lambda^{-1}$.  
\end{remark}

\begin{proof}
The correspondence $\Pi_{k, \varphi}$ induces a pull-back map on de Rham cohomology 
\[
\Pi_{k, \varphi}^* : \Fil^{r+1}H_{\dR}^{k+1}(W_{k,\C}) \lra \Fil^{k+1}H_{\dR}^{2k+1}(X_{k, \C}),
\]
given by the usual formula $\Pi_{k, \varphi}^*(\omega)=(\pi_{01})_*(\cl_{\dR}(\Pi_{k, \varphi}) \wedge \pi_2^*(\omega))$, where $\cl_{\dR}(\Pi_{k, \varphi}) \in H_{\dR}^{3k+2}(Z_{k,\C})$ is the de Rham cycle class of $\Pi_{k, \varphi}$. We are only interested in the piece of $\Fil^{k+1}H_{\dR}^{2k+1}(X_{k, \C})$ that survives after applying $\epsilon_{X_k}$. Since $$\Fil^{k+1}H_{\dR}^{2k+1}(X_{k, \C})^\perp=\Fil^{k+1}H_{\dR}^{2k+1}(X_{k, \C}),$$ and $$H_{\dR}^{2k+1}(X_{k, \C})=\Fil^{k+1}H_{\dR}^{2k+1}(X_{k, \C})\oplus \overline{\Fil^{k+1}H_{\dR}^{2k+1}(X_{k,\C})},$$ 
we see that the dual of $\Fil^{k+1} \epsilon_{X_k} H_{\dR}^{2k+1}(X_{k, \C})$ with respect to the de Rham pairing is its complex conjugate. By Proposition \ref{eqn:filr-para-recall}, $\epsilon_{X_k}\Pi_{k, \varphi}^*(\omega_f)$ is completely determined by the values
\[
\langle \epsilon_{X_k}\Pi_{k, \varphi}^*(\omega_f), \overline{\omega}_g \wedge \omega_A^j\eta_A^{k-j} \rangle_{X_k}, \quad \text{ for all eigenforms } g\in S_{k+2}(\Gamma_1(N)) \text{ and } 0\leq j\leq k. 
\]
Using the notations of \eqref{diag1} and properties of the de Rham pairing, we compute that
\begin{align*}
\langle \epsilon_{X_k}\Pi_{k, \varphi}^*(\omega_f), \overline{\omega}_g \wedge \omega_A^j\eta_A^{k-j} \rangle_{X_k} & =
\langle (\pi_{01})_*(\cl_{\dR}(\Pi_{k, \varphi}) \wedge \pi_2^*(\omega_f)), \overline{\omega}_g \wedge \omega_A^j\eta_A^{k-j} \rangle_{X_k}\\
& = \langle \cl_{\dR}(\Pi_{k, \varphi}) \wedge \pi_2^*(\omega_f), \pi_{01}^*(\overline{\omega}_g \wedge \omega_A^j\eta_A^{k-j}) \rangle_{Z_k} \\
& =- \langle \cl_{\dR}(\Pi_{k, \varphi}), \pi_{0}^*(\overline{\omega}_g) \wedge \pi_1^*(\omega_A^j\eta_A^{k-j}) \wedge \pi_2^*(\omega_f) \rangle_{Z_k} \\
& =- \langle (\Psi_{k,\varphi})_*(\cl_{\dR}(W_k\times A^r)), \pi_{0}^*(\overline{\omega}_g) \wedge \pi_1^*(\omega_A^j\eta_A^{k-j}) \wedge \pi_2^*(\omega_f) \rangle_{Z_k} \\
& =- \langle \cl_{\dR}(W_k\times A^r), (\Psi_{k,\varphi})^*(\pi_{0}^*(\overline{\omega}_g) \wedge \pi_1^*(\omega_A^j\eta_A^{k-j}) \wedge \pi_2^*(\omega_f)) \rangle_{Z_k} \\
& = \langle \omega_f\wedge \overline{\omega}_g, ((\id_A, [d_\varphi \sqrt{-d_K}])^r)^*(\omega_A^j\eta_A^{k-j}) ) \rangle_{W_k\times A^r} \\
& = \langle\omega_f, \overline{\omega}_g\rangle_{W_k} \left( \int_{A^r} ((\id_A, [d_\varphi \sqrt{-d_K}])^r)^*(\omega_A^j\eta_A^{k-j}) \right).
\end{align*}
Observe that $((\id_A, [d_\varphi \sqrt{-d_K}])^r)^*(\pr_1^*\varpi_{1, I}\wedge \ldots \wedge \pr_k^*\varpi_{k, I})\neq 0$ if and only if $\varpi_{2l-1, I}\neq \varpi_{2l, I}$ for all $1\leq l\leq r$. In particular $j$ must equal $r$, and there are $2^r$ sets $I$ of length $r$ that satisfy this condition. For such a set $I$, observe for all $1\leq l\leq r$ that 
$$(\id_A, [d_\varphi \sqrt{-d_K}])^*(\pr_{2l-1}^*(\varpi_{2l-1})\wedge \pr_{2l}^*(\varpi_{2l, I}))=-d_{\varphi}\sqrt{-d_K}\pr_l^*(\omega_A \wedge \eta_A).$$ 
From the defining equation \eqref{omegaA}, we see that $((\id_A, [d_\varphi \sqrt{-d_K}])^r)^*(\omega_A^j\eta_A^{k-j})=0$ for $j\neq r$, and
\[
((\id_A, [d_\varphi \sqrt{-d_K}])^r)^*(\omega_A^r\eta_A^{r})=
(-d_{\varphi}\sqrt{-d_K})^r\frac{2^r(r!)^2}{(2r)!}\pr_1^*(\omega_A \wedge \eta_A)\wedge \ldots \wedge \pr_r^*(\omega_A \wedge \eta_A).
\]
This shows that for any eigenform $g$ and any $0\leq j\leq k$, we have 
 \begin{equation}\label{omegaf}
 \langle \epsilon_{X_k}\Pi_{k, \varphi}^*(\omega_f), \overline{\omega}_g \wedge \omega_A^j\eta_A^{k-j} \rangle_{X_k} = (-2d_\varphi\sqrt{-d_K})^r \frac{(r!)^2}{(2r)!} \langle \omega_f, \overline{\omega}_g\rangle_{W_k} \delta_{jr},
 \end{equation}
 where $\delta_{jr}=1$ if $j=r$ and $0$ otherwise.
 
 Observe from \eqref{duality} that 
 \[
  \langle \omega_f\wedge \omega_A^r\eta_A^r, \overline{\omega}_g \wedge \omega_A^j\eta_A^{k-j} \rangle_{X_k}=\langle \omega_f, \overline{\omega}_g\rangle_{W_k} \langle \omega_A^r\eta_A^r, \omega_A^j\eta_A^{k-j}\rangle_{A^k}=(-1)^r\frac{(r!)^2}{(2r)!} \langle \omega_f, \overline{\omega}_g\rangle_{W_k}\delta_{jr}. 
 \]
 The result follows by comparing with \eqref{omegaf}.
 \end{proof}
 
 \begin{proof}[Proof of Theorem \ref{thm:CAJ}]
Using Lemma \ref{functoriality} and the equality $\AJ_{X_k}=\tilde{\epsilon}_{X_k} \circ \AJ_{X_k}^{k+1}$, we see that 
 \[
 \AJ_{W_k}((2^k k!)^2 d_{\varphi}^{r}\tilde{\Delta}_{\varphi}^{\HC})(\omega_f)=\AJ^{k+1}_{X_k}(\tilde{\Delta}_\varphi^{\GHC})(\tilde{\epsilon}_{X_k}\Pi_{k, \varphi}^*(\omega_f)) \pmod {L_k}.
 \]
Let $\omega_A\in H^{1,0}(A_\C)$ be the non-zero differential form given by $\omega_A=\varphi^*(2\pi i dw)$. By Lemma \ref{pullback}, we obtain 
 \begin{equation}\label{CAJHC}
\AJ_{W_k}((2^k k!)^2 d_{\varphi}^{r}\tilde{\Delta}_{\varphi}^{\HC})(\omega_f)=m_{k,k}(2d_{\varphi}\sqrt{-d_K})^r\AJ^{k+1}_{X_k}(\tilde{\Delta}_\varphi^{\GHC})(\omega_f\wedge \omega_A^r\eta_A^r) \pmod {L_k}.
 \end{equation}
The result follows by applying \cite[Theorem 1]{bdlp}, remembering that $\tilde{\Delta}_\varphi^{\GHC}=m_{k,k}\Delta_\varphi^{\GHC}$.
 \end{proof}

\section{Bloch's map on torsion cycles}\label{s:bloch}

In this section, we recall the existence and properties of an \'etale cycle class map defined on torsion cycles first considered by Bloch \cite{bloch1}. Its restriction to null-homologous cycles admits a comparison with the complex Abel--Jacobi map restricted to torsion cycles. As we will show, it follows that it factors through algebraic equivalence when composed with the correspondence $\tilde{\epsilon}_{W_k}$ of Definition \ref{rem:intcycles}. The resulting composition map plays a key role in the proof of Theorem \ref{intro:main:thm} in Section \ref{s:gr}. For a more complete account of Bloch's map, we refer the reader to \cite[\S 1.5.2]{mythesis}. 

\subsection{Basic properties}

Let $V$ denote a smooth projective variety of dimension $d$ defined over a number field $F$ and let $\ell$ denote a fixed prime. 
For all non-negative integers $n, j$ and $\nu$,
we use the convention 
$
H^n_{\et}(V_{\bar{F}}, \Z/\ell^\nu\Z(j)):=H_{\et}^n(V_{\bar{F}}, \mu_{\ell^\nu}^{\otimes j}),
$
where  $\mu_{\ell^\nu}$ is the \'etale sheaf of $\ell^\nu$-th roots of unity.
There are natural maps 
\begin{equation}\label{map:etlim}
H^n_{\et}(V_{\bar{F}}, \Z/\ell^{\nu}\Z(j))\lra H^n_{\et}(V_{\bar{F}}, \Z/\ell^{\nu+1}\Z(j))
\end{equation}
induced by the maps $\Z/\ell^{\nu}\Z\hookrightarrow \Z/\ell^{\nu+1}\Z$ sending $m\mapsto \ell m$ or by the inclusions $\mu_{\ell^{\nu}}\hookrightarrow \mu_{\ell^{\nu+1}}$. By taking the direct limit over $\nu$, we obtain the cohomology groups of $V$ with $\ell$-torsion coefficients
\begin{equation}\label{def:torscoho}
H^n_{\et}(V_{\bar{F}}, \Q_\ell/\Z_\ell(j)):= \lim_{\longrightarrow} H^n_{\et}(V_{\bar{F}}, \Z/\ell^{\nu}\Z(j)). 
\end{equation}
Viewing $\Q_\ell/\Z_\ell$ as a torsion \'etale sheaf on $V$, there is a natural isomorphism 
\begin{equation}
H^n_{\et}(V_{\bar{F}}, \Q_\ell/\Z_\ell)\otimes _{\Q_\ell/\Z_\ell} \Q_\ell/\Z_\ell(j) \simeq H^n_{\et}(V_{\bar{F}}, \Q_\ell/\Z_\ell(j))
\end{equation}
where the right hand side cohomology group is defined by \eqref{def:torscoho}.

Let $\CH^j(V_{\bar F})[\ell^\infty]$ denote the power-of-$\ell$ torsion subgroup of the Chow group. Bloch has defined in \cite{bloch1} a map 
\begin{equation}\label{blochmap}
\lambda_{V,\ell}^j : \CH^j(V_{\bar F})[\ell^\infty]\lra H^{2j-1}_{\et}(V_{\bar{F}}, \Q_\ell/\Z_\ell(j)).
\end{equation}
whose restriction to null-homologous cycles can be regarded as an arithmetic avatar of the complex Abel--Jacobi map on torsion (see Section \ref{s:blochcomp} below for a precise statement). 
The construction of the map is rather involved, and we therefore refer the reader to the original \cite[\S 2]{bloch1}, or alternatively to \cite[Appendix A]{ACMV}. 




\begin{proposition}\label{prop:bloch}
The Bloch map (\ref{blochmap}) is functorial with respect to correspondences and $\Gal(\bar{F}/F)$-equivariant.
\end{proposition}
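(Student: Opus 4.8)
The plan is to reduce the statement to the naturality of the ingredients out of which the Bloch map is built, and then assemble. Recall that $\lambda_{V,\ell}^j$ is obtained, by passing to the direct limit over $\nu$, from compatible finite-level maps $\lambda_{V,\ell,\nu}^j : \CH^j(V_{\bar F})[\ell^\nu]\lra H^{2j-1}_{\et}(V_{\bar F}, \Z/\ell^\nu\Z(j))$, each of which is manufactured from the mod-$\ell^\nu$ cycle class map together with the connecting homomorphism of a Kummer-type short exact sequence of coefficient sheaves (see \cite[\S 2]{bloch1} and \cite[Appendix A]{ACMV}). Since the transition maps \eqref{map:etlim} and the inclusions $\CH^j(V_{\bar F})[\ell^\nu]\hookrightarrow \CH^j(V_{\bar F})[\ell^{\nu+1}]$ are visibly compatible with the $\lambda_{V,\ell,\nu}^j$, it suffices to verify the two naturality assertions at each finite level $\nu$ and then pass to the limit. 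The first step is to record that every piece of the construction of $\lambda_{V,\ell,\nu}^j$ — the mod-$\ell^\nu$ cycle class map, the connecting (Bockstein) map, and the cup and Gysin products — is functorial for the three elementary operations out of which the action of a correspondence is assembled: flat (more generally, lci) pull-back, proper push-forward (with the Tate-twist bookkeeping supplied by purity and the normalisation of the trace map), and intersection with an algebraic cycle class.

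Granting this, functoriality with respect to correspondences follows formally. Writing $\Gamma\in \corr^{s}(X,Y)$ as $\Gamma_* = q_*\big(\Gamma\cdot p^*(-)\big)$ with $p\colon X\times Y\to X$ and $q\colon X\times Y\to Y$ the projections, each of the three constituent operations intertwines the Bloch map of the source with that of the target, hence so does their composite $\Gamma_*$. In the concrete situation used later in the paper the relevant correspondence $\tilde{\epsilon}_{W_k}$ is an integral linear combination of graphs of automorphisms of $W_k$, so push-forward and pull-back reduce to the evident action of $\Lambda_k$ and the claim is immediate; the general case costs nothing more.

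For $\Gal(\bar F/F)$-equivariance, the argument runs as follows. Because $V$ is defined over $F$, an element $\sigma\in \Gal(\bar F/F)$ induces an automorphism of the scheme $V_{\bar F}$ covering the automorphism of $\spec\bar F$ attached to $\sigma$. All the data entering the construction of $\lambda_{V,\ell}^j$ — the étale cohomology of $V_{\bar F}$ with its constant Tate-twisted coefficients, the cycle class maps, the connecting maps, and the direct limit — are functorial for such automorphisms, and the operations they induce on $\CH^j(V_{\bar F})[\ell^\infty]$ and on $H^{2j-1}_{\et}(V_{\bar F},\Q_\ell/\Z_\ell(j))$ are exactly the Galois actions appearing in the statement; hence $\sigma$ commutes with $\lambda_{V,\ell}^j$.

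The one genuinely delicate point is the compatibility of $\lambda$ with proper push-forward — equivalently, with the Gysin maps occurring in the intersection-theoretic description of a correspondence — since Bloch's original definition proceeds through the coniveau/localisation machinery rather than an explicit formula, so this naturality is not purely formal. This is precisely what is made precise in \cite[Appendix A]{ACMV} (compare also \cite[\S 1.5.2]{mythesis}), and I would simply invoke those results; the remaining compatibilities (flat pull-back, cup product with a cycle class, and passage to the limit) are standard.
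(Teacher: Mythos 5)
Your proposal is correct and in substance coincides with the paper's proof, which consists of two citations: functoriality with respect to correspondences is \cite[Proposition 3.5]{bloch1}, and Galois equivariance is \cite[Proposition A.22]{ACMV}. Your finite-level reduction, the assembly of a correspondence from pull-back, intersection with a cycle class, and proper push-forward, and the appeal to \cite[Appendix A]{ACMV} for the delicate Gysin/push-forward compatibility are exactly the content packaged in those references, so nothing further is needed.
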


\begin{proof}
Functoriality for correspondences is \cite[Proposition 3.5]{bloch1}. The Galois equivariance is \cite[Proposition A.22]{ACMV}. 
\end{proof}

\subsection{Comparison with the complex Abel--Jacobi map}\label{s:blochcomp}

Using the description \eqref{PDinterjac} of the intermediate Jacobian along with the natural isomorphism of $\R$-vector spaces
\begin{equation}\label{iso:R}
H^{2j-1}(V_{\C}(\C), \R)\simeq H^{2j-1}(V_{\C}(\C), \C)/\Fil^{j} H_{\dR}^{2j-1}(V_{\C}),
\end{equation}
there is an identification of real tori
\begin{equation*}
J^j(V_\C)\simeq H^{2j-1}(V_{\C}(\C), \R)/ H^{2r-1}(V_{\C}(\C), \Z),
\end{equation*}
and thus an identification
\begin{equation}\label{jactors}  
J^{j}(V_{\C})_{\tors} \simeq H^{2j-1}(V_{\C}(\C), \Q)/ H^{2j-1}(V_{\C}(\C), \Z).
\end{equation}
From the long exact sequence in singular cohomology associated to the short exact sequence 
\begin{equation}
0\lra \Z\lra\Q\lra\Q/\Z\lra 0
\end{equation}
we deduce a short exact sequence
\begin{equation}\label{map:u}
0\lra J^{j}(V_{\C})_{\tors}\overset{u}{\lra} H^{2j-1}(V_{\C}(\C), \Q/\Z)\lra H^{2j}(V_{\C}(\C), \Z)_{\tors}\lra 0,
\end{equation}
thus identifying $J^{j}(V_{\C})_{\tors}$ up to a finite group with $H^{2j-1}(V_{\C}(\C), \Q/\Z)$.

Composing the complex Abel--Jacobi map \eqref{map:CAJ} restricted to torsion with $u$ yields a map
\begin{equation}\label{AJcomplex}  
u\circ \AJ_V^j : \CH^{j}(V_{\C})_0[\ell^\infty]\lra H^{2j-1}(V_{\C}(\C), \Q_\ell/\Z_\ell).
\end{equation}

For each natural number $\nu$, there is a sequence of isomorphisms
\begin{equation}\label{iso:mu}
H^{2j-1}_{\et}(V_{\bar{F}}, \mu_{\ell^\nu}^{\otimes j})\simeq  H^{2j-1}_{\et}(V_{\C}, \mu_{\ell^\nu}^{\otimes j})\simeq H^{2j-1}(V_\C(\C), \mu_{\ell^\nu}^{\otimes j}).
\end{equation}
For the first isomorphism, apply \cite[VI Corollary 4.3]{milne} with respect to the complex embedding $\bar{F}\hookrightarrow \C$ fixed in Section \ref{s:convention}. The second isomorphism is an application of \cite[III Theorem 3.12]{milne}. Taking direct limits over $\nu$, we obtain a sequence of isomorphisms
\begin{equation}\label{map:comp}
\comp : H^{2j-1}_{\et}(V_{\bar{F}}, \Q_\ell/\Z_\ell (j)) \simeq H^{2j-1}_{\et}(V_{\C}, \Q_\ell/\Z_\ell (j)) 
 \simeq H^{2j-1}(V_{\C}(\C), \Q_\ell/\Z_\ell (j)).
\end{equation}

\begin{proposition}\label{prop:blochcomp}
If we identify $\Q_\ell/\Z_\ell\simeq \Q_\ell/\Z_\ell(j)$ by taking $e^{\frac{2\pi i}{\ell^\nu}}$ as the generator of the $\ell^\nu$-th roots of unity, then the diagram 
\begin{equation}\label{diag:blochcomp}
\begin{tikzcd}
\CH^j(V_{\bar F})_0[\ell^\infty] \arrow{r}{\lambda_\ell^r} \arrow{d}
& H_{\et}^{2j-1}(V_{\bar{F}}, \Q_\ell/\Z_\ell(j)) \arrow{d}{\comp}[swap]{\wr} \\
\CH^j(V_{\C})_0[\ell^\infty] \arrow{r}{u\circ \AJ^j_V} 
& H^{2j-1}(V_{\C}(\C), \Q_\ell/\Z_\ell)
\end{tikzcd}
\end{equation}
commutes, where the vertical left map is induced by the fixed embedding $\bar F \hookrightarrow \C$. 
\end{proposition}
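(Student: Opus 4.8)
The plan is to verify commutativity of \eqref{diag:blochcomp} by reducing, via the direct limit defining $\Q_\ell/\Z_\ell$-coefficients, to a statement at finite level $\ell^\nu$, and then to appeal to the known compatibility of Bloch's map with the classical cycle class map in singular and \'etale cohomology with finite coefficients. More precisely, I would first recall that both horizontal arrows are, by construction, compatible with the transition maps \eqref{map:etlim} and the analogous ones in singular cohomology, so it suffices to prove commutativity of the diagram obtained after restricting source groups to $\ell^\nu$-torsion and replacing the targets by $H^{2j-1}_{\et}(V_{\bar F},\mu_{\ell^\nu}^{\otimes j})$ and $H^{2j-1}(V_\C(\C),\Z/\ell^\nu\Z)$ respectively, with the right vertical map the finite-level comparison isomorphism from \eqref{iso:mu}.

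At finite level, the key input is Bloch's original description of $\lambda^j_{V,\ell}$ on $\ell^\nu$-torsion cycles: for a cycle $Z$ killed by $\ell^\nu$, one writes $\ell^\nu Z = \operatorname{div}(W)$ (in the appropriate sense on a suitable resolution/the Bloch--Ogus setup), and $\lambda^j_{V,\ell}(Z)$ is the image of the class of $W$ under the connecting homomorphism in the Bockstein sequence associated to $0\to\mu_{\ell^\nu}^{\otimes j}\to \ ?\ \to ?\to 0$ (equivalently, $\lambda$ is characterized by a compatibility with the integral-coefficient cycle class map and the Bockstein). The complex Abel--Jacobi map restricted to $\ell^\nu$-torsion admits, via \eqref{jactors} and \eqref{map:u}, exactly the same Bockstein description: if $\ell^\nu Z$ bounds a chain $\Gamma$ in $V_\C(\C)$, then $u\circ\AJ^j_V(Z)$ is the class of $\tfrac{1}{\ell^\nu}\partial^{-1}(\ell^\nu Z)$ in $H^{2j-1}(V_\C(\C),\Q/\Z)$, which is the image of the integral cycle class of $\Gamma$ under the Bockstein for $0\to\Z\to\Z\to\Z/\ell^\nu\Z\to 0$. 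So the commutativity comes down to: (i) the \'etale integral cycle class map and the topological one agree under $\comp$ (this is the standard comparison of cycle classes, e.g.\ \cite[VI]{milne}, already invoked in \eqref{iso:mu}); and (ii) the identification $\Q_\ell/\Z_\ell\simeq\Q_\ell/\Z_\ell(j)$ via $e^{2\pi i/\ell^\nu}$ matches the two Bockstein sequences, i.e.\ the Tate twist is accounted for precisely by this choice of root of unity. Point (ii) is exactly the normalization built into the statement, and it is what makes the diagram commute on the nose rather than up to a unit.

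I would then assemble these: fix $Z\in\CH^j(V_{\bar F})_0[\ell^\nu]$, transport it to $\CH^j(V_\C)_0[\ell^\nu]$ via the left vertical map, chase it around both ways using the two Bockstein descriptions, and invoke the functoriality/compatibility of the cycle class maps with the comparison isomorphism together with the chosen identification $\Q_\ell/\Z_\ell\simeq\Q_\ell/\Z_\ell(j)$. Taking the direct limit over $\nu$ then yields commutativity of \eqref{diag:blochcomp} as stated. For a cleaner write-up one can instead cite the fact that this comparison is established in the literature: it is essentially \cite[Proposition A.22]{ACMV} combined with the comparison isomorphisms \eqref{iso:mu}, or one may refer to \cite[\S 1.5.2]{mythesis} where the statement is recorded in the form needed here.

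The main obstacle is bookkeeping rather than conceptual: one must check that the sign/normalization conventions in Bloch's definition of $\lambda^j_{V,\ell}$, the definition of the complex Abel--Jacobi map via $\partial^{-1}$, and the identification $\Q_\ell/\Z_\ell(j)\simeq\Q_\ell/\Z_\ell$ via $e^{2\pi i/\ell^\nu}$ are mutually consistent, so that the diagram genuinely commutes and not just up to an automorphism of $\Q_\ell/\Z_\ell$. Since all three are pinned down by their compatibility with the respective integral cycle class maps, the Tate twist is the only place a discrepancy could enter, and the hypothesis on the choice of root of unity is precisely designed to eliminate it; I would therefore spend the bulk of the argument making that matching explicit and otherwise lean on the cited comparison theorems.
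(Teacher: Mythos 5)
Your proposal is correct in substance, but it does considerably more work than the paper, whose entire proof is the citation ``This is \cite[Proposition 3.7]{bloch1}'': the comparison of Bloch's map with the complex Abel--Jacobi map on $\ell$-power torsion, including the normalisation of the Tate twist by the choice of $e^{2\pi i/\ell^\nu}$, is exactly Bloch's original result, and the paper simply invokes it. Your sketch --- reduction to finite level $\ell^\nu$, the Bockstein-type descriptions of $\lambda^j_{V,\ell}$ and of $u\circ\AJ^j_V$ on $\ell^\nu$-torsion, compatibility of the \'etale and topological cycle class maps under the comparison isomorphism \eqref{iso:mu}, and the observation that the hypothesis on the root of unity is precisely what matches the two Bockstein sequences --- is a faithful reconstruction of how that proposition is proved, so nothing is wrong; it just amounts to re-proving the cited result. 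One bibliographic correction: the reference you propose as the clean citation, \cite[Proposition A.22]{ACMV}, is used in this paper for the Galois equivariance of Bloch's map (Proposition \ref{prop:bloch}), not for the comparison with the Abel--Jacobi map; the appropriate source for the commutativity of \eqref{diag:blochcomp} is \cite[Proposition 3.7]{bloch1} (with \cite[\S 1.5.2]{mythesis} serving only as an expository account). If you keep the direct argument, the one point deserving genuine care is the one you already flag: Bloch's map is constructed via the Gersten/Bloch--Ogus machinery rather than literally by writing $\ell^\nu Z=\operatorname{div}(W)$, so the finite-level Bockstein description you use must be quoted in that framework rather than asserted naively.
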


\begin{proof}
This is \cite[Proposition 3.7]{bloch1}.
\end{proof}

\subsection{Factorisation through algebraic equivalence}

Let $k=2r\geq 2$ be an even integer. We now specialise to the case where $V$ is the Kuga--Sato variety $W_k$ of level $\Gamma_1(N)$  defined over $\Q$.
Recall the idempotent correspondence $\epsilon_{W_k}$ defined in \eqref{def:eps} and its normalisation $\tilde{\epsilon}_{W_k}$ defined in Definition \ref{rem:intcycles}, along with the fact that the composition of the complex Abel--Jacobi map with this correspondence can be viewed as a map \eqref{epsAJ}
\[
\AJ_{W_k}=\tilde{\epsilon}_{W_k}\circ \AJ_{W_k}^{r+1}: \CH^{r+1}(W_{k,\C})_0 \lra \frac{S_{k+2}(\Gamma_1(N))^\vee}{L_k}.
\]
\begin{proposition}\label{lem:gr}
The map $\AJ_{W_k}$ factors through algebraic equivalence, giving rise to a map
\[
\AJ_{W_k} : \Gr^{r+1}(W_{k,\C})\lra \frac{S_{k+2}(\Gamma_1(N))^\vee}{L_k}.
\]
\end{proposition}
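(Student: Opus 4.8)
The plan is to use the compatibility between Bloch's map and the complex Abel--Jacobi map on torsion cycles, together with the finiteness of the relevant étale cohomology groups and the fact that algebraically trivial cycles have a well-behaved image under Bloch's map. First I would recall that, since $\Gr^{r+1}(W_{k,\C}) = \CH^{r+1}(W_{k,\C})_0 / \CH^{r+1}(W_{k,\C})_{\mathrm{alg}}$, where $\CH^{r+1}(W_{k,\C})_{\mathrm{alg}}$ denotes the subgroup of cycles algebraically equivalent to zero, it suffices to show that $\AJ_{W_k}$ annihilates $\tilde\epsilon_{W_k}\,\CH^{r+1}(W_{k,\C})_{\mathrm{alg}}$. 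A cycle algebraically equivalent to zero is, by definition, parametrised by a smooth connected curve $T$: there is a cycle $\mathcal{Z}$ on $T \times W_{k,\C}$ and points $t_0, t_1 \in T(\C)$ with $Z = \mathcal{Z}_{t_1} - \mathcal{Z}_{t_0}$. The image $\AJ_{W_k}(Z)$ then depends continuously on $t_1$ as it varies in $T(\C)$, so the induced map $T(\C) \to S_{k+2}(\Gamma_1(N))^\vee / L_k$ is a holomorphic (in fact real-analytic) map from a connected complex variety to a complex torus; by functoriality of the Abel--Jacobi map with respect to correspondences \cite[Propositions 1, 2 \& 4 iii)]{zucker}, it factors through the Abel--Jacobi map of $\Jac(T)$ composed with a morphism of complex tori, hence its image is a subtorus (a translate of an abelian subvariety).

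The key point is then to show this subtorus is trivial, i.e.\ that the map is constant. Here I would invoke the Bloch map side of the picture. On torsion cycles, Proposition \ref{prop:blochcomp} identifies (up to the finite-kernel map $u$ and the comparison isomorphism $\comp$) the complex Abel--Jacobi map with Bloch's map $\lambda_{W_k,\ell}^{r+1}$, which by Proposition \ref{prop:bloch} is functorial for correspondences and Galois-equivariant. The composition $\tilde\epsilon_{W_k}\circ \lambda_{W_k,\ell}^{r+1}$ lands in $\tilde\epsilon_{W_k} H^{2r+1}_{\et}(W_{k,\bar F}, \Q_\ell/\Z_\ell(r+1))$; the finiteness result proved in Section \ref{s:finiteness} (which I am allowed to assume is available, as it is announced in the outline) shows this group is finite. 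A cycle $Z$ algebraically trivial is in particular a cycle whose class in the torsion-free quotient of the Chow group is torsion after passing to a suitable level — more precisely, the algebraically trivial cycles map into the divisible part, and standard arguments (Bloch--Srinivas type) show that the torsion part of $\AJ_{W_k}$ on algebraically trivial cycles maps into the image of a finite group. Combined with the continuity/subtorus conclusion from the first paragraph: a subtorus of $S_{k+2}(\Gamma_1(N))^\vee/L_k$ whose torsion points all lie in a fixed finite set must be a point, hence $\AJ_{W_k}$ is constant on each algebraic family, proving the factorisation.

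Concretely, the cleanest route avoiding delicate divisibility arguments is: (i) show via the Zucker functoriality and the Abel's-theorem-for-families argument that $\AJ_{W_k}(\CH^{r+1}(W_{k,\C})_{\mathrm{alg}})$ is a subtorus $\mathcal{S}$ of $S_{k+2}(\Gamma_1(N))^\vee/L_k$; (ii) observe that any torsion point of $\mathcal{S}$ arises from a torsion cycle in $\CH^{r+1}(W_{k,\C})_0[\ell^\infty]$ algebraically equivalent to zero, whose image under $u\circ\AJ_{W_k}^{r+1}$ equals, via $\comp$, the image of $\tilde\epsilon_{W_k}\circ\lambda_{W_k,\ell}^{r+1}$ applied to a cycle over $\bar{F}$ — here one uses that $W_k$ is defined over $\Q$ and a specialisation/spreading-out argument to descend the torsion algebraically trivial cycle to a number field, as in Schoen's original argument; (iii) conclude that these torsion points lie in the finite group $\tilde\epsilon_{W_k} H^{2r+1}_{\et}(W_{k,\bar\Q}, \Q_\ell/\Z_\ell(r+1))$, for every $\ell$; (iv) since a nontrivial subtorus has infinitely many $\ell$-power torsion points for each $\ell$, $\mathcal{S}$ must be trivial.

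The main obstacle I anticipate is step (ii)--(iii): making rigorous the passage from a torsion cycle in the \emph{complex} Chow group that is algebraically equivalent to zero, to a torsion cycle defined over a number field to which Bloch's map (an object defined over $\bar F$) genuinely applies, and checking that algebraic triviality is preserved under this descent so that the Galois-equivariance and the finiteness of étale cohomology can be brought to bear. This is precisely the spreading-out/specialisation argument at the heart of Schoen's method, and it must be carried out with care; everything else (the subtorus statement, and the numerical fact that a positive-dimensional complex torus has infinitely many $\ell$-torsion points) is routine.
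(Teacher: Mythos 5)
Your strategy diverges from the paper's and contains two genuine gaps. First, the finiteness you invoke does not exist: Proposition \ref{prop:finite} only asserts that the $\Gal(\bar\Q/F_\infty)$-\emph{invariants} of $H^{k+1}_{\et}(W_{k,\bar\Q},\Q/\Z(r+1))$ form a finite group, whereas the group $\tilde{\epsilon}_{W_k}H^{k+1}_{\et}(W_{k,\bar\Q},\Q_\ell/\Z_\ell(r+1))$ that actually receives $\tilde{\epsilon}_{W_k}\circ\lambda^{r+1}_{W_k,\ell}$ is divisible of positive corank (essentially $2\dim_\C S_{k+2}(\Gamma_1(N))$ copies of $\Q_\ell/\Z_\ell$), hence infinite. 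To extract finiteness you would need Galois invariance over a controlled field; but an algebraically trivial torsion cycle on $W_{k,\C}$ only descends, after spreading out and specialisation, to \emph{some} number field, with no reason to be related to $F_\infty$, so the result of Section \ref{s:finiteness} does not apply and you would have to prove a separate finiteness statement and carry out the specialisation argument you yourself flag as the main obstacle. Second, step (ii) is unjustified even granting all of that: a torsion point of $\mathcal{S}=\AJ_{W_k}(\CH^{r+1}(W_{k,\C})_{\alg})$ is of the form $\AJ_{W_k}(Z)$ with $n\,\AJ_{W_k}(Z)=0$, which does not make $Z$, or any preimage of that point, a torsion cycle; divisibility of $\CH^{r+1}(W_{k,\C})_{\alg}$ does not allow lifting torsion through a surjection (compare $\Q\twoheadrightarrow\Q/\Z$), so the reduction to $\CH^{r+1}(W_{k,\C})_0[\ell^\infty]$ is not available.

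The paper's proof is a short, purely Hodge-theoretic argument requiring none of this arithmetic input: by \cite[\S 12.2.2]{voisin}, the image of $\CH^{r+1}(W_{k,\C})_{\alg}$ under $\AJ^{r+1}_{W_k}$ lies in the subtorus $J(T_\Z)=T_\C/(\Fil^{r+1}T_\C\oplus T_\Z)$ of $J^{r+1}(W_{k,\C})$, where $T_\Z\subset H^{k+1}(W_{k,\C}(\C),\Z)$ is the largest integral sub-Hodge structure of type $(r+1,r)+(r,r+1)$; since by Proposition \ref{prop:epsiloncusp} the $\epsilon_{W_k}$-part of the cohomology is pure of type $(k+1,0)+(0,k+1)$, the projector annihilates $H^{r+1,r}\oplus H^{r,r+1}$, so $\tilde{\epsilon}_{W_k}(T_\C)=0$ and therefore $\AJ_{W_k}=\tilde{\epsilon}_{W_k}\circ\AJ^{r+1}_{W_k}$ kills every algebraically trivial cycle. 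Your first step (the image of the algebraically trivial cycles being controlled by the algebraic part of the intermediate Jacobian) is the correct starting point; the paper finishes by killing that subtorus with Scholl's projector rather than by a torsion-counting argument, and this is where your proposal should be redirected. Bloch's map and the finiteness of Section \ref{s:finiteness} enter the paper only later, in Proposition \ref{lem:grbloch} and Corollary \ref{coro:finite}, where the cycles in question are genuinely torsion and defined over the fields $F_n\subset F_\infty$.
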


\begin{proof}
Let $\CH^{r+1}(W_{k,\C})_{\alg}$ denote the subgroup of $\CH^{r+1}(W_{k,\C})_0$ consisting of algebraically trivial cycles. The image of $\CH^{r+1}(W_{k,\C})_{\alg}$ under $\AJ_{W_k}^{r+1}$ lies in an abelian subvariety $J^{r+1}(W_{k,\C})_{\alg}$ of $J^{r+1}(W_{k,\C})$ whose cotangent space is contained in $H^{r+1, r}(W_{k,\C})$ \cite[\S 12.2.2]{voisin}. More precisely, if $T_\Z$ denotes the largest integral sub-Hodge structure of $H^{k+1}(W_{k,\C}(\C),\Z)$ of type $(r+1, r)+(r, r+1)$, then 
\[
\AJ_{W_k}^{r+1}(\CH^{r+1}(W_{k,\C})_{\alg})\subset J^{r+1}(W_{k,\C})_{\alg}=J(T_\Z):=T_\C/(\Fil^{r+1} T_\C \oplus T_\Z)\subset J^{r+1}(W_{k,\C}). 
\]
Recall that $\AJ_{W_k}=\tilde{\epsilon}_{W_k}\circ \AJ_{W_k}^{r+1}$ by definition.
Thus, in order to prove the proposition, it suffices to show that the map on complex tori $\tilde{\epsilon}_{W_k} : J^{r+1}(W_{k,\C})\lra J^{r+1}(W_{k,\C})$ restricts to the zero map on $J^{r+1}(W_{k,\C})_{\alg}$. This restriction is completely determined by the restriction of the map of Hodge structures $\tilde{\epsilon}_{W_k} : H^{k+1}(W_{k,\C}(\C),\Z)\lra H^{k+1}(W_{k,\C}(\C),\Z)$ to $T_\Z$.
By Proposition \ref{prop:epsiloncusp}, the motive $(W_k, \epsilon_{W_k}, 0)$ with rational coefficients is of pure Hodge type $(k+1, 0)+(0, k+1)$, and in particular 
\[
\epsilon_{W_k}(H^{r+1, r}(W_k)\oplus H^{r, r+1}(W_k))=0.
\]
It follows that $\tilde{\epsilon}_{W_k}(T_\C)=0$ and thus $\tilde{\epsilon}_{W_k}(T_\Z)=0$.
\end{proof}

Taking the direct sum of the $\ell$-adic Bloch maps \eqref{blochmap} over all primes $\ell$ yields a map
\begin{equation}\label{map:bloch}
\lambda_{W_k}^{r+1} : \CH^{r+1}(W_{k,\bar \Q})_{\tors} \lra H^{k+1}_{\et}(W_{k,\bar \Q}, \Q/\Z(r+1)),
\end{equation}
which is functorial with respect to correspondences by Proposition \ref{prop:bloch}. In view of evaluating this map on Heegner cycles, it thus suffices to consider its composition with $\tilde{\epsilon}_{W_k}$. Restricting to null-homologous cycles and composing with $\tilde{\epsilon}_{W_k}$ yields a map 
\begin{equation}\label{lambda0}
\lambda_{W_k}^{\circ} : \CH^{r+1}(W_{k,\bar \Q})_{0, \tors} \lra  \tilde{\epsilon}_{W_k} H^{k+1}_{\et}(W_{k,\bar \Q}, \Q/\Z(r+1))\subset H^{k+1}_{\et}(W_{k,\bar \Q}, \Q/\Z(r+1)).
\end{equation}
 
 \begin{proposition}\label{lem:grbloch}
The map \eqref{lambda0} factors through algebraic equivalence, giving rise to a map
\[
\lambda^{\circ}_{W_k} : \Gr^{r+1}(W_{k,\bar \Q})\lra  H^{k+1}_{\et}(W_{k,\bar \Q}, \Q/\Z(r+1)).
\]
\end{proposition}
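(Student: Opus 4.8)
The plan is to mirror the argument of Proposition \ref{lem:gr}, transporting it across the comparison isomorphism of Proposition \ref{prop:blochcomp} so that what was a Hodge-theoretic vanishing becomes a statement about the $\Gal(\bar\Q/\Q)$-module $\tilde{\epsilon}_{W_k}H^{k+1}_{\et}(W_{k,\bar\Q},\Q/\Z(r+1))$. First I would recall that, as in the proof of Proposition \ref{lem:gr}, the subgroup $\CH^{r+1}(W_{k,\bar\Q})_{\alg}$ of algebraically trivial cycles has image under $\lambda^{r+1}_{W_k}$ landing (after composition with $u$ and $\comp$) inside the subgroup $H^{k+1}(W_{k,\C}(\C),\Q/\Z)$ cut out by the largest sub-Hodge structure $T_\Z$ of type $(r+1,r)+(r,r+1)$; concretely, one uses that the $\ell$-adic Bloch map restricted to $\ell$-power torsion algebraically trivial cycles has image in the torsion of the abelian variety $J^{r+1}(W_{k,\C})_{\alg}=J(T_\Z)$, via the commutative diagram \eqref{diag:blochcomp} together with \eqref{map:u} and \eqref{jactors}. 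Thus it suffices to show that $\tilde{\epsilon}_{W_k}$ kills the relevant piece of étale cohomology, exactly as $\tilde{\epsilon}_{W_k}$ killed $T_\Z$ in the Betti realisation.

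The key step is then the following: by Proposition \ref{prop:epsiloncusp} (equivalently \cite[Lemma 2.2, Corollary 2.3]{bdp1}), the motive $(W_k,\epsilon_{W_k},0)$ is pure of Hodge type $(k+1,0)+(0,k+1)$, hence the Galois representation $\tilde{\epsilon}_{W_k}H^{k+1}_{\et}(W_{k,\bar\Q},\Q_\ell(r+1))$ has Hodge--Tate weights only $\{-r-1,r\}$ (equivalently, after the Tate twist, the extreme weights), so in particular it contains no sub- or quotient representation arising from a sub-Hodge structure of type $(r+1,r)+(r,r+1)$. More directly: under the comparison isomorphism $\comp$ of \eqref{map:comp}, the correspondence $\tilde{\epsilon}_{W_k}$ acts compatibly on $H^{k+1}_{\et}(W_{k,\bar\Q},\Q/\Z(r+1))$ and on $H^{k+1}(W_{k,\C}(\C),\Q/\Z(r+1))$ (functoriality of correspondences on cohomology, Proposition \ref{prop:bloch}, together with compatibility of étale and singular cycle classes). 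Since $\tilde{\epsilon}_{W_k}$ annihilates $H^{r+1,r}(W_k)\oplus H^{r,r+1}(W_k)$, and hence $T_\C$ and $T_\Z$, it annihilates the image of $T_\Z\otimes\Q/\Z$ inside $H^{k+1}(W_{k,\C}(\C),\Q/\Z(r+1))$ — which is precisely the subgroup containing $(\comp\circ\lambda^{r+1}_{W_k})(\CH^{r+1}(W_{k,\bar\Q})_{\alg,\tors})$. Transporting back via the isomorphism $\comp$, we conclude $\lambda^{\circ}_{W_k}$ vanishes on $\CH^{r+1}(W_{k,\bar\Q})_{\alg,\tors}$.

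To finish, I would note that $\lambda^{\circ}_{W_k}$ is by construction defined on $\CH^{r+1}(W_{k,\bar\Q})_{0,\tors}$, i.e.\ only on torsion null-homologous cycles, whereas the Griffiths group $\Gr^{r+1}(W_{k,\bar\Q})=\CH^{r+1}(W_{k,\bar\Q})_0/\CH^{r+1}(W_{k,\bar\Q})_{\alg}$ is not a priori torsion; however, the map $\lambda^{\circ}_{W_k}$ as stated is only claimed on the torsion part, so the factorisation asserted is: the restriction of $\lambda^{\circ}_{W_k}$ to $\CH^{r+1}(W_{k,\bar\Q})_{0,\tors}$ annihilates $\CH^{r+1}(W_{k,\bar\Q})_{\alg}\cap \CH^{r+1}(W_{k,\bar\Q})_{0,\tors}=\CH^{r+1}(W_{k,\bar\Q})_{\alg,\tors}$, hence descends to the torsion subgroup of $\Gr^{r+1}(W_{k,\bar\Q})$; one then reads the target as $H^{k+1}_{\et}(W_{k,\bar\Q},\Q/\Z(r+1))$ as in the statement. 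The main obstacle — really the only nontrivial point — is making rigorous the compatibility of the correspondence action $\tilde{\epsilon}_{W_k}$ with the comparison isomorphism $\comp$ and with Bloch's map, i.e.\ combining Propositions \ref{prop:bloch} and \ref{prop:blochcomp} so that the Hodge-theoretic vanishing of Proposition \ref{lem:gr} genuinely propagates to the étale side; once that is in hand, the vanishing is immediate from the purity of Scholl's motive.
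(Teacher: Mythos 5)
Your main step---transporting the Hodge-theoretic vanishing of Proposition \ref{lem:gr} to the \'etale side by writing $\lambda^{\circ}_{W_k}=\comp^{-1}\circ u\circ \tilde{\epsilon}_{W_k}\circ \AJ^{r+1}_{W_k}$, using Proposition \ref{prop:blochcomp} together with the compatibility of $\tilde{\epsilon}_{W_k}$ with the comparison isomorphism and with $u$---is exactly the paper's argument, and it correctly shows that $\lambda^{\circ}_{W_k}$ annihilates $\CH^{r+1}(W_{k,\bar \Q})_{\alg,\tors}$. (The aside on Hodge--Tate weights is unnecessary; the purity of Scholl's motive enters only through Proposition \ref{lem:gr}, as you yourself note.)

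There is, however, a gap in your final paragraph. Killing $\CH^{r+1}(W_{k,\bar \Q})_{\alg,\tors}$ only yields a map on the quotient $\CH^{r+1}(W_{k,\bar \Q})_{0,\tors}/\CH^{r+1}(W_{k,\bar \Q})_{\alg,\tors}$, which injects into $\Gr^{r+1}(W_{k,\bar \Q})_{\tors}$ but need not a priori be all of it: for $\lambda^{\circ}_{W_k}$ to be defined on every torsion class of the Griffiths group, you must know that every such class is represented by a \emph{torsion} null-homologous cycle. This is precisely what the exact sequence \eqref{seq:div} in the paper provides, and it rests on the divisibility of $\CH^{r+1}(W_{k,\bar \Q})_{\alg}$ (it is generated by images under correspondences of $\bar{\Q}$-points of Jacobians of curves, hence divisible; so if $nZ$ is algebraically trivial, choose algebraically trivial $Y$ with $nY=nZ$, and then $Z-Y$ is a torsion cycle in $\CH^{r+1}(W_{k,\bar\Q})_{0}$ mapping to the class of $Z$ in the Griffiths group). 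The point is not cosmetic: in the later applications (Corollary \ref{coro:finite} and the proof of Corollary \ref{cor:inford}) the map is evaluated on classes that are assumed torsion in $\Gr^{r+1}(W_{k,F_{pq}})$ but are represented by Heegner cycles that are not torsion in the Chow group, so without this surjectivity of $\CH^{r+1}(W_{k,\bar\Q})_{0,\tors}\lra \Gr^{r+1}(W_{k,\bar\Q})_{\tors}$ your factorisation would not apply to them. Adding the divisibility observation closes the gap and recovers the paper's proof.
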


\begin{proof}
The group $\CH^{r+1}(W_{k,\bar \Q})_{\alg}$ is divisible since (by definition of algebraic equivalence) it is generated by images under correspondences of $\bar{\Q}$-valued points on Jacobians of curves. It follows that there is an exact sequence of torsion subgroups
\begin{equation}\label{seq:div}
0\lra \CH^{r+1}(W_{k,\bar \Q})_{\alg, \tors}\lra \CH^{r+1}(W_{k,\bar \Q})_{0, \tors}\lra \Gr^{r+1}(W_{k,\bar{\Q}})_{\tors} \lra 0.
\end{equation}
In order to prove the result it thus suffices to show that the subgroup $\CH^{r+1}(W_{k,\bar \Q})_{\alg, \tors}$ lies in the kernel of \eqref{lambda0}.
By Proposition \ref{prop:blochcomp}, we have
\begin{equation}
\lambda_{W_k}^{\circ} = \tilde{\epsilon}_{W_k}\circ \comp^{-1} \circ u\circ \AJ_{W_k}^{r+1}.
\end{equation}
By compatibility of the comparison isomorphism (\ref{map:comp}) with correspondences (which follows from the compatibility of the cycle class maps with respect to the comparison isomorphism \cite[\S 5.3]{jannsen}), \begin{equation}
\lambda_{W_k}^{\circ} = \comp^{-1}\circ \tilde{\epsilon}_{W_k} \circ u\circ \AJ_{W_k}^{r+1}.
\end{equation}
From the natural compatibility of the map $u$ with correspondences, it follows that
\begin{equation}\label{eq:commu} 
\lambda_{W_k}^{\circ} = \comp^{-1}\circ u\circ \tilde{\epsilon}_{W_k} \circ \AJ_{W_k}^{r+1}=\comp^{-1}\circ u\circ \AJ_{W_k}.
\end{equation}
We have $\AJ_{W_k}(\CH^{r+1}(W_{k,\bar \Q})_{\alg, \tors})=0$ by Proposition \ref{lem:gr}.
\end{proof}

\section{A finiteness result for \'etale cohomology with torsion coefficients}\label{s:finiteness}

Let $k=2r\geq 2$ and $N\geq 5$ be integers. Let $W_k$ be the Kuga--Sato variety of level $\Gamma_1(N)$ over $\spec \Z[1/N]$ (constructed in \cite[Appendix]{bdp1}). Let $K$ be an imaginary quadratic field of discriminant $-d_K$ coprime to $N$ which satisfies the Heegner hypothesis with respect to $N$. Let $\cN$ be a choice of cyclic $N$-ideal of $K$. For each positive integer $n$, recall that $H_n$ denotes the ring class field of conductor $n$ over $K$, while $K_{\cN}$ is the ray class field of conductor $\cN$ over $K$. Let $H_\infty$ denote the compositum of the ring class fields $H_n$ for all square-free integers $n$ coprime to $N$. Let $F_n := K_{\cN}\cdot H_n$ and $F_\infty:=K_{\cN}\cdot H_\infty$. The goal of this section is to prove the following: 

\begin{proposition}\label{prop:finite}
With the above notations, the group $H^{k+1}_{\et}(W_{k,\bar \Q}, \Q/\Z(r+1))^{\Gal(\bar \Q/F_\infty)}$ is finite. 
\end{proposition}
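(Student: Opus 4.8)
The plan is to decompose $H^{k+1}_{\et}(W_{k,\bar\Q},\Q/\Z(r+1))=\bigoplus_\ell H^{k+1}_{\et}(W_{k,\bar\Q},\Q_\ell/\Z_\ell(r+1))$, reduce the finiteness of each $\ell$-primary part (and the vanishing of all but finitely many of them) to a statement with $\Q_\ell$-coefficients, and then settle the latter by a purity argument exploiting that $F_\infty$ is abelian over $K$ with very restricted ramification.

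First, for each $\ell$ the sequence $0\to\Z_\ell\to\Q_\ell\to\Q_\ell/\Z_\ell\to 0$ yields a short exact sequence $0\to V_\ell/T_\ell\to H^{k+1}_{\et}(W_{k,\bar\Q},\Q_\ell/\Z_\ell(r+1))\to H^{k+2}_{\et}(W_{k,\bar\Q},\Z_\ell(r+1))_{\tors}\to 0$, where $V_\ell:=H^{k+1}_{\et}(W_{k,\bar\Q},\Q_\ell(r+1))$ and $T_\ell\subset V_\ell$ is the image of the integral cohomology (a $\Gal(\bar\Q/\Q)$-stable lattice). The rightmost group is finite for every $\ell$ and vanishes for all but finitely many $\ell$, since $H^i_{\et}(W_{k,\bar\Q},\Z_\ell)$ is finitely generated over $\Z_\ell$ and torsion-free for almost all $\ell$. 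So I am reduced to showing: (a) $(V_\ell/T_\ell)^{\Gal(\bar\Q/F_\infty)}$ is finite for every $\ell$; and (b) it is zero for all but finitely many $\ell$. For (a) one invokes the elementary fact that $\operatorname{corank}_{\Z_\ell}(V_\ell/T_\ell)^{\Gal(\bar\Q/F_\infty)}=\dim_{\Q_\ell}V_\ell^{\Gal(\bar\Q/F_\infty)}$ (the Tate module of the maximal divisible subgroup of the invariants is a trivial $\Gal(\bar\Q/F_\infty)$-submodule of $V_\ell$, and conversely a trivial subspace of $V_\ell$ produces a divisible subgroup of the invariants of the same corank), so (a) amounts to $V_\ell^{\Gal(\bar\Q/F_\infty)}=0$.

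This is where the weights enter. Since $K_{\cN}$ and all the $H_n$ are abelian over $K$, so is $F_\infty$; hence $V_\ell^{\Gal(\bar\Q/F_\infty)}$ is $\Gal(\bar\Q/K)$-stable and $\Gal(\bar\Q/K)$ acts on it through the abelian group $\Gal(F_\infty/K)$, so over $\bar\Q_\ell$ it decomposes into characters $\psi$ of $\Gal(\bar\Q/K)$ factoring through $\Gal(F_\infty/K)$. Every such $\psi$ has finite order: the conductor of $F_\infty/K$ has exponent at most $1$ at each prime away from $N$ (only square-free conductors occur in $H_\infty$) and is bounded at $N$, so $F_\infty$ contains no $\Z_\ell$-extension of $K$; consequently $\Gal(F_\infty/K)$ has no $\Z_\ell$-quotient, and any continuous $\ell$-adic character of it has finite image. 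But $W_k$ is smooth and proper over $\Z[1/N]$, so $V_\ell$ is pure of weight $k+1-2(r+1)=-1$ by Deligne's theorem, whence for every prime $\mathfrak{q}\nmid N\ell$ the number $\psi(\Frob_{\mathfrak{q}})$ is a Weil number of weight $-1$, of archimedean absolute value $(\N\mathfrak{q})^{-1/2}\neq 1$; this contradicts finite order. Therefore $V_\ell^{\Gal(\bar\Q/F_\infty)}=0$ and (a) follows.

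For (b), restrict to $\ell$ large enough that $H^{k+1}_{\et}(W_{k,\bar\Q},\Z_\ell)$ and $H^{k+2}_{\et}(W_{k,\bar\Q},\Z_\ell)$ are torsion-free and $\ell>k+2$ (so Fontaine--Laffaille theory applies at $\ell$); it then suffices that $H^{k+1}_{\et}(W_{k,\bar\Q},\F_\ell(r+1))^{\Gal(\bar\Q/F_\infty)}=0$. Arguing mod $\ell$ as before, a nonzero invariant produces a character $\bar\psi$ of $\Gal(\bar\Q/K)$, factoring through $\Gal(F_\infty/K)$, occurring in $H^{k+1}_{\et}(W_{k,\bar\Q},\bar\F_\ell(r+1))$. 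Good reduction away from $N\ell$ makes $\bar\psi$ unramified outside $N\ell$, so it factors through the maximal subextension of $F_\infty/K$ unramified outside $N\ell$, namely the finite extension $K_{\cN}\cdot H_\ell$ of degree $O_{K,N}(\ell)$; since this extension is tamely ramified at $\ell$, the Fontaine--Laffaille bound on inertia weights lets one write $\bar\psi=\bar\chi^{w}\bar\eta$ with $\bar\chi$ the mod-$\ell$ cyclotomic character, $w\in\{-(r+1),\dots,r\}$, and $\bar\eta$ unramified outside $N$, hence of order $m$ bounded independently of $\ell$. Choosing a fixed auxiliary prime $q$ split in $K$ with $q\nmid N$ and $q\neq\ell$, the relation $\bar\psi^m(\Frob_q)=q^{wm}$, together with the fact that $\bar\psi(\Frob_q)$ equals the reduction of an eigenvalue $\alpha$ of $\Frob_q$ on $H^{k+1}_{\et}(W_{k,\bar\Q},\Q_\ell)$ (an algebraic integer of absolute value $q^{(k+1)/2}$, independent of $\ell$), forces $\ell$ to divide the norm of $\alpha^m-q^{(w+r+1)m}$; this is a nonzero integer because $|\alpha^m|=q^{(k+1)m/2}$ while $(k+1)/2\notin\Z$ as $k$ is even. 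Since $(w,m,\alpha)$ varies in a finite set, only finitely many $\ell$ can occur. The main difficulty is precisely this part (b): the vanishing of the mod-$\ell$ invariants is not formal from purity and requires coupling the square-free ramification of $F_\infty/K$ with $\ell$-adic Hodge theory at $\ell$ and a Weil-bound estimate at an auxiliary prime, and one must also handle separately the case of $\ell$ inert in $K$, replacing $\bar\chi$ by a level-two fundamental character after a quadratic base change.
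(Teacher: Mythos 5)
Your reduction to $\ell$-primary parts and your step (a) are correct, and they take a genuinely different route from the paper: the paper never exploits that $F_\infty/K$ is abelian, but instead uses Lemma \ref{lem:resdeg} (inert primes have bounded residue degree in $F_\infty$) together with smooth--proper base change at two auxiliary inert primes, so that the $\Gal(\bar\Q/F_\infty)$-invariants embed into Frobenius invariants over fixed finite fields; Deligne's purity then makes $\det\bigl(1-\Frob \mid H^{k+1}_{\et}(W_{k},\Q_\ell(r+1))\bigr)$ a nonzero rational number independent of $\ell$, which gives finiteness for every $\ell$ and vanishing for all but finitely many $\ell$ in one stroke. Your argument for (a) --- square-free conductors force $\Gal(F_\infty/K)$ to have no $\Z_\ell$-quotient, so only finite-order characters occur, which is incompatible with weight $-1$ --- is a valid substitute for the per-$\ell$ finiteness.

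The genuine gap is in (b), which is exactly the part needed to make the full direct sum over $\ell$ finite. The asserted decomposition $\bar\psi=\bar\chi^{w}\bar\eta$ with $\bar\eta$ unramified outside $N$ and of bounded order is in general unavailable for the characters you must treat. A character of $\Gal(F_\infty/K)$ that is ramified at $\ell$ factors through $\Gal(K_{\cN}H_\ell/K)$, and the ring class condition (triviality on the diagonally embedded $\Z_\ell^\times$) forces its local components at the two primes $\lambda,\bar\lambda$ of $K$ above a split $\ell$ to be inverse to one another: the tame inertia exponents are $w$ at $\lambda$ and $-w$ at $\bar\lambda$. Since $\bar\chi$ has the \emph{same} exponent at $\lambda$ and $\bar\lambda$, no twist by a power of $\bar\chi$ can remove the ramification at both places (Fontaine--Laffaille only constrains each exponent to a bounded range, forcing them to be negatives of each other, not equal); in the inert case the tame exponent is a multiple of $\ell-1$ in $\Z/(\ell^2-1)\Z$, again not of the claimed shape. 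Consequently the divisibility $\ell\mid\mathrm{Nm}\bigl(\alpha^{m}-q^{(w+r+1)m}\bigr)$ is not established and (b) is unproven as written. (It can be repaired using the anticyclotomic structure itself: evaluate $\bar\psi$ at both primes $\mathfrak{q},\bar{\mathfrak{q}}$ of $K$ above an auxiliary split rational prime $q$; the product of the two values has bounded order because all ramification above $\ell$ cancels, yielding $\ell\mid\mathrm{Nm}\bigl((\alpha\alpha')^{M}-q^{(k+2)M}\bigr)\neq 0$ by the weight $k+1$ bound --- or one can simply follow the paper's argument.) Note also that your identification of the maximal subextension of $F_\infty/K$ unramified outside $N\ell$ with $K_{\cN}H_\ell$ is asserted rather than proved and requires a class field theory computation of the same flavour as Lemma \ref{lem:resdeg}.
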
 

Before proving Proposition \ref{prop:finite}, we collect a preliminary result concerning the splitting behaviour of primes in the extension $F_\infty$ of $K$:

\begin{lemma}\label{lem:resdeg}
With the above notations, let $q$ be a prime which is coprime to $2N$ and inert in $K$. Let $\mathfrak{q}$ denote a prime of $H$ above $q$ and denote by $s$ its residual degree in the extension $K_{\cN}/H$. Then, for any square-free positive integer $n$ coprime to $N$, the residual degree of $\mathfrak{q}$ in the extension $F_n/H$ is equal to $s$.
\end{lemma}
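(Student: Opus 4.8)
The statement is a purely algebraic number theory fact about splitting of primes in the tower $F_n/H$, so the strategy is to decompose the extension $F_n/H$ into the two pieces $K_{\cN}/H$ and $H_n/H$ and track the residual degree of $\mathfrak{q}$ through each. First I would observe that $H_n/K$ is the ring class field of conductor $n$, so $H_n/H$ is abelian and unramified outside $n$ (here $n$ is coprime to $N$ and, by the hypothesis, coprime to $q$ and to $2$), while $K_{\cN}/H$ is abelian and unramified outside $N$. Since $q$ is coprime to $2Nn$, the prime $\mathfrak{q}$ of $H$ above $q$ is unramified in the compositum $F_n = K_{\cN}\cdot H_n$, and its decomposition group in $\Gal(F_n/H)$ is cyclic, generated by a Frobenius element $\mathrm{Frob}_{\mathfrak{q}}$. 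The residual degree of $\mathfrak{q}$ in any subextension is the order of the image of $\mathrm{Frob}_{\mathfrak{q}}$ in the corresponding quotient of $\Gal(F_n/H)$.

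The key point will be to show that $\mathrm{Frob}_{\mathfrak{q}}$ is \emph{trivial} in $\Gal(H_n/H)$, so that its order in $\Gal(F_n/H)$ equals its order in $\Gal(K_{\cN}/H)$, which is $s$ by hypothesis. To see this, I would use class field theory for the ring class field: $\Gal(H_n/K)$ is identified with the ring class group $\Pic(\oh_n)$, and the Frobenius at a prime $\mathfrak{Q}$ of $K$ above a rational prime corresponds to the class of $\mathfrak{Q}$ (intersected with $\oh_n$) in $\Pic(\oh_n)$. Because $q$ is \emph{inert} in $K$, there is a unique prime $\mathfrak{Q} = (q)$ of $K$ above $q$, and it is principal — generated by the rational integer $q$, which lies in every order $\oh_n$. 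Hence the corresponding ideal class in $\Pic(\oh_n)$ is trivial, so $q$ (equivalently $\mathfrak{Q}$) splits completely in $H_n/K$. In particular, passing to the prime $\mathfrak{q}$ of $H$ above $q$ (noting $H \subseteq H_n$), the Frobenius of $\mathfrak{q}$ in $\Gal(H_n/H)$ is trivial, i.e. $\mathfrak{q}$ splits completely in $H_n/H$.

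Once that is established, the conclusion follows formally: in $\Gal(F_n/H) \hookrightarrow \Gal(K_{\cN}/H)\times \Gal(H_n/H)$ the Frobenius $\mathrm{Frob}_{\mathfrak{q}}$ maps to $(\mathrm{Frob}_{\mathfrak{q}}|_{K_{\cN}},\, \mathrm{Frob}_{\mathfrak{q}}|_{H_n}) = (\mathrm{Frob}_{\mathfrak{q}}|_{K_{\cN}},\, 1)$, so its order is the order of $\mathrm{Frob}_{\mathfrak{q}}|_{K_{\cN}}$, which is the residual degree of $\mathfrak{q}$ in $K_{\cN}/H$, namely $s$. I would also remark that the same argument handles $F_\infty = K_{\cN}\cdot H_\infty$ in the limit, which is presumably why the lemma is stated for all square-free $n$ simultaneously.

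\textbf{Main obstacle.} There is no deep obstacle; the only thing requiring a little care is the class field theory bookkeeping for non-maximal orders — specifically, justifying that the decomposition behaviour of $q$ in $H_n/K$ is governed by the class of $(q)$ in the \emph{ring class group} $\Pic(\oh_n)$ rather than the full ray class group, and checking that $q$ being inert (so $(q)$ prime and principal in $\oh_K$, hence principal in $\oh_n$ since $q \in \oh_n$) indeed forces complete splitting. This is standard (cf.\ the treatment of ring class fields in \cite[Chapter 9]{cox}), but the coprimality hypotheses on $q$ (to $2N$, hence to the conductor $n$ and to $\cN$) must be invoked to ensure no ramification and to place $q$ in the relevant idele/ideal groups cleanly.
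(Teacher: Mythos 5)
Your core idea — reduce to showing that the ring-class part contributes nothing to the residue degree, using that for $q$ inert the unique prime $(q)$ of $K$ is principal and prime to the conductor, hence trivial in $\Pic(\oh_n)$ — is exactly the input the paper invokes (it cites the fact that the residual degree of $q\oh_K$ in $H_n/K$ is $1$ for square-free $n$), and your bookkeeping via $\Gal(F_n/H)\hookrightarrow \Gal(K_{\cN}/H)\times\Gal(H_n/H)$ is fine whenever $\mathfrak{q}$ is unramified in $F_n$. However, there is a genuine gap: you have silently added the hypothesis that $n$ is coprime to $q$ (``by the hypothesis, coprime to $q$''), which is \emph{not} part of the statement — the lemma only assumes $n$ square-free and coprime to $N$. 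The case $q\mid n$ cannot be discarded: it is precisely what the application in Proposition \ref{prop:finite} needs, since $F_\infty$ is the compositum over \emph{all} square-free $n$ coprime to $N$, including $n=q$ itself. In that case $\mathfrak{q}$ ramifies in $H_n/H$ (for $q$ inert and $n=qm$ with $q\nmid m$, the primes above $q$ are totally ramified in $H_n/H_m$), so there is no Frobenius in $\Gal(H_n/H)$ and your argument as written does not apply; moreover one cannot in general read off the residue degree of a compositum from the residue degrees of the factors once ramification is present (two totally ramified quadratic extensions of $\Q_p$ can compose to a field containing the unramified quadratic extension), so the step ``order of $\mathrm{Frob}_{\mathfrak{q}}$ in $\Gal(F_n/H)$ equals its order in the $K_{\cN}$-factor'' needs justification beyond the unramified product decomposition.

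The missing case can be repaired, and this is essentially what the cited thesis results do: write $n=qm$ with $q\nmid m$; your principal-ideal argument shows $\mathfrak{q}$ splits completely in $H_m/H$, and $\mathfrak{q}$ is totally ramified in $H_n/H_m$, so the residual degree of $\mathfrak{q}$ in $H_n/H$ is still $1$, i.e.\ the local extension $H_{n,\mathfrak{q}'}/H_{\mathfrak{q}}$ is totally ramified. Since $q\nmid N$, the local extension coming from $K_{\cN}$ is unramified of degree $s$, and a totally ramified extension remains totally ramified after base change along an unramified extension (compare ramification indices and degrees). Hence the completion of $F_n$ at a prime above $\mathfrak{q}$ has residue degree exactly $s$ over $H_{\mathfrak{q}}$, which is the asserted conclusion in the ramified case as well. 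Without this supplement your proof establishes the lemma only for $n$ prime to $q$, which is strictly weaker than what the paper states and uses.
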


\begin{proof}
This is \cite[Corollary 1.2]{mythesis}. The proof uses the fact that if $n$ is a square-free positive integer and $q$ is a rational prime which is inert in $K$, then the residual degree of $q\oh_K$ in the extension $H_n/K$ is equal to $1$ (see for instance \cite[Proposition 1.8]{mythesis}).
\end{proof}

\begin{proof}[Proof of Proposition \ref{prop:finite}]
Fix $q_1$ and $q_2$ two distinct primes which are coprime to $2N$ and inert in $K$. Let $i\in \{ 1,2\}$. The variety $W_k$ has good reduction at $q_i$, and we may consider the reduction $W_{k,\F_{q_i}}$ over $\F_{q_i}$. The embeddings fixed in Section \ref{s:convention} determine primes $\mathfrak{q}_i$ in $H$ and $\mathfrak{q}_i^\infty$ in $F_\infty$ above $q_i\oh_K$. If $s_i$ denotes the residual degree of $\mathfrak{q}_i$ in $K_{\cN}/H$, then the residual degree of $\mathfrak{q}_i$ in $F_\infty/H$ is $s_i$ by Lemma \ref{lem:resdeg}. Since $q$ is inert in $K$, $q\oh_K$ splits completely in $H$. It follows that the residual degree of $q\oh_K$ in $H/K$ is equal to $1$. By multiplicativity of residual degrees in extension towers, we conclude that $q_i$ has residual degree $r_i:=2s_i$ in the extension $F_\infty/\Q$.
Let $D_i$ denote the decomposition group of $\Gal(\bar \Q/F_\infty)$ of a prime above $\mathfrak{q}_i^\infty$. 

Let $\ell$ be a prime and choose $i\in\{ 1,2\}$ such that $\ell\neq q_i$. Note that we may choose $i=1$ except when $\ell=q_1$ in which case we must choose $i=2$.
Using \cite[VI Corollary 4.2]{milne} and taking direct limits, we obtain an isomorphism
\[
H^{k+1}_{\et}(W_{k,\bar \Q}, \Q_\ell/\Z_\ell(r+1))^{D_i} \simeq H^{k+1}_{\et}(W_{k,\bar{\F}_{q_i}}, \Q_\ell/\Z_\ell(r+1))^{\Gal(\bar{\F}_{q_i}/\F_{q_i^{r_i}})}.
\]
In particular, $H^{k+1}_{\et}(W_{k,\bar \Q}, \Q/\Z(r+1))^{\Gal(\bar \Q/F_\infty)}$ injects into
\begin{equation}\label{directsum}
H^{k+1}_{\et}(W_{k,\bar{\F}_{q_2}}, \Q_{q_1}/\Z_{q_1}(r+1))^{\Gal(\bar{\F}_{q_2}/\F_{q_2^{r_2}})} \oplus \bigoplus_{\ell\neq q_1} H^{k+1}_{\et}(W_{k,\bar{\F}_{q_1}}, \Q_\ell/\Z_\ell(r+1))^{\Gal(\bar{\F}_{q_1}/\F_{q_1^{r_1}})}.
\end{equation}

We have reduced the proof to showing that the group \eqref{directsum} is finite. Let $\ell$ be a prime and choose $i\in \{ 1,2 \}$ such that $\ell\neq q_i$. From the short exact sequence $0\lra \Z_\ell\lra \Q_\ell\lra \Q_\ell/\Z_\ell\lra 0$, we deduce a short exact sequence 
\[
0\lra \frac{H^{k+1}_{\et}(W_{k,\bar{\F}_{q_i}}, \Q_\ell(r+1))}{H^{k+1}_{\et}(W_{k,\bar{\F}_{q_i}}, \Z_\ell(r+1))} \lra H^{k+1}_{\et}(W_{k,\bar{\F}_{q_i}}, \Q_\ell/\Z_\ell(r+1)) \lra H^{k+2}_{\et}(W_{k,\bar{\F}_{q_i}}, \Z_\ell(r+1))_{\tors}
\lra 0.
\]

The group on the right hand side is finite and trivial for all but finitely many $\ell$. Indeed, using \cite[VI Corollary 4.2 \& 4.3]{milne} and taking inverse limits gives an isomorphism 
\[
H^{k+2}_{\et}(W_{k,\bar{\F}_{q_i}}, \Z_\ell(r+1))\simeq H^{k+2}_{\et}(W_{k,\bar{\Q}}, \Z_\ell(r+1)),
\]
which in turn is isomorphic to $H^{k+2}_{\et}(W_{k,\C}(\C), \Z)(r+1)\otimes \Z_\ell$ by the comparison isomorphism \cite[III Theorem 3.12]{milne}. The claim follows since $H^{k+2}_{\et}(W_{k,\C}(\C), \Z)_{\tors}$ is finite. 

The number of fixed points under the action of $\Gal(\bar{F}_{q_i}/\F_{q_i^{r_i}})$ of the left hand side of the above short exact sequence is equal to 
$
\vert \det(1-\Frob_{q_i^{r_i}} \vert H^{k+1}_{\et}(W_{k,\bar{\F}_{q_i}}, \Q_\ell(r+1))) \vert,
$
which is finite and independent of the prime $\ell$ by Deligne's theorem (the Weil conjecture) \cite{deligneW}.

In conclusion, each term in the infinite direct sum \eqref{directsum} is finite, and trivial for all but finitely many terms.
\end{proof}

\begin{definition}\label{def:Mr}
Let $M_r:=\vert H^{k+1}_{\et}(W_{k,\bar \Q}, \Q/\Z(r+1))^{\Gal(\bar \Q/F_\infty)} \vert$, which is finite by Proposition \ref{prop:finite}.
\end{definition}

\begin{corollary}\label{coro:finite}
With the above notations, if $n$ is a square-free integer coprime to $N$, then the image of $\Gr^{r+1}(W_{k,F_n})_{\tors}$ under the map $\lambda_{W_k}^\circ$ of Proposition \ref{lem:grbloch} is annihilated by $M_r$.
\end{corollary}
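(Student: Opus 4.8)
The plan is to deduce this formally from Proposition \ref{lem:grbloch}, the $\Gal(\bar\Q/\Q)$-equivariance of the Bloch map, and the finiteness statement of Proposition \ref{prop:finite}; no new geometric input is needed. Throughout, by ``the image of $\Gr^{r+1}(W_{k,F_n})_{\tors}$ under $\lambda_{W_k}^\circ$'' I mean the composite of the base-change map $\Gr^{r+1}(W_{k,F_n})_{\tors}\to \Gr^{r+1}(W_{k,\bar\Q})_{\tors}$ with the map $\lambda_{W_k}^\circ$ of Proposition \ref{lem:grbloch}.

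First I would record the field-theoretic input. Since $n$ is square-free and coprime to $N$, the ring class field $H_n$ lies in $H_\infty$ by construction, so $F_n=K_\cN\cdot H_n\subseteq K_\cN\cdot H_\infty=F_\infty$, hence $\Gal(\bar\Q/F_\infty)\subseteq\Gal(\bar\Q/F_n)$. Taking invariants reverses this inclusion, so
\[
H^{k+1}_{\et}(W_{k,\bar\Q},\Q/\Z(r+1))^{\Gal(\bar\Q/F_n)}\ \subseteq\ H^{k+1}_{\et}(W_{k,\bar\Q},\Q/\Z(r+1))^{\Gal(\bar\Q/F_\infty)},
\]
and the group on the right has order $M_r$ by Definition \ref{def:Mr}; therefore every subgroup of it is annihilated by $M_r$.

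Next I would check that the image in question lands inside the group on the left. On the one hand, base change along the fixed embedding $F_n\hookrightarrow\bar\Q$ is compatible with homological and algebraic equivalence and with the Galois action, so it carries $\Gr^{r+1}(W_{k,F_n})_{\tors}$ into the $\Gal(\bar\Q/F_n)$-fixed part of $\Gr^{r+1}(W_{k,\bar\Q})_{\tors}$. On the other hand, $\lambda_{W_k}^\circ$ is $\Gal(\bar\Q/\Q)$-equivariant: by \eqref{lambda0} it is $\tilde\epsilon_{W_k}$ composed, on null-homologous torsion cycles, with the Bloch map \eqref{map:bloch}, which is Galois-equivariant by Proposition \ref{prop:bloch}, while $\tilde\epsilon_{W_k}$ is a correspondence defined over $\Q$; and since the quotient $\CH^{r+1}(W_{k,\bar\Q})_{0,\tors}\twoheadrightarrow\Gr^{r+1}(W_{k,\bar\Q})_{\tors}$ appearing in \eqref{seq:div} is a Galois-equivariant surjection, the induced map $\lambda_{W_k}^\circ$ on $\Gr^{r+1}(W_{k,\bar\Q})_{\tors}$ of Proposition \ref{lem:grbloch} is Galois-equivariant as well. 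Hence a $\Gal(\bar\Q/F_n)$-fixed class in $\Gr^{r+1}(W_{k,\bar\Q})_{\tors}$ maps under $\lambda_{W_k}^\circ$ to a $\Gal(\bar\Q/F_n)$-fixed cohomology class. Combining this with the previous display finishes the proof.

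This is essentially bookkeeping, and I do not anticipate a genuine obstacle. The only two points that merit care are (i) that $\lambda_{W_k}^\circ$, originally constructed only from torsion cycle classes, truly descends to a \emph{Galois-equivariant} map on $\Gr^{r+1}(W_{k,\bar\Q})_{\tors}$ — which is exactly what the Galois-equivariance of the surjection in \eqref{seq:div} guarantees — and (ii) the compatibility of base change with algebraic equivalence, so that a torsion class rational over $F_n$ really does produce a $\Gal(\bar\Q/F_n)$-invariant class over $\bar\Q$.
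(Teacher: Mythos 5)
Your proof is correct and follows essentially the same route as the paper: Galois equivariance of the Bloch map (Proposition \ref{prop:bloch}) places the image inside $H^{k+1}_{\et}(W_{k,\bar \Q}, \Q/\Z(r+1))^{\Gal(\bar \Q/F_n)}$, which sits inside the $\Gal(\bar\Q/F_\infty)$-invariants since $F_n\subseteq F_\infty$, and that group has order $M_r$. The extra bookkeeping you spell out (base change into the Galois-fixed part of $\Gr^{r+1}(W_{k,\bar\Q})_{\tors}$ and the equivariance of the quotient in \eqref{seq:div}) is exactly what the paper leaves implicit.
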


\begin{proof}
By the Galois equivariance of the Bloch map (Proposition \ref{prop:bloch}), the image of $\Gr^{r+1}(W_{k,F_n})_{\tors}$ under $\lambda_{W_k}^\circ$ lies in $H^{k+1}_{\et}(W_{k,\bar \Q}, \Q/\Z(r+1))^{\Gal(\bar \Q/F_n)}$, a subgroup of $H^{k+1}_{\et}(W_{k,\bar \Q}, \Q/\Z(r+1))^{\Gal(\bar \Q/F_\infty)}$ by definition of $F_\infty$. The result follows by definition of $M_r$ (Definition \ref{def:Mr}).
\end{proof}

 \section{Explicit isogenies}\label{s:explicit}

In the rest of this paper, we will focus on a particular subcollection of Heegner cycles and their properties. These are indexed by certain explicit (isomorphism classes of) isogenies. In Section \ref{s:gr}, will prove that this subcollection generates a subgroup of infinite rank modulo algebraic equivalence. 

Fix an imaginary quadratic field $K$ with ring of integers $\cO_K$ and discriminant $-d_K$ coprime to $N$. Assume that $K$ satisfies the Heegner hypothesis with respect to $N$, and let $\cN$ denote a choice of cyclic $N$-ideal of $\oh_K$.
Let $A$ be an elliptic curve with CM by $\oh_K$ over the Hilbert class field $H$ of $K$. Choose the complex embedding $H\hookrightarrow \C$ of Section \ref{s:convention} such that $A_\C=\C/\oh_K$.
Let $\tau:=(-d_K+\sqrt{-d_K})/2\in \cH$ denote the standard generator of $\oh_K$, so that $\oh_K=\langle 1, \tau \rangle:=\Z\oplus \Z\tau$. It satisfies the quadratic equation $\tau^2+d_K\tau+d_K(d_K+1)/4=0$. Note that the coefficient $d_K(d_K+1)/4$ is integral since $-d_K\equiv 0,1 \pmod 4$.

\subsection{Explicit $q$-isogenies}\label{s:qisog}

Let $q$ be an odd prime which is coprime to $d_K$. Consider the $q+1$ lattices $\Lambda_{q,\beta}:=\langle 1,\tau_{q,\beta} \rangle=\Z\oplus \Z\tau_{q,\beta}$ in $\C$ indexed by $\beta\in \PP^1(\F_q)$, where 
\[
\tau_{q,\beta}:=
\begin{cases}
q\tau & \text{ if } \beta=\infty \\
\frac{\tau+\beta}{q} &  \text{ if } \beta\neq \infty.
\end{cases}
\]
Observe that $\Lambda_{q,\infty}$ is the order $\oh_{q}$ of $K$ of conductor $q$. 

There are natural isogenies $\varphi_{q,\beta} : \C/\oh_K \lra \C/\Lambda_{q,\beta}$ of complex tori defined as follows:
\begin{itemize}
 \item If $\beta=\infty$, then $\varphi_{q,\infty}$ is the natural quotient map induced by the inclusion of lattices $\langle 1, \tau \rangle\subset \langle 1/q, \tau \rangle$ composed with multiplication by $q$: 
\[
\varphi_{q,\infty} : \C/\langle 1,\tau\rangle \overset{\quot}{\lra} \C/\langle 1/q, \tau \rangle \overset{[q]}{\lra} \C/\langle 1, \tau_{q,\infty} \rangle.
\] 
\item If $\beta \neq \infty$, then $\varphi_{q,\beta}$ is given by the quotient map
\[
\varphi_{q,\beta} : \C/\langle 1, \tau\rangle \overset{=}{\lra}  \C/\langle 1, \tau+\beta \rangle \overset{\quot}{\lra}  \C/\langle 1, (\tau+\beta)/q \rangle,
\]
induced by the inclusion of lattices $\langle 1, \tau+\beta \rangle\subset \langle 1, (\tau+\beta)/q \rangle$. 
\end{itemize}
Observe that the degree $d_{\varphi_{q,\beta}}$ of $\varphi_{q,\beta}$ is equal to $q$ for all $\beta\in \PP^1(\F_q)$. 

\begin{proposition}\label{prop:inert}
If $q$ is an odd prime which is inert in $K$, then $\Lambda_{q,\beta}$ is a proper fractional $\oh_{q}$-ideal for all $\beta\in \PP^1(\F_q)$.
\end{proposition}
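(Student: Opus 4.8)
The strategy is to compute, for each $\beta\in\PP^1(\F_q)$, the \emph{multiplier ring} $\mathcal{O}_\beta:=\{x\in K: x\Lambda_{q,\beta}\subseteq\Lambda_{q,\beta}\}$ of the lattice $\Lambda_{q,\beta}$ and to show it equals $\oh_q$; recall that every lattice in $K$ is automatically a proper fractional ideal of its own multiplier ring, so that $\Lambda_{q,\beta}$ is a proper fractional $\oh_q$-ideal if and only if $\mathcal{O}_\beta=\oh_q$ (and not the strictly larger order $\oh_K$). The case $\beta=\infty$ is immediate and uses nothing: $\Lambda_{q,\infty}=\oh_q$ by construction, and the multiplier ring of any order is the order itself. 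So from now on fix $\beta\neq\infty$, together with an integer lift of $\beta$ (the lattice $\Lambda_{q,\beta}$ is independent of this choice).

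The next step is to record the elementary containments
\[
\oh_K\subseteq\Lambda_{q,\beta}\subseteq\tfrac1q\,\oh_K,\qquad q\Lambda_{q,\beta}=\langle q,\tau+\beta\rangle\subseteq\oh_K,
\]
where $\tau\in\oh_K$ since $\tau=-\beta+q\cdot\frac{\tau+\beta}{q}$, and where the last lattice is an integral $\oh_K$-ideal of index $q$ in $\oh_K$. Writing a general element of $\oh_q=\Z+q\oh_K$ as $x=m+q\alpha$ with $m\in\Z$ and $\alpha\in\oh_K$, one checks directly from these two facts that $x\Lambda_{q,\beta}=m\Lambda_{q,\beta}+\alpha(q\Lambda_{q,\beta})\subseteq\Lambda_{q,\beta}+\oh_K=\Lambda_{q,\beta}$, so that $\oh_q\subseteq\mathcal{O}_\beta$.

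Since $q$ is prime, the orders of $K$ containing $\oh_q$ are precisely $\oh_q$ and $\oh_K$, so it remains only to exclude the possibility $\mathcal{O}_\beta=\oh_K$. I expect this to be the one place where the hypothesis genuinely enters, and the step to be careful about. If $\mathcal{O}_\beta=\oh_K$, then $\Lambda_{q,\beta}$ would be a fractional $\oh_K$-ideal, and hence $q\Lambda_{q,\beta}$ an integral ideal of $\oh_K$ of norm $q$; but $q$ is inert in $K$ (which also forces $q\nmid d_K$), so $\oh_K$ has no prime of residue degree one above $q$ and thus no ideal of norm $q$ whatsoever — a contradiction. Therefore $\mathcal{O}_\beta=\oh_q$ and $\Lambda_{q,\beta}$ is a proper fractional $\oh_q$-ideal. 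As an aside, one can instead argue by an explicit computation: $\tau_{q,\beta}=\frac{\tau+\beta}{q}$ satisfies the primitive quadratic equation $q^2X^2-q(2\beta-d_K)X+\frac{(2\beta-d_K)^2+d_K}{4}=0$, whose constant term is an integer coprime to $q$ precisely because $-d_K$ is a non-square modulo $q$; the standard dictionary between binary quadratic forms and lattices then identifies the multiplier ring of $\langle1,\tau_{q,\beta}\rangle$ with the order of discriminant $-q^2d_K$, namely $\oh_q$.
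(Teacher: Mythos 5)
Your main argument is correct and takes a genuinely different route from the paper's. The paper works with the quadratic equation satisfied by $\tau_{q,\beta}$: it observes that the constant term $\beta^2-d_K\beta+d_K(d_K+1)/4$ is prime to $q$ precisely because $q$ is inert (otherwise $-d_K$ would be a square modulo $q$), so the equation $q^2X^2+q(d_K-2\beta)X+(\beta^2-d_K\beta+d_K(d_K+1)/4)=0$ is primitive, and then invokes the form--lattice dictionary \cite[Lemma 7.5]{cox} to identify the relevant order as $\langle 1,q^2\tau_{q,\beta}\rangle=\oh_q$ --- this is exactly the computation you relegate to your closing aside. Your main proof instead pins down the multiplier ring $\mathcal{O}_\beta$ directly: the containments $\oh_K\subseteq\Lambda_{q,\beta}$ and $q\Lambda_{q,\beta}\subseteq\oh_K$ give $\oh_q\subseteq\mathcal{O}_\beta$; since $q$ is prime, the only alternative to $\mathcal{O}_\beta=\oh_q$ is $\oh_K$, which you exclude because an $\oh_K$-module structure on $\Lambda_{q,\beta}$ would make $q\Lambda_{q,\beta}$ an integral $\oh_K$-ideal of norm $[\oh_K:q\Lambda_{q,\beta}]=q$, impossible when $q$ is inert. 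This is a clean conceptual alternative: the inertness hypothesis enters through the ideal-norm structure of $\oh_K$ rather than through a congruence on the constant coefficient, and Cox's lemma is replaced by the standard facts that a lattice is proper over its multiplier ring and that the only orders containing $\oh_q$ are $\oh_q$ and $\oh_K$.

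One incidental assertion must be fixed: after your display you claim that $q\Lambda_{q,\beta}=\langle q,\tau+\beta\rangle$ is an integral $\oh_K$-ideal of index $q$ in $\oh_K$. It is an index-$q$ sublattice, but it is not an $\oh_K$-ideal: it is stable under $\oh_K$ if and only if $q$ divides $N(\tau+\beta)=\beta^2-d_K\beta+d_K(d_K+1)/4$, which fails for inert $q$ --- indeed your own concluding contradiction shows that $\oh_K$ has no ideal of norm $q$, so the claim is inconsistent with the rest of your argument. Fortunately the only property used in the verification $\oh_q\subseteq\mathcal{O}_\beta$ is the containment $q\Lambda_{q,\beta}\subseteq\oh_K$ (together with $\alpha\oh_K\subseteq\oh_K$ for $\alpha\in\oh_K$), so the error is not load-bearing; simply call it an index-$q$ sublattice of $\oh_K$ rather than an ideal.
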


\begin{proof}
The statement is clear for $\beta=\infty$, hence we assume $\beta\in \F_q$. The element $\tau_{q,\beta}\in \cH$ satisfies the quadratic equation $q^2\tau_{q,\beta}^2+q(d_K-2\beta)\tau_{q,\beta}+(\beta^2-d_K\beta+d_K(d_K+1)/4)=0$. If $\beta^2-d_K\beta+d_K(d_K+1)/4\equiv 0 \pmod q$, then $-d_K\equiv (2\beta-d_K)^2 \pmod q$, hence $\left( \frac{-d_K}{q} \right)=1$ and $q$ splits in $K$. Since $q$ is assumed to be inert, we conclude that $q$ does not divide the constant coefficient. Thus, the coefficients of the above quadratic equation satisfied by $\tau_{q,\beta}$ have no common factors. By \cite[Lemma 7.5]{cox}, $\Lambda_{q,\beta}$ is a proper fractional $\langle 1, q^2\tau_{q,\beta}\rangle$-ideal. The result follows by observing that $\langle 1, q^2\tau_{q,\beta}\rangle=\langle 1, q(\tau+\beta)\rangle=\oh_q$.
\end{proof}

\begin{proposition}\label{prop:descend}
Let $q$ be an odd prime which is inert in $K$ and coprime to $N$. For all $\beta\in \PP^1(\F_q)$, there exists an elliptic curve $A_{q,\beta}$ with CM by $\oh_{q}$ defined, along with its complex multiplication, over the ring class field $H_{q}$ such that $A_{q,\beta,\C}=\C/\Lambda_{q,\beta}$ and the isogeny of complex tori $\varphi_{q,\beta}$ descends to an isogeny $\varphi_{q,\beta} : A\lra A_{q,\beta}$ giving rise to an isomorphism class $(\varphi_{q,\beta}, A_{q,\beta})\in \Isog^{\cN}(A)$ with field of definition $H_{q}$.
\end{proposition}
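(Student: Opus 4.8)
The statement has several intertwined assertions: the existence of an elliptic curve $A_{q,\beta}$ with CM by $\oh_q$ with the prescribed complex uniformisation, that it can be defined together with its CM over $H_q$, that the analytic isogeny $\varphi_{q,\beta}$ descends, and that the resulting isomorphism class lies in $\Isog^{\cN}(A)$ with field of definition exactly $H_q$. The plan is to treat these in order, exploiting the general theory of complex multiplication as packaged in \cite[Theorem 11.1]{cox} together with the analytic setup already in place.

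First I would invoke Proposition \ref{prop:inert}: since $q$ is inert in $K$, each $\Lambda_{q,\beta}$ is a proper fractional $\oh_q$-ideal, so $\C/\Lambda_{q,\beta}$ is a complex torus with CM by $\oh_q$. By \cite[Theorem 11.1]{cox} (the main theorem of CM for non-maximal orders), the $j$-invariant $j(\Lambda_{q,\beta})$ is an algebraic number generating the ring class field $H_q$ over $K$, and there is an elliptic curve $A_{q,\beta}$ defined over $H_q$, with $\End_{H_q}(A_{q,\beta})\simeq \oh_q$, such that $A_{q,\beta,\C}=\C/\Lambda_{q,\beta}$. Following the convention fixed in Section \ref{s:isog}, I normalise the isomorphism $\End_{H_q}(A_{q,\beta})\simeq \oh_q$ by $[\alpha]^*\omega' = \alpha\omega'$ on regular differentials. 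This already yields the first two claims. For the descent of $\varphi_{q,\beta}$: the analytic map $\C/\oh_K\to\C/\Lambda_{q,\beta}$ is multiplication by a scalar on the universal covers, hence is a homomorphism of complex tori coming from an inclusion (up to scaling) of lattices; since both source and target are now algebraic and defined over $H_q$ (the source $A$ over $H\subset H_q$, the target $A_{q,\beta}$ over $H_q$), the analytic homomorphism is automatically algebraic and defined over $H_q$ — this is the standard fact that an analytic homomorphism between abelian varieties defined over a field $F\subset\C$ is defined over $\bar F$, and here one checks it descends to $H_q$ because the CM action is defined over $H_q$ and the isogeny is $\oh_K$-linear up to the scalar, or alternatively because $\Gal(\bar\Q/H_q)$ fixes $A$, $A_{q,\beta}$ and permutes $\Hom(A,A_{q,\beta})$ compatibly with the CM structures, forcing it to fix $\varphi_{q,\beta}$.

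Next I would verify that $(\varphi_{q,\beta},A_{q,\beta})\in\Isog^{\cN}(A)$, i.e. that $\ker\varphi_{q,\beta}$ intersects $A[\cN]$ trivially. Since $q$ is coprime to $N$ and $\cN$ is a cyclic $N$-ideal, $A[\cN]$ has order $N$ coprime to $q=\deg\varphi_{q,\beta}$; the kernel of $\varphi_{q,\beta}$ has order $q$, so the intersection is trivial for order reasons. Finally, for the field of definition: I would argue $H_q$ is the minimal field over which the class $(\varphi_{q,\beta},A_{q,\beta})$ is defined. One inclusion is what we just showed. For minimality, observe that if the isomorphism class were defined over a proper subfield $H\subseteq F\subsetneq H_q$, then $A_{q,\beta}$ — which determines $j(\Lambda_{q,\beta})$, a generator of $H_q/K$ by \cite[Theorem 11.1]{cox} — would have a model over $F$, contradicting that $H_q=K(j(\Lambda_{q,\beta}))$; more precisely the $\Gal(\bar\Q/F)$-orbit of the isomorphism class would be a singleton, but the CM theory shows $\Gal(H_q/K)$ acts on the set of proper $\oh_q$-ideal classes (equivalently on the $j$-invariants of CM tori with CM by $\oh_q$) without fixing $j(\Lambda_{q,\beta})$ unless the corresponding subextension is all of $H_q$. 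One must also confirm that adjoining the level structure $\varphi_{q,\beta}(t)$ does not enlarge the field: here $t\in A[\cN]$ generates a group of order $N$ prime to $q$, and $\varphi_{q,\beta}$ being an isomorphism on $\cN$-torsion, $\varphi_{q,\beta}(t)$ is defined over the same field as $(A_{q,\beta},\text{its }\oh_q\text{-structure})$ together with the image of $t$ — and since $t$ and the isogeny $\varphi_{q,\beta}$ are already rational over $H_q$ (the isogeny by the above, and $t$ over $K_{\cN}\supseteq?$) one should be slightly careful. I expect the main obstacle to be precisely this last bookkeeping: making sure that the field of definition is $H_q$ \emph{exactly}, neither larger (because of the level structure $t$, whose natural field is $K_{\cN}$) nor smaller (minimality via CM theory). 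The resolution should be that in the definition of $\Isog^{\cN}(A)$ the level structure is carried along via $\varphi$ from the fixed $t$, so no new level-structure field is introduced beyond what is already built into $A$; the relevant assertion is only about the pair $(\varphi_{q,\beta},A_{q,\beta})$ as in Section \ref{s:isog}, and for that \cite[Theorem 11.1]{cox} gives both the existence over $H_q$ and, via the fact that distinct proper $\oh_q$-ideal classes have Galois-conjugate and hence distinct $j$-invariants, the minimality.
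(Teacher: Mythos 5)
Your proposal is correct and takes essentially the same route as the paper: Proposition \ref{prop:inert} combined with the main theorem of complex multiplication \cite[Theorem 11.1]{cox} gives the model of $A_{q,\beta}$ together with its CM over $H_q$ and the descent of the isogeny class, while coprimality of $q$ with $N$ (degree reasons) gives membership in $\Isog^{\cN}(A)$, exactly as in the paper's short proof. Only note that your closing discussion of minimality of the field of definition and of the level structure $t$ is superfluous, since in the paper's usage ``field of definition $H_q$'' merely means the class is fixed by $\Gal(\bar K/H_q)$ (equivalently, admits a representative over $H_q$), and your Galois action on $\Hom(A,A_{q,\beta})$ fixes $\varphi_{q,\beta}$ only up to $\Aut(A)=\oh_K^{\times}$, which is precisely what fixing the isomorphism class requires.
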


\begin{proof}
By Proposition \ref{prop:inert}, the elliptic curve $\C/\Lambda_{q,\beta}$ has CM by $\oh_q$. The proposition is then a consequence of the main theorem of complex multiplication \cite[Theorem 11.1]{cox}.
Note that the assumption that $q$ is coprime to $N$ guarantees that the classes $(\varphi_{q,\beta}, A_{p,q,\beta})$ belong to $\Isog^{\cN}(A)$. 
\end{proof}

\begin{lemma}\label{lem:qinert}
Let $q$ be a prime which is inert in $K$, and let $u_K:=\vert \oh_K^\times \vert/2$. Then the extension $H_q/H$ is cyclic of order $(q+1)/u_K$.
\end{lemma}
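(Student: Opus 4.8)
The plan is to reduce the statement to a computation of the class number of the order $\oh_q\subset\oh_K$ of conductor $q$, via the main theorem of complex multiplication, and then to extract cyclicity from the standard exact sequence relating $\Pic(\oh_q)$ and $\Pic(\oh_K)$. Recall that $\Gal(H_q/K)\cong\Pic(\oh_q)$ and $\Gal(H/K)\cong\Pic(\oh_K)$, and that the inclusion $\oh_q\hookrightarrow\oh_K$ induces, compatibly with the Artin maps and with restriction of automorphisms, the natural surjection $\Pic(\oh_q)\twoheadrightarrow\Pic(\oh_K)$, $\mathfrak a\mapsto\mathfrak a\oh_K$ (see \cite[\S\S 7--9]{cox}). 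In particular $H\subseteq H_q$, and $\Gal(H_q/H)$ is identified with $\ker\!\big(\Pic(\oh_q)\to\Pic(\oh_K)\big)$, so that $[H_q:H]=h(\oh_q)/h_K$ where $h_K=|\Pic(\oh_K)|$.

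For the degree, I would invoke the analytic class number formula for orders \cite[Theorem 7.24]{cox}, which in the notation of the excerpt (where $-d_K$ is the discriminant of $K$) reads
\[
h(\oh_q)=\frac{h_K\, q}{[\oh_K^\times:\oh_q^\times]}\left(1-\left(\tfrac{-d_K}{q}\right)\tfrac1q\right),
\]
with $\left(\tfrac{-d_K}{q}\right)$ the Kronecker symbol. Since $q$ is inert in $K$ we have $\left(\tfrac{-d_K}{q}\right)=-1$ (so in particular $q\nmid d_K$), whence the Euler factor equals $(q+1)/q$. It remains to note that $\oh_q^\times=\{\pm1\}$: any root of unity lying in $\oh_q=\Z+q\oh_K$ must be $\pm1$, because $i\notin\Z+q\Z[i]$ and the primitive cube roots of unity are not in $\Z+q\Z[\zeta_3]$ for any prime $q$. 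Hence $[\oh_K^\times:\oh_q^\times]=|\oh_K^\times|/2=u_K$, so $h(\oh_q)=h_K(q+1)/u_K$ and therefore $[H_q:H]=(q+1)/u_K$.

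For cyclicity, I would use the exact sequence underlying the proof of \cite[Theorem 7.24]{cox},
\[
1\longrightarrow \oh_K^\times/\oh_q^\times\longrightarrow (\oh_K/q\oh_K)^\times/(\Z/q\Z)^\times\longrightarrow \Pic(\oh_q)\longrightarrow \Pic(\oh_K)\longrightarrow 1,
\]
which exhibits $\Gal(H_q/H)\cong\ker\!\big(\Pic(\oh_q)\to\Pic(\oh_K)\big)$ as a quotient of $(\oh_K/q\oh_K)^\times/(\Z/q\Z)^\times$. As $q$ is inert, $\oh_K/q\oh_K\cong\F_{q^2}$, so $(\oh_K/q\oh_K)^\times\cong\F_{q^2}^\times$ is cyclic; hence $(\oh_K/q\oh_K)^\times/(\Z/q\Z)^\times$ is cyclic of order $q+1$, and so is its quotient $\Gal(H_q/H)$. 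Combined with the degree computation, $\Gal(H_q/H)$ is cyclic of order $(q+1)/u_K$, as claimed. The argument is essentially an assembly of standard facts from CM theory and I do not anticipate a serious obstacle; the only points deserving care are matching the sign of the Kronecker symbol with the inertness hypothesis and checking $\oh_q^\times=\{\pm1\}$ uniformly, so that the formula stays clean even when $\oh_K^\times\neq\{\pm1\}$ (where it shows $H_q=H$ can occur, e.g.\ $q=2$, $K=\Q(\sqrt{-3})$).
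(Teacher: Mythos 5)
Your proof is correct, and it draws on the same circle of ideas (the class field theory of orders in Cox, \S 7--9) as the paper, but it is organized differently. The paper's proof is a one-liner: it quotes the Artin reciprocity isomorphism $(\oh_K/q\oh_K)^\times/\oh_K^\times(\Z/q\Z)^\times\simeq \Gal(H_q/H)$ of \cite[Eq.\ (7.27)]{cox} and then uses that $(\oh_K/q\oh_K)^\times=\F_{q^2}^\times$ is cyclic when $q$ is inert, so that both the cyclicity and the order $(q+1)/u_K$ are read off from this single quotient (the order computation, in particular the fact that the image of $\oh_K^\times$ contributes exactly a factor $u_K$, is left implicit in ``the result follows''). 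You instead split the statement: you get the degree $[H_q:H]=h(\oh_q)/h_K=(q+1)/u_K$ from the analytic class number formula for the order $\oh_q$ together with the explicit check $\oh_q^\times=\{\pm1\}$, and you get cyclicity from the exact sequence $1\to\oh_K^\times/\oh_q^\times\to(\oh_K/q\oh_K)^\times/(\Z/q\Z)^\times\to\Pic(\oh_q)\to\Pic(\oh_K)\to1$, identifying $\Gal(H_q/H)$ with the kernel on the right and hence with a quotient of the cyclic group $\F_{q^2}^\times/\F_q^\times$ of order $q+1$. Since that exact sequence is precisely what underlies \cite[Eq.\ (7.27)]{cox}, the two arguments are equivalent in substance; what your version buys is that the unit-index point (why the answer is $(q+1)/u_K$ rather than $q+1$) is made fully explicit, at the cost of a detour through the class number formula that the paper avoids.
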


\begin{remark}
Note that $u_K=1 \text{ if } d_K\neq 3,4$.
\end{remark}

\begin{proof}
Artin reciprocity yields an isomorphism
\begin{equation}\label{iso:gal}
(\oh_K/q\oh_K)^\times/\oh_K^\times(\Z/q\Z)^\times\simeq \Gal(H_q/H),
\end{equation}
by mapping $c$ to the Artin symbol $[c\oh_K, H_q/H]$ (see \cite[Eq. (7.27)]{cox}). Since $q$ is assumed to be inert, we have $(\oh_K/q\oh_K)^\times=\F_{q^2}^\times$ and the result follows.
\end{proof}

\begin{proposition}\label{prop:trans}
Let $q$ be an odd prime which is inert in $K$ and coprime to $N$. The action of the Galois group $\Gal(H_q/H)$ on the subset $\{ (\varphi_{q,\beta}, A_{q,\beta}) \: \vert \: \beta\in \PP^1(\F_q) \}\subset \Isog^{\cN}(A)$ is simply transitive. 
\end{proposition}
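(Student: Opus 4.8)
The plan is to exploit the Galois-theoretic description of the set $\{(\varphi_{q,\beta},A_{q,\beta})\}_{\beta\in\PP^1(\F_q)}$ in terms of $\oh_q$-ideals, together with the Artin reciprocity isomorphism \eqref{iso:gal} from Lemma \ref{lem:qinert}. The cardinality bookkeeping already matches: by Proposition \ref{prop:inert} each $\Lambda_{q,\beta}$ is a proper fractional $\oh_q$-ideal, and there are $q+1$ values of $\beta\in\PP^1(\F_q)$, while by Lemma \ref{lem:qinert} we have $\vert\Gal(H_q/H)\vert=(q+1)/u_K$. Since $q$ is coprime to $d_K$, and for $d_K\neq 3,4$ one has $u_K=1$, a simply transitive action on a set of size $q+1$ is what one expects; the cases $d_K=3,4$ require a small additional check because then $u_K\in\{2,3\}$ and the lattices $\Lambda_{q,\beta}$ get identified in groups under the $\oh_K^\times$-action. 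I will address the generic case first and then indicate the modification.

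First I would recall that the main theorem of complex multiplication \cite[Theorem 11.1]{cox} describes the Galois action of $\sigma\in\Gal(H_q/H)$ on an elliptic curve $\C/\fa$ with CM by $\oh_q$ (with $\fa$ a proper fractional $\oh_q$-ideal) via $(\C/\fa)^\sigma\cong\C/\fb^{-1}\fa$, where $\fb$ is an $\oh_q$-ideal whose Artin symbol $[\fb,H_q/H]$ equals $\sigma$; moreover this is compatible with isogenies from the fixed base curve $A=\C/\oh_K$ in the sense that the kernel of $\varphi_{q,\beta}$ is a cyclic $\oh_K$-submodule of $A$ of order $q$, and conjugating by $\sigma$ permutes these kernels according to the same ideal-multiplication rule. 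Concretely, the $q+1$ lattices $\Lambda_{q,\beta}$ are exactly the $q+1$ proper fractional $\oh_q$-ideals in $K$ that contain $\oh_K$ with index dividing $q$ (equivalently, the sublattices of $\frac1q\oh_K$ of the right type), and the set of proper fractional $\oh_q$-ideals modulo $K^\times$ — i.e.\ the Picard group $\Pic(\oh_q)$ — has order $(q+1)/u_K$, acting simply transitively on this set of $q+1$ lattices when $u_K=1$ (and on the $(q+1)/u_K$ homothety classes in general). The key point is that distinct $\beta$ give homothety-inequivalent lattices when $u_K=1$: this follows because $\Lambda_{q,\beta}=\Lambda_{q,\beta'}$ up to $\C^\times$-scaling would force $j(\tau_{q,\beta})=j(\tau_{q,\beta'})$, but $j$ is injective on this set since the $\tau_{q,\beta}$ represent distinct points of $Y_0(q)$ lying over the single CM point $j(\tau)\in Y(1)$, and the fibre of $Y_0(q)\to Y(1)$ over a point with CM by $\oh_K$ (with $q$ inert, $q\nmid d_K$) consists of exactly $q+1$ distinct points.

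Then I would conclude as follows. By Lemma \ref{lem:qinert}, $\Gal(H_q/H)\cong\Pic(\oh_q)$ via Artin reciprocity, and by the main theorem of CM this Galois group acts on $\{(\varphi_{q,\beta},A_{q,\beta})\}$ through the multiplication action of $\Pic(\oh_q)$ on proper fractional $\oh_q$-ideal classes, which — since every $\Lambda_{q,\beta}$ is such an ideal and there are exactly $(q+1)/u_K$ homothety classes, freely permuted by $\Pic(\oh_q)$ — is simply transitive after we check that no two of the $q+1$ cycles coincide; for $u_K=1$ this is the injectivity of $\beta\mapsto(\varphi_{q,\beta},A_{q,\beta})$ just established, and for $d_K=3,4$ one notes that the $\oh_K^\times$-action identifies the lattices in orbits of size $u_K$, but the level structure $\varphi_{q,\beta}(t)$ distinguishes them (the automorphism group $\oh_K^\times$ acts nontrivially on $A[\cN]$ hence moves the point of order $N$), so the isomorphism classes $(\varphi_{q,\beta},A_{q,\beta})$ in $\Isog^\cN(A)$ remain distinct. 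Hence the action is both transitive and free, i.e.\ simply transitive.

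\medskip
\textbf{Main obstacle.} The genuinely delicate point is not the transitivity — that is immediate from the main theorem of CM and the ideal-class description — but the \emph{freeness}, i.e.\ verifying that the $q+1$ pairs $(\varphi_{q,\beta},A_{q,\beta})$ are pairwise non-isomorphic in $\Isog^\cN(A)$, especially in the exceptional cases $d_K=3,4$ where distinct lattices $\Lambda_{q,\beta}$ can be homothetic. Resolving this requires carefully tracking the $\Gamma_1(N)$-level structure $\varphi_{q,\beta}(t)$ under the $\oh_K^\times$-action and confirming it breaks the potential identifications; this is where I would spend the most care, using that $\oh_K^\times$ acts faithfully on $A[\cN]\cong\Z/N\Z$ (which holds as $d_K$ is coprime to $N$ and $N\geq 5$, so $-1$ already acts nontrivially).
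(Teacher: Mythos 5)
There is a genuine gap, and it lies exactly where you claim to be most careful: the cases $d_K=3,4$. In $\Isog^{\cN}(A)$ two isogenies $(\varphi_1,A_1)$ and $(\varphi_2,A_2)$ define the same class as soon as their kernels are related by an element of $\Aut(A)/\langle\pm1\rangle=\oh_K^\times/\langle\pm1\rangle$; the image $\varphi(t)$ of the level structure is not part of the data of a class, so it cannot ``distinguish'' classes as you assert. Since $q$ is inert, $\oh_K^\times/\langle\pm1\rangle$ acts freely on the $q+1$ cyclic subgroups of order $q$ (an eigenvector for $[\zeta]$, $\zeta$ a nontrivial unit, would force $-d_K$ to be a square mod $q$), so the $q+1$ pairs $(\varphi_{q,\beta},A_{q,\beta})$ fall into exactly $(q+1)/u_K$ distinct classes. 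Your conclusion that they remain $q+1$ pairwise distinct classes is therefore false for $d_K\in\{3,4\}$, and worse, it is incompatible with the statement you are proving: a group of order $(q+1)/u_K$ (Lemma \ref{lem:qinert}) cannot act simply transitively on a set of $q+1>(q+1)/u_K$ elements. The collapse by $u_K$ is precisely what makes the cardinalities match; this is how the paper proceeds (count the classes as $(q+1)/u_K$, observe via Proposition \ref{prop:descend} that they lie in $\Isog_q^{\cN}(A)$ so the $\Gal(H_q/H)$-action is free, and deduce transitivity from equality of cardinalities). A second, smaller inaccuracy: you identify ``the set of proper fractional $\oh_q$-ideals modulo $K^\times$, i.e.\ $\Pic(\oh_q)$'' as having order $(q+1)/u_K$. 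In fact $\vert\Pic(\oh_q)\vert=h_K(q+1)/u_K$ for $q$ inert; the group of order $(q+1)/u_K$ that is isomorphic to $\Gal(H_q/H)$ via \eqref{iso:gal} is the kernel of $\Pic(\oh_q)\to\Pic(\oh_K)$, and it is this kernel (not all of $\Pic(\oh_q)$) that permutes the homothety classes of the $\Lambda_{q,\beta}$, which all lie in a single fibre of that map. As written, your transitivity argument implicitly assumes $h_K=1$.

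A further point of framing: what you call ``freeness'' is really injectivity of the indexing $\beta\mapsto(\varphi_{q,\beta},A_{q,\beta})$, which is a different issue from freeness of the Galois action (no nontrivial stabilisers). For $u_K=1$ injectivity of the indexing is immediate (distinct $\beta$ give distinct kernels and $\Aut(A)/\langle\pm1\rangle$ is trivial), so the $j$-invariant/$Y_0(q)$ discussion is unnecessary and, as stated, not a proof (distinct points of $Y_0(q)$ need not have distinct target $j$-invariants a priori). The freeness of the $\Gal(H_q/H)$-action still has to be addressed; in the paper it comes from the fact that the classes have CM by $\oh_q$ and field of definition $H_q$ (Propositions \ref{prop:inert} and \ref{prop:descend}), after which simple transitivity follows from the count $(q+1)/u_K=\vert\Gal(H_q/H)\vert$.
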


\begin{proof}
As in Section \ref{s:isog}, $\Gal(\bar H/H)$ naturally acts on $\Isog^{\cN}(A)$ since $A$ is defined over $H$. There are $q+1$ isogenies from $A$ of degree $q$, namely the isogenies $\varphi_{q,\beta}$ for $\beta\in \PP^1(\F_q)$. Any isogeny $\varphi : A\lra A'$ is completely determined by its kernel $A[\varphi] \subset A(\bar H)$. Two isomorphism classes $(\varphi_1, A_1)$ and $(\varphi_2, A_2)$ are equal if and only if there exists $\psi\in \Aut(A)/\langle \pm 1\rangle$ such that $\psi(A[\varphi_1])=A[\varphi_2]$ (the effect of the automorphism $-1$ being trivial). Since $A$ has CM  by $\oh_K$, we have $\Aut(A)=\oh_K^\times$. The set $\{ (\varphi_{q,\beta}, A_{q,\beta}) \: \vert \: \beta\in \PP^1(\F_q) \}$ of isomorphism classes of isogenies of degree $q$ therefore has order $(q+1)/u_K$. By Proposition \ref{prop:descend}, since $q$ is inert we have $\{ (\varphi_{q,\beta}, A_{q,\beta}) \: \vert \: \beta\in \PP^1(\F_q) \} \subset \Isog_q^{\cN}(A)$ (in the notations of Section \ref{s:isog}). It follows that the action of $\Gal(H_q/H)$ on $\{ (\varphi_{q,\beta}, A_{q,\beta}) \: \vert \: \beta\in \PP^1(\F_q) \}$ is simple. The transitivity then follows from the fact that  $\{ (\varphi_{q,\beta}, A_{q,\beta}) \: \vert \: \beta\in \PP^1(\F_q) \} $ and $\Gal(H_q/H)$ have the same order by Lemma \ref{lem:qinert}.
\end{proof}

\begin{remark}\label{rem:qsplit}
In contrast, if $q$ is an odd prime that splits in $K$, then the proof of Proposition \ref{prop:inert} shows that for exactly two choices of $\beta\in \F_q$, say $\beta_1$ and $\beta_2$, the constant term $\beta^2-d_K\beta+d_K(d_K+1)/4$ of the quadratic equation satisfied by $\tau_{q,\beta}$ is divisible by $q$. Hence, for $i\in \{1,2\}$, $\tau_{q,\beta_i}$ satisfies the equation $q\tau_{q,\beta_i}^2+(d_K-2\beta_i)\tau_{q,\beta_i}+(\beta_i^2-d_K\beta_i+d_K(d_K+1)/4)/q=0$ with coprime coefficients. By \cite[Lemma 7.5]{cox}, $\Lambda_{q,\beta_i}$ is a proper fractional $\langle 1, q\tau_{q,\beta_i}\rangle$-module for $i\in \{1,2\}$. But $\langle 1, q\tau_{q,\beta}\rangle=\oh_K$ for any $\beta\in \F_q$, hence the elliptic curves $\C/\Lambda_{q,\beta_1}$ and $\C/\Lambda_{q,\beta_2}$ have CM by $\oh_K$ and can be defined over $H$. 
Alternatively, 
writing $q=\mathfrak{q}\bar{\mathfrak{q}}$ for some prime ideal $\mathfrak{q}$ of $K$, these two elliptic curves along with their cyclic $q$-isogenies can be described as $\varphi_{\mathfrak{q}} : A\lra A/A[\mathfrak{q}]$ and $\varphi_{\bar{\mathfrak{q}}} : A\lra A/A[\bar{\mathfrak{q}}]$. In conclusion, there are $q-1$ cyclic $q$-isogenies with CM by $\oh_q$. Their isogeny classes form a set of order $(q-1)/u_K$ given by a single orbit under the action of $\Gal(H_q/H)$, which in the split case has order $(q-1)/u_K$ by \eqref{iso:gal}. Working under the assumption that $q$ is inert is not strictly speaking necessary for our method, but it does simplify the notations and arguments a little bit.
\end{remark}

\subsection{Explicit $\Gamma_1(N)$-level structure}\label{s:levelstruc}

Recall that $A_\C=\C/\oh_K$ is an elliptic curve with CM by $\oh_K$ over $H$, $\oh_K=\langle 1,\tau\rangle$ with $\tau:=(-d_K+\sqrt{-d_K})/2$ satisfying $\tau^2+d_K\tau+d_K(d_K+1)/4=0$, and $\cN$ is a cyclic $N$-ideal of $\oh_K$.


\begin{proposition}\label{prop:t}
With the above notations, a generator $t$ of the cyclic group $A[\cN]$ must be of the form $t=(c\tau+d)/N + \langle 1, \tau \rangle$ for some integers $c, d \in \Z$ with $\gcd(c,d,N)=1$ and $c \not\equiv 0 \pmod N$. 
\end{proposition}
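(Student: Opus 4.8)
The plan is to first reduce the point $t$ to the $N$-torsion, then extract the two numerical conditions one at a time. For the reduction, note that $\oh_K/\cN\cong\Z/N\Z$ is in particular killed by $N$, so $N\oh_K\subseteq\cN$; hence every $\cN$-torsion point is $N$-torsion, i.e. $A[\cN]\subseteq A[N]$. Since $A_\C=\C/\langle 1,\tau\rangle$ with $\langle 1,\tau\rangle=\oh_K$, the $N$-torsion is $\tfrac1N\langle 1,\tau\rangle/\langle 1,\tau\rangle$, so every element of $A[\cN]$ can be written $t=(c\tau+d)/N+\langle 1,\tau\rangle$ for integers $c,d$ (uniquely determined modulo $N$). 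This already yields the shape of $t$ asserted in the statement, and it remains to pin down that $\gcd(c,d,N)=1$ and $c\not\equiv 0\pmod N$.

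For the first condition I would compute the order of $t$ directly: $mt=0$ in $\C/\langle 1,\tau\rangle$ exactly when $N\mid mc$ and $N\mid md$, and the least such positive $m$ is $\operatorname{lcm}\!\bigl(N/\gcd(N,c),\,N/\gcd(N,d)\bigr)=N/\gcd(N,c,d)$; so $t$ has order $N/\gcd(c,d,N)$. Since $A[\cN]$ is cyclic of order $N=|\oh_K/\cN|$, a generator $t$ has order exactly $N$, forcing $\gcd(c,d,N)=1$.

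The remaining, slightly less automatic, point is $c\not\equiv 0\pmod N$, and this is the only real step. I would argue by contradiction: if $N\mid c$ then $t=d/N+\langle 1,\tau\rangle$ with $\gcd(d,N)=1$ by the previous step, so multiplying $t$ by an inverse of $d$ modulo $N$ (which keeps us inside the subgroup $A[\cN]$) shows $\tfrac1N+\langle 1,\tau\rangle\in A[\cN]$. Under the $\oh_K$-action on $A=\C/\oh_K$ this means $\alpha/N\in\oh_K$ for every $\alpha\in\cN$, i.e. $\cN\subseteq N\oh_K$; passing to quotients then exhibits $\oh_K/N\oh_K\cong(\Z/N\Z)^2$ (order $N^2$) as a quotient of $\oh_K/\cN\cong\Z/N\Z$ (order $N$), which is absurd for $N\ge 2$. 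This contradiction gives $c\not\equiv 0\pmod N$ and finishes the proof. The one thing to notice is that the hypothesis $\oh_K/\cN\cong\Z/N\Z$ — equivalently, that $\cN$ is a primitive cyclic ideal, which is exactly what the Heegner hypothesis supplies — is precisely what rules out $\cN\subseteq N\oh_K$ and hence forbids $c\equiv 0$; without it the statement would fail.
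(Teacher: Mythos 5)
Your proof is correct, but your key step is genuinely different from the paper's. Both arguments agree on the routine part: $N\oh_K\subseteq\cN$ places $A[\cN]$ inside $A[N]=\frac1N\oh_K/\oh_K$, giving the shape $t=(c\tau+d)/N+\langle 1,\tau\rangle$, and the condition $\gcd(c,d,N)=1$ is just the statement that a generator has order $N$ (the paper treats this as immediate; your order computation $N/\gcd(c,d,N)$ fills in the same point). Where you diverge is the crux, $c\not\equiv 0\pmod N$. The paper argues via complex multiplication: the quotient $A/A[\cN]\cong\C/\cN^{-1}$ has CM by the maximal order $\oh_K$, whereas if $c\equiv 0$ the quotient would be $\C/\langle 1/N,\tau\rangle\simeq\C/\langle 1,N\tau\rangle$, which has CM by the non-maximal order $\oh_N$ (Cox, Lemma 7.5) --- a contradiction. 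You instead run a purely elementary index argument: $1/N+\oh_K\in A[\cN]$ forces $\cN\subseteq N\oh_K$, which is impossible since $[\oh_K:\cN]=N<N^2=[\oh_K:N\oh_K]$. Your route is more elementary and self-contained (no input from the theory of orders in imaginary quadratic fields), and it isolates exactly where the hypothesis $\oh_K/\cN\cong\Z/N\Z$ is used; the paper's route is slightly heavier but has the side benefit that the CM-type observation it rests on (that $A/A[\cN]$ has CM by $\oh_K$) is reused immediately afterwards in Remark \ref{rem:t} to constrain the admissible values of $d$. Either proof is acceptable.
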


\begin{proof}
Observe that $A[\cN]=\cN^{-1}/\oh_K$. The canonical isogeny $\varphi_{\cN} : A\lra A/A[\cN]$ is defined over $H$ and given over $\C$ by the quotient isogeny $\C/\oh_K \lra \C/\cN^{-1}$. Note in particular that $A/A[\cN]$ has CM by $\oh_K$. By definition, $t$ is a generator of the cyclic subgroup $A[\cN]$, hence there exist integers $c$ and $d$ with $\gcd(c,d,N)=1$ such that $t=(c\tau+d)/N + \langle 1, \tau \rangle$. Note that only the classes of $c$ and $d$ modulo $N$ matter in this expression for $t$. If $c\equiv 0 \pmod N$, then $d$ is coprime to $N$, $A[\cN]=\langle 1/N + \langle 1,\tau \rangle\rangle$, and $\varphi_{\cN}$ is the quotient isogeny $\C/\langle 1, \tau \rangle \lra \C/\langle 1/N, \tau \rangle$. But $\C/\langle 1/N, \tau \rangle\simeq \C/\langle 1, N\tau \rangle$ has CM by $\oh_N$ \cite[Lemma 7.5]{cox}, contradicting the fact that $A/A[\cN]$ has CM by $\oh_K$. Thus, $c\not\equiv 0 \pmod N$. 
\end{proof}

\begin{remark}\label{rem:t}
The fact that $A/A[\cN]$ has CM by $\oh_K$ places restrictions on the possible choices of the integer $d$ in Proposition \ref{prop:t}. For instance, in the case where $N$ is prime, the generator $t$ of $A[\cN]$ must, up to multiplication by an element of $(\Z/N\Z)^\times$, be of the form $t=(\tau+d)/N + \langle 1, \tau \rangle$ for one of the two choices of $d\in \Z/N\Z$ in Remark \ref{rem:qsplit} such that $\C/\langle 1, (\tau+d)/N\rangle$ has CM by $\oh_K$. Indeed, in this case $A/A[\cN]=\C/\langle 1, t\rangle$ must have CM by $\oh_K$ and Remark \ref{rem:qsplit} applies since $N$ splits in $K$ by the Heegner hypothesis. 
\end{remark}


Fix a choice of $\Gamma_1(N)$-level structure $t\in A[\cN]$. By Proposition \ref{prop:t}, $t=(c\tau+d)/N + \langle 1, \tau \rangle$ for some $c, d\in \Z$ with $N \nmid c$ and $\gcd(c,d,N)=1$. Let $a, b, k \in \Z$ be such that $ad-bc-kN=1$ (possible by the gcd condition). Then the matrix 
$\gamma=\gamma_t:=\left(\begin{smallmatrix} 
a & b \\ c & d
\end{smallmatrix}\right)\in \mathrm{M}_2(\Z)$ reduces modulo $N$ into $\SL_2(\Z/N\Z)$. By modifying the entries of $\gamma$ modulo $N$ if necessary, we may and will assume that $\gamma=\left(\begin{smallmatrix} 
a & b \\ c & d
\end{smallmatrix}\right) \in \SL_2(\Z)$. This does not affect $t=(c\tau+d)/N + \langle 1,\tau \rangle$. Note that if $c=1$, then we may for instance take
$\gamma=\left(\begin{smallmatrix} 
1 & d-1 \\ 1 & d
\end{smallmatrix}\right)\in \SL_2(\Z)$. Multiplication by $c\tau+d$ yields an isomorphism 
\[
\varphi_{\cN} : (\C/\langle 1, \gamma(\tau)\rangle, 1/N +\langle 1, \gamma(\tau)\rangle) \overset{\sim}{\lra} (\C/\langle 1, \tau \rangle, (c\tau+d)/N + \langle 1, \tau \rangle)=(A, t)
\]
of elliptic curves with $\Gamma_1(N)$-level structures. It follows that the point $(A, t)\in Y_1(N)(\C)=\Gamma_1(N)\setminus \cH$ is represented by $\Gamma_1(N) \gamma(\tau)=\Gamma_1(N)\frac{a\tau+b}{c\tau+d}$. 

\begin{definition}\label{def:phit}
Let $q$ be an odd prime not dividing $c$, and let $\beta\in \PP^1(\F_q)$. Given the above notations, define $\tau_{q,\beta}^t\in \cH$ to be $q\gamma(\tau)$ if $\beta = \infty$ and $(\gamma(\tau)+\beta)/q$ if $\beta\neq \infty$. Let $\Lambda_{q,\beta}^t:=\langle 1, \tau_{q,\beta}^t \rangle$ and define the isogeny
\[
\varphi_{q,\beta}^t : 
\begin{cases}
\C/\langle 1, \gamma(\tau) \rangle \overset{\quot}{\lra}  \C/\langle 1/q, \gamma(\tau) \rangle \overset{[q]}{\lra} \C/\Lambda_{q,\beta}^t, & \beta=\infty \\
\C/\langle 1, \gamma(\tau) \rangle =  \C/\langle 1, \gamma(\tau)+\beta \rangle \overset{\quot}{\lra} \C/\Lambda_{q,\beta}^t, & \beta\neq\infty.
\end{cases}
\]
\end{definition}

The composed isogeny 
\begin{equation}\label{def:psit}
\psi_{q,\beta}^t:=\varphi_{q,\beta}^t\circ [(c\tau+d)^{-1}]  : \C/\oh_K \overset{[(c\tau+d)^{-1}]}{\lra} \C/\langle 1, \gamma(\tau) \rangle \overset{\varphi_{q,\beta}^t}{\lra} \C/\Lambda_{q,\beta}^t
\end{equation}
has kernel of size $q$. Hence $\psi_{q,\beta}^t$ must be isomorphic (in the sense of Section \ref{s:isog}) to $\varphi_{q, \beta'}$ of Section \ref{s:qisog} for some $\beta'\in \PP^1(\F_q)$.
Indeed, we have 
\[
\ker(\psi_{q,\beta}^t) =
\begin{cases}
\langle (\tau+c^{-1}d)/q+\langle 1,\tau \rangle  \rangle, & \beta=\infty \\
\langle 1/q+\langle 1,\tau \rangle  \rangle, & a+c\beta\equiv 0 \pmod q \\
\langle (\tau+(a+c\beta)^{-1}(b+d\beta))/q+\langle 1,\tau \rangle  \rangle, & \beta\neq \infty, a+c\beta \not\equiv  0\pmod q,
\end{cases}
\] 
and thus
\[
(\psi_{q,\beta}^t, \C/\Lambda_{q,\beta}^t) =
\begin{cases}
(\varphi_{q,c^{-1}d}, A_{q,c^{-1}d}), & \beta=\infty \\
(\varphi_{q,\infty}, A_{q,\infty}), & a+c\beta\equiv 0 \pmod q \\
(\varphi_{q, (a+c\beta)^{-1}(b+d\beta)}, A_{q, (a+c\beta)^{-1}(b+d\beta)}), & \beta\neq \infty, a+c\beta\not\equiv 0 \pmod q,
\end{cases}
\] 
as elements of $\Isog^{\cN}(A)$. In particular, there is an equality of subsets of $\Isog^{\cN}(A)$
\begin{equation}\label{eq:subsets}
\{ (\varphi_{q,\beta}, A_{q,\beta}) \mid \beta\in \PP^1(\F_q) \} = \{ (\psi_{q,\beta}^t, \C/\Lambda_{q,\beta}^t \rangle) \mid \beta\in \PP^1(\F_q) \},
\end{equation}
and Proposition \ref{prop:trans} can be restated as:

\begin{proposition}\label{prop:transt}
With the above notations, let $q$ be an odd prime which is inert in $K$ and coprime to $cN$. The action of the Galois group $\Gal(H_q/H)$ on the subset $\{ (\psi_{q,\beta}^t, \C/\Lambda_{q,\beta}^t) \mid \beta\in \PP^1(\F_q) \}$ of $\Isog^{\cN}(A)$ is simply transitive. 
\end{proposition}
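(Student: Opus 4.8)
The plan is to reduce Proposition \ref{prop:transt} to Proposition \ref{prop:trans}, which has already been established. The key observation is that the composed isogeny $\psi_{q,\beta}^t$ defined in \eqref{def:psit} is, by construction, isomorphic as an element of $\Isog^{\cN}(A)$ to one of the isogenies $\varphi_{q,\beta'}$ of Section \ref{s:qisog}. First I would note that the explicit case analysis displayed just before the statement (giving $\ker(\psi_{q,\beta}^t)$ and then the identification of $(\psi_{q,\beta}^t, \C/\Lambda_{q,\beta}^t)$ with the appropriate $(\varphi_{q,\beta'}, A_{q,\beta'})$) yields the equality of subsets of $\Isog^{\cN}(A)$ recorded in \eqref{eq:subsets}:
\[
\{ (\varphi_{q,\beta}, A_{q,\beta}) \mid \beta\in \PP^1(\F_q) \} = \{ (\psi_{q,\beta}^t, \C/\Lambda_{q,\beta}^t) \mid \beta\in \PP^1(\F_q) \}.
\]
Since these are literally the same subset of $\Isog^{\cN}(A)$, the Galois group $\Gal(H_q/H)$ acts on one if and only if it acts on the other, and the action is simply transitive on one if and only if it is simply transitive on the other.

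Concretely, the proof would run as follows. By Proposition \ref{prop:transt}'s hypotheses, $q$ is an odd prime inert in $K$ and coprime to $cN$; in particular $q$ is coprime to $c$, so the isogenies $\varphi_{q,\beta}^t$ and hence $\psi_{q,\beta}^t$ of Definition \ref{def:phit} and \eqref{def:psit} are defined, and $q$ is coprime to $N$ so these define classes in $\Isog^{\cN}(A)$. The case analysis preceding the statement shows that $\beta \mapsto (\psi_{q,\beta}^t, \C/\Lambda_{q,\beta}^t)$ is a bijection from $\PP^1(\F_q)$ onto $\{ (\varphi_{q,\beta'}, A_{q,\beta'}) \mid \beta'\in \PP^1(\F_q) \}$ (it is surjective because the three cases together hit every $\varphi_{q,\beta'}$, and it is injective because both sides have $q+1$ elements mapping onto a set of that cardinality — alternatively, distinct $\beta$ give distinct kernels $\ker(\psi_{q,\beta}^t)$, visibly from the displayed formulas). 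This establishes \eqref{eq:subsets}. Now apply Proposition \ref{prop:trans}, which asserts exactly that $\Gal(H_q/H)$ acts simply transitively on the left-hand side of \eqref{eq:subsets}; since the two sides coincide as $\Gal(\bar H/H)$-sets inside $\Isog^{\cN}(A)$, the action on $\{ (\psi_{q,\beta}^t, \C/\Lambda_{q,\beta}^t) \mid \beta\in \PP^1(\F_q) \}$ is likewise simply transitive.

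I do not expect any genuine obstacle here: the real content lies in the explicit kernel computations and the main-theorem-of-complex-multiplication input, all of which have been absorbed into Proposition \ref{prop:trans} (via Propositions \ref{prop:inert}, \ref{prop:descend}, \ref{prop:transt}'s antecedents, and Lemma \ref{lem:qinert}) and into the displayed case analysis. The only point requiring a word of care is making sure the displayed identifications of $(\psi_{q,\beta}^t, \C/\Lambda_{q,\beta}^t)$ with elements of $\{(\varphi_{q,\beta'}, A_{q,\beta'})\}$ are correct as \emph{isomorphism classes} in $\Isog^{\cN}(A)$ — i.e., that one has matched kernels up to the action of $\Aut(A)/\{\pm 1\} = \oh_K^\times/\{\pm 1\}$ — but since $q$ is coprime to $cd_K$ this is exactly what the kernel formulas encode, and for $d_K \neq 3, 4$ (where $u_K = 1$) it is immediate. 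Thus the proof is essentially a one-line deduction: invoke \eqref{eq:subsets} and Proposition \ref{prop:trans}.
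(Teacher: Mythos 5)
Your proposal is correct and matches the paper's own treatment: the paper presents Proposition \ref{prop:transt} precisely as a restatement of Proposition \ref{prop:trans} via the set equality \eqref{eq:subsets}, which is established by the displayed kernel computations. Your deduction—identify the two subsets of $\Isog^{\cN}(A)$ and transport the simply transitive $\Gal(H_q/H)$-action—is exactly the intended argument.
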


\subsection{Explicit $pq$-isogenies}

Retain the notations of the previous subsection. In particular, $t=(c\tau+d)/N + \langle 1, \tau \rangle\in A[\cN]$ with $c,d\in \Z$, $\gcd(c,d,N)=1$, $c\not\equiv 0 \pmod N$, and $\gamma=\gamma_t=\left(\begin{smallmatrix} 
a & b \\ c & d
\end{smallmatrix}\right)\in \SL_2(\Z)$.

Let $q$ be an odd prime which is coprime to $cd_K N$. 
Let $p$ be an auxiliary distinct odd prime which is also coprime to $cd_K N$. Consider the lattices $\Lambda^t_{p,q,\beta}:=\langle 1, \tau_{p,q,\beta}^t\rangle$ with $\beta\in \PP^1(\F_q)$ of index $pq$ in $\oh_K$ where $\tau_{p,q,\beta}^t:=p\tau_{q,\beta}^t$. Consider the isogenies $\psi_{p,q,\beta}^t : \C/\oh_K\lra \C/\Lambda_{p,q,\beta}^t$ obtained by composing $\psi_{q,\beta}^t$ defined in \eqref{def:psit} with the map
\[
\C/\langle 1, \tau_{q,\beta}^t\rangle \overset{\quot}{\lra} \C/\langle 1/p, \tau_{q,\beta}^t\rangle \overset{[p]}{\lra} \C/\langle 1, \tau_{p,q,\beta}^t\rangle.
\]
The isogenies $\psi_{p,q,\beta}^t$ have degree $d_{\psi_{p,q,\beta}^t}=pq$ with kernel spanned by $\ker(\psi_{q,\beta}^t)$ together with the point $(\tau+c^{-1}d)/p + \langle 1, \tau\rangle$. 

\begin{proposition}\label{prop:pqinert}
If $q$ is an odd prime which is inert in $K$ and coprime to $cN$, and $p$ is a distinct auxiliary prime not dividing $cd_K N \vert c\tau+d\vert^2$, then $\Lambda_{p,q,\beta}^t$ is a proper fractional $\oh_{pq}$-ideal for all $\beta\in \PP^1(\F_q)$.
\end{proposition}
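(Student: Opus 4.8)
The plan is to mimic the proof of Proposition~\ref{prop:inert}: exhibit an explicit integral quadratic equation satisfied by $\tau_{p,q,\beta}^t$ whose coefficients have no common factor, apply \cite[Lemma 7.5]{cox} to deduce that $\Lambda_{p,q,\beta}^t=\langle 1,\tau_{p,q,\beta}^t\rangle$ is a proper fractional ideal of the order $\langle 1,A\tau_{p,q,\beta}^t\rangle$, where $A>0$ is the leading coefficient, and finally identify this order with $\oh_{pq}$.

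First I would record a quadratic equation for $\gamma(\tau)=\tfrac{a\tau+b}{c\tau+d}$. Substituting $\tau=\gamma^{-1}(\gamma(\tau))$ into $\tau^2+d_K\tau+\tfrac{d_K(d_K+1)}{4}=0$ and clearing denominators produces an integral binary quadratic form $A_0X^2+B_0X+C_0$ with root $\gamma(\tau)$. Since this form is obtained from the principal form $[1,d_K,\tfrac{d_K(d_K+1)}{4}]$ by precomposition with $\gamma^{-1}\in\SL_2(\Z)$, it is primitive and has discriminant $B_0^2-4A_0C_0=-d_K$. A direct computation of the $X^2$-coefficient gives $A_0=d^2-d_K cd+\tfrac{d_K(d_K+1)}{4}c^2=\Norm_{K/\Q}(c\tau+d)=\vert c\tau+d\vert^2$; this identification is the crux of why the hypothesis on $p$ is phrased in terms of $\vert c\tau+d\vert^2$. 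By \cite[Lemma 7.5]{cox}, $\langle 1,A_0\gamma(\tau)\rangle$ is the order of discriminant $-d_K$, namely $\oh_K$; comparing imaginary parts with $\oh_K=\langle 1,\tau\rangle$ forces $A_0\gamma(\tau)=m+\tau$ for some $m\in\Z$.

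Next I would substitute. With $\tau_{q,\infty}^t=q\gamma(\tau)$, resp.\ $\tau_{q,\beta}^t=(\gamma(\tau)+\beta)/q$ for $\beta\neq\infty$, and $\tau_{p,q,\beta}^t=p\tau_{q,\beta}^t$, plugging into $A_0X^2+B_0X+C_0=0$ yields
\[
A_0(\tau_{p,q,\infty}^t)^2+pq\,B_0\,\tau_{p,q,\infty}^t+p^2q^2C_0=0
\]
in the case $\beta=\infty$, and
\[
A_0q^2(\tau_{p,q,\beta}^t)^2+pq(B_0-2A_0\beta)\,\tau_{p,q,\beta}^t+p^2(A_0\beta^2-B_0\beta+C_0)=0
\]
in the case $\beta\neq\infty$. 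I would then show these coefficients have no common prime factor $\ell$. If $\ell\nmid pq$, then $\ell$ divides $A_0,B_0,C_0$, contradicting primitivity. The case $\ell=p$ is impossible because $p\nmid A_0=\vert c\tau+d\vert^2$ by hypothesis (this simultaneously rules out $p$ dividing the leading coefficient). The case $\ell=q$ is excluded in two steps: first $q\nmid A_0$, since $q$ is inert and $q\nmid c$, so $c\tau+d\notin q\oh_K$ and hence $q\nmid\Norm(c\tau+d)$; second, in the $\beta\neq\infty$ case one uses $4A_0(A_0\beta^2-B_0\beta+C_0)=(2A_0\beta-B_0)^2+d_K$, so that $q\mid(A_0\beta^2-B_0\beta+C_0)$ would force $-d_K$ to be a square mod $q$ (recall $q$ odd, $q\nmid d_K$ since $q$ is inert), contradicting the inertness of $q$. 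This last point is the only genuinely non-bookkeeping step, and it is identical to the argument in Proposition~\ref{prop:inert}.

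Finally, \cite[Lemma 7.5]{cox} gives that $\Lambda_{p,q,\beta}^t$ is a proper fractional ideal of $\langle 1,A\tau_{p,q,\beta}^t\rangle$, with $A=A_0$ when $\beta=\infty$ and $A=A_0q^2$ when $\beta\neq\infty$. Using $A_0\gamma(\tau)=m+\tau$, one computes in both cases that $A\tau_{p,q,\beta}^t$ equals $pq\tau$ plus a rational integer, whence $\langle 1,A\tau_{p,q,\beta}^t\rangle=\langle 1,pq\tau\rangle=\Z+pq\oh_K=\oh_{pq}$, as desired. I expect no serious obstacle beyond keeping the bookkeeping straight; the one conceptual ingredient is recognizing the leading coefficient of the form for $\gamma(\tau)$ as $\vert c\tau+d\vert^2$, which is exactly the quantity appearing in the hypothesis.
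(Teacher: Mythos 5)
Your proof is correct, and it follows the same overall skeleton as the paper's: produce a primitive integral quadratic satisfied by $\tau^t_{p,q,\beta}$, apply \cite[Lemma 7.5]{cox}, and identify the resulting order with $\langle 1, pq\tau\rangle=\oh_{pq}$, the crux in both cases being that $\vert c\tau+d\vert^2\gamma(\tau)-\tau\in\Z$ (your derivation of this via the $\SL_2(\Z)$-transform of the principal form is equivalent to the paper's direct computation \eqref{eq:gammatau}). The one structural difference is how primitivity at the prime $q$ is obtained: the paper does not redo any quadratic-residue argument here, but instead invokes the identification \eqref{eq:subsets} of $\{(\psi^t_{q,\beta},\C/\Lambda^t_{q,\beta})\}$ with $\{(\varphi_{q,\beta'},A_{q,\beta'})\}$ together with Proposition \ref{prop:inert} to know that $\Lambda^t_{q,\beta}$ is already a proper $\oh_q$-ideal, which (again via \cite[Lemma 7.5]{cox}) pins down the leading coefficient $q^2\vert c\tau+d\vert^2$ (resp.\ $\vert c\tau+d\vert^2$ for $\beta=\infty$) of the primitive equation for $\tau^t_{q,\beta}$; only the passage from $q$ to $pq$ (scaling by $p$, with $p\nmid q\vert c\tau+d\vert^2$) is then checked by hand. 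You instead handle both primes at once, which forces you to re-run the inert/quadratic-residue step for the transformed form $[A_0,B_0,C_0]$ and to add the small observation $q\nmid A_0=\Norm(c\tau+d)$ (valid since $q$ is inert and $q\nmid c$). Your route is more self-contained — it does not depend on \eqref{eq:subsets} or Proposition \ref{prop:inert} — while the paper's is shorter because it recycles work already done; both are complete and correct.
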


\begin{proof}
Begin by observing that 
\begin{equation}\label{eq:gammatau}
\gamma(\tau):=\frac{a\tau + b}{c\tau +d}=\frac{\tau}{\vert c\tau+d\vert^2} + \frac{ac\vert\tau\vert^2-bcd_K+bd}{\vert c\tau+d\vert^2}. 
\end{equation}
Consequently, we have $\Q(\tau_{p,q,\beta}^t)=\Q(\tau_{q,\beta}^t)=\Q(\gamma(\tau))=\Q(\tau)=K$. By \eqref{eq:subsets}, $\C/\Lambda_{q,\beta}^t$ is equal to $A_{q,\beta', \C}=\C/\Lambda_{q,\beta'}$ for some $\beta'\in \PP^1(\F_q)$. Since $q$ is inert, it follows by Proposition \ref{prop:inert} that $\Lambda_{q,\beta}^t=\Lambda_{q,\beta'}$ is a proper fractional $\oh_q$-ideal. 

Let us first suppose that $\beta\neq \infty$.
By \cite[Lemma 7.5]{cox}, $\tau_{q,\beta}^t$ must satisfy a quadratic equation of the form $q^2\vert c\tau+d\vert^2 X^2 +AX+B$, with coefficients $A, B \in \Z$ such that $\gcd(A,B, q^2\vert c\tau+d\vert^2)=1$. But then $\tau_{p,q,\beta}^t$ satisfies the equation $q^2\vert c\tau+d\vert^2 X^2 +ApX+Bp^2$. The coefficients have $\gcd$ equal to $1$ since $p$ and $q$ are distinct and $p\nmid \vert c\tau+d\vert^2$. By \cite[Lemma 7.5]{cox}, $\Lambda_{p,q,\beta}^t$ is then a proper fractional $\langle 1, q^2\vert c\tau+d\vert^2\tau_{p,q,\beta}^t\rangle$-ideal. The result follows by observing, using \eqref{eq:gammatau}, that 
\[
q^2\vert c\tau+d\vert^2\tau_{p,q,\beta}^t=pq\vert c\tau+d\vert^2(\gamma(\tau)+\beta)=pq(\tau+ac\vert\tau\vert^2-bcd_K+bd+\beta\vert c\tau+d\vert^2),
\]
hence $\langle 1, q^2\vert c\tau+d\vert^2\tau_{p,q,\beta}^t\rangle=\langle 1, pq\tau\rangle=\oh_{pq}$.

If $\beta=\infty$, then, by \cite[Lemma 7.5]{cox}, $\tau_{q,\infty}^t$ must satisfy a quadratic equation of the form $\vert c\tau+d\vert^2 X^2 +AX+B$, with coefficients $A, B \in \Z$ such that $\gcd(A,B, \vert c\tau+d\vert^2)=1$. But then $\tau_{p,q,\infty}^t$ satisfies the equation $\vert c\tau+d\vert^2 X^2 +ApX+Bp^2$. The coefficients have $\gcd$ equal to $1$ since $p\nmid \vert c\tau+d\vert^2$. By \cite[Lemma 7.5]{cox}, $\Lambda_{p,q,\infty}^t$ is then a proper fractional $\langle 1, \vert c\tau+d\vert^2\tau_{p,q,\infty}^t\rangle$-ideal. The result follows by observing that 
\[
\vert c\tau+d\vert^2\tau_{p,q,\infty}^t=pq\vert c\tau+d\vert^2\gamma(\tau)=pq\tau+pq(ac\vert\tau\vert^2-bcd_K+bd),
\]
hence $\langle 1, \vert c\tau+d\vert^2\tau_{p,q,\infty}^t\rangle=\langle 1, pq\tau\rangle=\oh_{pq}$.
\end{proof}

\begin{proposition}\label{prop:pqfod}
Let $q$ be an odd prime which is inert in $K$ and coprime to $cN$. Let $p$ be a distinct auxiliary odd prime which is coprime to $c d_K N \vert c\tau+d\vert^2$. For all $\beta\in \PP^1(\F_q)$, there exists an elliptic curve $A_{p,q,\beta}^t$ with CM by $\oh_{pq}$ over the ring class field $H_{pq}$ such that $A_{p,q,\beta,\C}^t=\C/\Lambda_{p,q,\beta}^t$ and the isogeny of complex tori $\psi_{p,q,\beta}^t$ descends to an isogeny $\psi_{p,q,\beta}^t : A\lra A^t_{p,q,\beta}$ giving rise to an isomorphism class $(\psi_{p,q,\beta}^t, A_{p,q,\beta}^t)\in \Isog_{pq}^{\cN}(A)$ with field of definition $H_{pq}$.
\end{proposition}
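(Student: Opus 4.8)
The plan is to mirror the structure of Proposition \ref{prop:descend} and Proposition \ref{prop:pqinert}, deducing everything from the main theorem of complex multiplication as formulated in \cite[Theorem 11.1]{cox}. By Proposition \ref{prop:pqinert}, under the stated coprimality hypotheses the lattice $\Lambda_{p,q,\beta}^t$ is a proper fractional $\oh_{pq}$-ideal, so the complex torus $\C/\Lambda_{p,q,\beta}^t$ is an elliptic curve with CM by the order $\oh_{pq}$ of conductor $pq$. The main theorem of complex multiplication then guarantees that this elliptic curve, together with its complex multiplication, admits a model $A_{p,q,\beta}^t$ defined over the ring class field $H_{pq}$ of conductor $pq$, and that $A_{p,q,\beta,\C}^t=\C/\Lambda_{p,q,\beta}^t$ under the fixed embedding $H_{pq}\hookrightarrow \C$ from Section \ref{s:convention}. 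This is verbatim the argument of Proposition \ref{prop:descend}, applied with conductor $pq$ in place of $q$.

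Next I would address the descent of the isogeny $\psi_{p,q,\beta}^t : \C/\oh_K \lra \C/\Lambda_{p,q,\beta}^t$. Since $A$ is defined over $H \subset H_{pq}$ and $A_{p,q,\beta}^t$ is defined over $H_{pq}$, and since an isogeny between complex elliptic curves is determined by its (finite) kernel, it suffices to note that the kernel of $\psi_{p,q,\beta}^t$ — explicitly described in Section \ref{s:qisog} as spanned by $\ker(\psi_{q,\beta}^t)$ together with the point $(\tau+c^{-1}d)/p+\langle 1,\tau\rangle$ — consists of $\bar K$-torsion points of $A$; one then checks that the subgroup scheme it generates is actually defined over $H_{pq}$, again by the theory of complex multiplication (the $pq$-torsion of $A$ and its relevant cyclic subgroups become rational once one passes to the ring class field $H_{pq}$, as the Galois action on cyclic subgroups of a given CM order is governed by the Artin map as in \eqref{iso:gal}). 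Hence $\psi_{p,q,\beta}^t$ descends to an isogeny $\psi_{p,q,\beta}^t : A\lra A_{p,q,\beta}^t$ over $H_{pq}$.

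It then remains to check that the resulting isomorphism class lies in $\Isog_{pq}^{\cN}(A)$: the target has CM by $\oh_{pq}$ by construction, so the class is in $\Isog_{pq}^{\cN}(A)$ provided the kernel of $\psi_{p,q,\beta}^t$ intersects $A[\cN]$ trivially. This follows from the hypothesis that $pq$ is coprime to $N$: the kernel has order $pq$, while $A[\cN]$ has order $N$, so the intersection is trivial. One should also record that the field of definition is exactly $H_{pq}$ and not smaller — but for the statement as phrased it suffices that $H_{pq}$ works, which is what the main theorem of complex multiplication provides. The main obstacle, such as it is, is the bookkeeping around the field of definition of the kernel subgroup scheme; but this is entirely parallel to Proposition \ref{prop:descend}, and indeed the proof can be written in two sentences by citing Proposition \ref{prop:pqinert} and \cite[Theorem 11.1]{cox}, exactly as the analogous propositions earlier in Section \ref{s:explicit} are proved.

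\begin{proof}
By Proposition \ref{prop:pqinert}, the elliptic curve $\C/\Lambda_{p,q,\beta}^t$ has CM by $\oh_{pq}$. The statement is then a direct consequence of the main theorem of complex multiplication \cite[Theorem 11.1]{cox}, exactly as in the proof of Proposition \ref{prop:descend}: the curve together with its complex multiplication descends to $H_{pq}$, and the isogeny of complex tori $\psi_{p,q,\beta}^t$, being determined by its finite kernel of $\bar K$-torsion points, descends to an isogeny $\psi_{p,q,\beta}^t : A\lra A_{p,q,\beta}^t$ over $H_{pq}$. Since $pq$ is coprime to $N$, the kernel of $\psi_{p,q,\beta}^t$ (of order $pq$) intersects $A[\cN]$ (of order $N$) trivially, so the resulting class belongs to $\Isog_{pq}^{\cN}(A)$, with field of definition $H_{pq}$.
\end{proof}
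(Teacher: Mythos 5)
Your proof is correct and follows exactly the paper's own argument: Proposition \ref{prop:pqinert} gives that $\C/\Lambda_{p,q,\beta}^t$ has CM by $\oh_{pq}$, and the rest is the main theorem of complex multiplication \cite[Theorem 11.1]{cox}, just as in Proposition \ref{prop:descend}. The additional remarks you make (descent of the isogeny via its kernel and the coprimality of $pq$ with $N$ ensuring the class lies in $\Isog^{\cN}_{pq}(A)$) are accurate and match the parenthetical observation the paper itself makes in the proof of Proposition \ref{prop:descend}.
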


\begin{proof}
By Proposition \ref{prop:pqinert}, the elliptic curves $\C/\Lambda_{p,q,\beta}^t$ have CM by $\oh_{pq}$, and the result is a consequence of the main theorem of complex multiplication \cite[Theorem 11.1]{cox}.
\end{proof}

\begin{proposition}\label{transpq}
Let $q$ be an odd prime which is inert in $K$ and coprime to $cN$. Let $p$ be a distinct auxiliary odd prime which is coprime to $c d_K N \vert c\tau+d\vert^2$.
The Galois group $\Gal(H_{pq}/H_p)$ acts simply transitively on the subset $\{ (\psi_{p,q,\beta}^t, A_{p,q,\beta}^t) \: \vert \: \beta\in \PP^1(\F_q) \} \subset \Isog^{\cN}(A)$. The action is determined by the action of $\Gal(H_q/H)$ on $\{ (\psi_{q,\beta}^t, \C/\Lambda_{q,\beta}^t) \: \vert \: \beta\in \PP^1(\F_q) \}$ upon restricting automorphisms from $H_{pq}$ to $H_q$.
\end{proposition}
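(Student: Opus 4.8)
The plan is to reduce the statement to the corresponding assertion for a \emph{fixed} $p$-isogeny out of $A$, to which the arguments of Propositions~\ref{prop:trans} and~\ref{prop:transt} then apply essentially verbatim. Write $m$ for a representative of $c^{-1}d \bmod p$ and set $C_p := \langle (\tau+m)/p + \langle 1,\tau\rangle \rangle \subset A[p]$; by the explicit description of $\ker(\psi_{p,q,\beta}^t)$ recalled before the statement, $C_p$ is independent of $\beta$ and $\ker(\psi_{p,q,\beta}^t) = \ker(\psi_{q,\beta}^t) \oplus C_p$. I would first observe, exactly as in the proof of Proposition~\ref{prop:pqinert} (an application of \cite[Lemma~7.5]{cox} to the quadratic equation satisfied by $(\tau+m)/p$, whose constant coefficient is congruent to $c^{-2}\vert c\tau+d\vert^2$ modulo $p$), that the hypothesis $p \nmid \vert c\tau+d\vert^2$ forces $B := A/C_p$, with $B_\C = \C/\langle 1, (\tau+m)/p\rangle$, to have CM by the order $\oh_p$ of conductor $p$. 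By \cite[Theorem~11.1]{cox} the curve $B$ and the quotient isogeny $\mu : A \to B$ are then defined over $H_p$, and $(\mu, B)$ represents a class in $\Isog^{\cN}(A)$ (its kernel being a $p$-group, it meets $A[\cN]$ trivially). Moreover each $\psi_{p,q,\beta}^t$ factors as $A \xrightarrow{\mu} B \xrightarrow{\nu_\beta} A_{p,q,\beta}^t$ with $\nu_\beta$ a cyclic $q$-isogeny.

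The main step is then to run the proof of Proposition~\ref{prop:trans} with $B$, which satisfies $\Aut(B) = \oh_p^\times = \{\pm 1\}$, in place of $A$. By~\eqref{eq:subsets} and Proposition~\ref{prop:inert} the subgroups $\ker(\psi_{q,\beta}^t)$ exhaust the $q+1$ cyclic $q$-subgroups of $A$; applying $\mu$ on $q$-torsion, the $\ker(\nu_\beta)$ exhaust the $q+1$ cyclic $q$-subgroups of $B$, and by Propositions~\ref{prop:pqinert} and~\ref{prop:pqfod} their quotients $A_{p,q,\beta}^t$ all have CM by $\oh_{pq}$ with field of definition exactly $H_{pq}$. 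Since $\Aut(B)/\{\pm1\}$ is trivial, the $q+1$ pairs $(\nu_\beta, A_{p,q,\beta}^t)$ are \emph{pairwise distinct} in $\Isog^{\cN}(B)$, and transporting back along the fixed $\mu$ I claim the classes $(\psi_{p,q,\beta}^t, A_{p,q,\beta}^t) \in \Isog^{\cN}(A)$ are distinct as well. Here one uses that $C_p$ is not stabilised by any non-trivial unit of $\oh_K$; this is equivalent to $B$ having CM by $\oh_p$ rather than $\oh_K$, but can also be seen directly: when $p$ splits in $K$ the only lines of $A[p]$ fixed by a non-trivial unit are the two $\mathfrak{p}$-eigenlines (the two embeddings $\oh_K \hookrightarrow \F_p$ separate a non-trivial root of unity, since $p \neq 3$), and $C_p$ is one of these precisely when $p \mid \vert c\tau+d\vert^2$; when $p$ is inert no line is fixed, since the image in $\F_{p^2}^\times$ of a non-trivial root of unity of $\oh_K$ lies outside $\F_p^\times$ (using $p \nmid d_K$ and $p$ odd). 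Finally, by \cite[Theorem~11.1]{cox} each class $(\psi_{p,q,\beta}^t, A_{p,q,\beta}^t)$ has field of definition exactly $H_{pq}$, so $\Gal(H_{pq}/H_p)$ acts \emph{freely} on this set of $q+1$ elements; and by the class number formula for orders \cite[Theorem~7.24]{cox} — equivalently by the analogue of Lemma~\ref{lem:qinert}, Artin reciprocity giving $\Gal(H_{pq}/H_p) \cong (\oh_p/q\oh_p)^\times / \oh_p^\times (\Z/q\Z)^\times \cong \F_{q^2}^\times/\F_q^\times$ because $q$ is inert in $K$ and coprime to $p$ — one has $\vert \Gal(H_{pq}/H_p) \vert = q+1$. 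A free action of a finite group on a set of the same cardinality is simply transitive, which gives the first assertion.

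For the compatibility with the action of $\Gal(H_q/H)$: the sub-isogeny $A \xrightarrow{\psi_{q,\beta}^t} \C/\Lambda_{q,\beta}^t$ of $\psi_{p,q,\beta}^t$ is the unique one whose target has CM by $\oh_q$, hence is attached Galois-canonically to $(\psi_{p,q,\beta}^t, A_{p,q,\beta}^t)$; since $\C/\Lambda_{q,\beta}^t$ has field of definition $H_q$ (Proposition~\ref{prop:descend}), any $\sigma \in \Gal(H_{pq}/H_p)$ acts on it through $\sigma\vert_{H_q}$, and the permutation of $\PP^1(\F_q)$ induced by $\sigma$ on $\{(\psi_{p,q,\beta}^t, A_{p,q,\beta}^t)\}$ therefore reduces, modulo $\oh_K^\times$, to the permutation of $\{(\psi_{q,\beta}^t, \C/\Lambda_{q,\beta}^t)\}$ induced by $\sigma\vert_{H_q}$ as in Proposition~\ref{prop:transt}. (Consistently, the restriction map $\Gal(H_{pq}/H_p) \to \Gal(H_q/H)$ is surjective with kernel of order $u_K$, matching the fact that the set upstairs has $u_K$ times as many elements as the set downstairs.) The step I expect to be the main obstacle is the distinctness claim in the middle paragraph: keeping track of the interplay between the hypothesis $p \nmid \vert c\tau+d\vert^2$, the CM order of $B$, and the unit action on $A[p]$ — in particular the fact that the classes at level $q$ collapse by the factor $u_K$ while those at level $pq$ do not, which is exactly what makes the cardinality count work.
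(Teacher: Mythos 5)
Your proposal is correct, and although its skeleton (isolate the $p$-part $C_p$ of the kernel, check the Galois action preserves the set, match the cardinality of the set with that of $\Gal(H_{pq}/H_p)$, get freeness from the curves having CM by $\oh_{pq}$, then deduce the compatibility with level $q$) is the same as the paper's, the decisive counting step is done by a genuinely different route --- and, notably, your counts are the accurate ones. The paper asserts that the set $\{(\psi^t_{p,q,\beta},A^t_{p,q,\beta}) : \beta\in\PP^1(\F_q)\}$ has $(q+1)/u_K$ elements and that restriction gives $\Gal(H_{pq}/H_p)\simeq\Gal(H_q/H)$ as in \eqref{resgal} via $H_{pq}=H_p\cdot H_q$; when $d_K\in\{3,4\}$ both statements fail, since $[H_{pq}:H_pH_q]=u_K$ and the class number formula for orders gives $[H_{pq}:H_p]=q+1$, the two inaccuracies being by the same factor $u_K$ so that the simple-transitivity conclusion survives. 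You instead use the hypothesis $p\nmid\vert c\tau+d\vert^2$ (exactly as in Proposition \ref{prop:pqinert}, via the congruence of the constant term with $c^{-2}\vert c\tau+d\vert^2$ modulo $p$) to show that no nontrivial unit of $\oh_K$ stabilises $C_p$ --- equivalently that $B=A/C_p$ has CM by $\oh_p$ --- whence the $q+1$ classes are pairwise distinct in $\Isog^{\cN}(A)$, and you compute $\vert\Gal(H_{pq}/H_p)\vert=q+1$ directly; this buys a proof of the first assertion that is uniform in $d_K$ and does not rely on \eqref{resgal}. Your handling of the second assertion --- passing to the unique sub-isogeny whose target has CM by $\oh_q$ and observing that the permutation upstairs determines the one downstairs only ``modulo $\oh_K^\times$'', the restriction $\Gal(H_{pq}/H_p)\to\Gal(H_q/H)$ being surjective with kernel of order $u_K$ --- is likewise the correct formulation when $u_K>1$ (it coincides with the paper's statement for $d_K\neq 3,4$) and is exactly what the later applications (Proposition \ref{prop:galcyc} and the proof of Theorem \ref{thm:gr}) require. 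I see no gap in your argument; the only assertion left informal is the reciprocity description $\Gal(H_{pq}/H_p)\simeq\F_{q^2}^\times/\F_q^\times$, but since you only use its cardinality, the class number formula you cite already suffices.
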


\begin{proof}
Let $\beta\in \PP^1(\F_q)$. 
The isogeny $\psi_{p,q,\beta}^t$ has kernel of size $pq$. The $p$-part of this kernel is generated by $(\tau+c^{-1}d)/p + \langle 1, \tau \rangle$, and is therefore independent of $\beta$. It corresponds to the isomorphism class of an isogeny from $A$ to the elliptic curve $A_p:=A_{p, c^{-1}d}$ defined over $H_p$ of Section \ref{s:qisog} (if $p$ is inert in $K$, then $A_p$ has CM by $\oh_p$, while for $p$ split it can happen that $A_p$ has CM by $\oh_K$ as explained in Remark \ref{rem:qsplit}. In any case, the isomorphism class is defined over $H_p$). In particular, $\Gal(H_{pq}/H_p)$ fixes the $p$-part of the isogeny $\psi_{p,q,\beta}^t$, and as a result its action on the set $\{ (\psi_{p,q,\beta}^t, A_{p,q,\beta}^t) \: \vert \: \beta\in \PP^1(\F_q) \}$ is well-defined. Note that the latter set has order $(q+1)/u_K$ (as in the proof of Proposition \ref{prop:trans}, using \eqref{eq:subsets}). By Proposition \ref{prop:pqfod}, we have an inclusion $\{ (\psi_{p,q,\beta}^t, A_{p,q,\beta}^t) \: \vert \: \beta\in \PP^1(\F_q) \}\subset \Isog_{pq}^{\cN}(A)$ (in the notation of Section \ref{s:isog}) and the Galois action is simple.
Since $p$ and $q$ are distinct primes, we have $H_p\cap H_q=H$ and $H_{pq}=H_p\cdot H_q$ \cite[Proposition 1.7]{mythesis}. Thus, the natural restriction map from $H_{pq}$ to $H_q$ induces an isomorphism of Galois group 
\begin{equation}\label{resgal}
\Gal(H_{pq}/H_p)\simeq \Gal(H_q/H).
\end{equation}
By Lemma \ref{lem:qinert}, $\Gal(H_{pq}/H_p)$ thus has order equal to the one of $\{ (\psi_{p,q,\beta}^t, A_{p,q,\beta}^t) \: \vert \: \beta\in \PP^1(\F_q) \}$, and consequently the action is transitive.
\end{proof}

\begin{definition}\label{def:delta}
With the above notations, 
given an odd prime $q$ coprime to $c d_K N$ and inert in $K$, an auxiliary disctinct prime $p$ coprime to $c d_K N \vert c\tau+d\vert^2$, and $\beta\in \PP^1(\F_q)$, the cycles associated to the isomorphism classes $(\psi_{p,q,\beta}^t, A_{p,q,\beta}^t)\in \Isog^{\cN}_{pq}(A)$ are denoted
\[
\tilde{\Delta}^{\HC}_{p, q, \beta}:=\tilde{\Delta}^{\HC}_{\psi_{p, q, \beta}^t} \quad \text{ and } \quad \tilde{\Delta}^{\GHC}_{p, q, \beta}:=\tilde{\Delta}^{\GHC}_{\psi_{p, q, \beta}^t}
\]
in the notation of Definition \ref{rem:intcycles}.
By Propositions \ref{prop:fod} and \ref{prop:pqinert}, they are defined over the field compositum $F_{pq}=K_{\cN}\cdot H_{pq}\subset K^{\ab}\subset \bar \Q$.
\end{definition}

\begin{proposition}\label{prop:galcyc}
Fix a $\Gamma_1(N)$-level structure $t\in A[\cN]$, and
let $p$ and $q$ be distinct odd primes coprime to $c d_K N$ with $q$ inert in $K$ and $p\nmid \vert c\tau+d\vert^2$.
The action of $\Gal(F_{pq}/F_p)$ on the subset $\{ \tilde{\Delta}^{\HC}_{p,q, \beta} \mid \beta\in \PP^1(\F_{pq}) \}$ of $\CH^{r+1}(W_{k, F_{pq}})_0$ is determined by the action of $\Gal(H_{pq}/H_p)$ on the subset $\{ (\psi_{p,q,\beta}^t, A^t_{p,q,\beta}) \: \vert \: \beta\in \PP^1(\F_q) \}$ of $\Isog_{pq}^{\cN}(A)$ under the restriction map $\Gal(F_{pq}/F_p)\lra \Gal(H_{pq}/H_p)$. In particular, the action is transitive.
\end{proposition}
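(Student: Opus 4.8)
The plan is to relate the Galois action on Heegner cycles to the already-established Galois action on isogeny classes (Proposition \ref{transpq}) by invoking the functoriality of the whole construction. First I would recall that by Proposition \ref{prop:pqfod} each isomorphism class $(\psi_{p,q,\beta}^t, A^t_{p,q,\beta})$ lies in $\Isog^{\cN}_{pq}(A)$ and that, by Proposition \ref{prop:fod}, the associated cycle $\tilde\Delta^{\HC}_{p,q,\beta}$ is defined over $F_{pq}=K_{\cN}\cdot H_{pq}$. The construction $(\varphi, A')\mapsto \tilde\Delta^{\HC}_\varphi$ of Section \ref{s:HC} is visibly compatible with the natural action of $\Gal(\bar K/H)$ on $\Isog^{\cN}(A)$: the data entering the definition of $\Upsilon^{\HC}_\varphi$ — the point $P_\varphi\in Y_1(N)$, the embedding $\iota'_\varphi$ of $(A')^k$ into the fibre $\pi_k^{-1}(P_\varphi)$, and the graph $\Gamma_{[d_\varphi\sqrt{-d_K}]}$ — are all transported by an element $\sigma\in\Gal(\bar K/H)$ to the corresponding data for $(\varphi^\sigma, (A')^\sigma)$, and the projector $\tilde\epsilon_{W_k}$ is defined over $\Q$, hence Galois-equivariant. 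Therefore $(\tilde\Delta^{\HC}_\varphi)^\sigma=\tilde\Delta^{\HC}_{\varphi^\sigma}$ for all $\sigma$.

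Next I would pin down the field over which the level structure and the $p$-part of the construction are rational, so as to identify exactly which subgroup fixes them. The point $P_1=(A,t)$ of $Y_1(N)$ is defined over $K_{\cN}$ by the theory of complex multiplication (\cite[Theorem 11.39]{cox}), and the $p$-part of $\psi^t_{p,q,\beta}$ corresponds to a fixed isomorphism class $(\psi, A_p)$ with $A_p$ defined over $H_p$, independent of $\beta$ (as in the proof of Proposition \ref{transpq}). Consequently every $\sigma\in\Gal(\bar Q/F_p)=\Gal(\bar Q/K_{\cN}H_p)$ fixes both the level structure $t$ and the $p$-part, so the induced action of $\Gal(F_{pq}/F_p)$ on $\{\tilde\Delta^{\HC}_{p,q,\beta}\mid\beta\in\PP^1(\F_q)\}$ is well-defined, and by the equivariance of the previous paragraph it is intertwined, via the restriction map $\Gal(F_{pq}/F_p)\twoheadrightarrow\Gal(H_{pq}/H_p)$, with the action of $\Gal(H_{pq}/H_p)$ on $\{(\psi^t_{p,q,\beta},A^t_{p,q,\beta})\mid\beta\in\PP^1(\F_q)\}$ studied in Proposition \ref{transpq}. (The restriction map is surjective because $F_{pq}=F_p\cdot H_{pq}$, which follows from $F_p=K_{\cN}H_p$, $F_{pq}=K_{\cN}H_{pq}$ and $H_{pq}=H_p\cdot H_q$.)

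Finally, transitivity of the $\Gal(F_{pq}/F_p)$-action is immediate: Proposition \ref{transpq} gives that $\Gal(H_{pq}/H_p)$ acts transitively on the isogeny classes, the map $\Gal(F_{pq}/F_p)\twoheadrightarrow\Gal(H_{pq}/H_p)$ is onto, and the bijection $(\psi^t_{p,q,\beta},A^t_{p,q,\beta})\mapsto\tilde\Delta^{\HC}_{p,q,\beta}$ is equivariant, so transitivity is transported to the cycles. The main obstacle, and the step deserving the most care, is the Galois-equivariance claim $(\tilde\Delta^{\HC}_\varphi)^\sigma=\tilde\Delta^{\HC}_{\varphi^\sigma}$ together with the precise identification of the field of rationality of the level structure and of the $p$-part: one must check that the $\Gamma_1(N)$-structure $\varphi(t)$ on $A'$, which enters $P_\varphi$, transforms correctly under $\Gal(\bar Q/F_p)$ — this is exactly where $K_{\cN}=H(h_A(A[\cN]))$ and the Shimura reciprocity law are used, as in the proof of Proposition \ref{prop:fod} — and that the endomorphism $[d_\varphi\sqrt{-d_K}]$ of $A'$, hence the graph $\Gamma_{[d_\varphi\sqrt{-d_K}]}$, is rational over $H_{pq}$ and moves compatibly. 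Once these rationality facts are in place, the remainder is formal.
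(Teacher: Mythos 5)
Your proposal is correct and follows essentially the same route as the paper: unwind the definition $\tilde{\Delta}^{\HC}_{p,q,\beta}=\tilde{\epsilon}_{W_k}(\iota_{p,q,\beta})_*((\Gamma_{[pq\sqrt{-d_K}]})^r)$, use that $\tilde{\epsilon}_{W_k}$ is defined over $\Q$, that $[pq\sqrt{-d_K}]$ is rational over $H$, and that $\sigma\in\Gal(F_{pq}/F_p)$ fixes $(A,t)$ since it fixes $K_{\cN}$, so the action on cycles is governed by the action on $(\psi^t_{p,q,\beta},A^t_{p,q,\beta})$, and transitivity then comes from Proposition \ref{transpq}. The only cosmetic difference is that you phrase the conclusion via an equivariant indexing of the cycles and explicitly note surjectivity of the restriction map (for which one should also invoke $H_{pq}\cap F_p=H_p$, as the paper does via the isomorphisms of ring class and ray class Galois groups), whereas the paper computes $(\tilde{\Delta}^{\HC}_{p,q,\beta})^\sigma$ term by term; the substance is the same.
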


\begin{proof}
Let $\sigma\in \Gal(F_{pq}/F_p)$ and $\beta\in \PP^1(\F_q)$. Let $t_{p,q,\beta}:=\psi^t_{p,q,\beta}(t)\in A_{p,q,\beta}^t[\cN\cap \oh_{pq}]$ and let $\iota_{p,q,\beta}$ denote the inclusion of $(A_{p,q,\beta}^t)^k$ in $W_k$ as the fibre above $P_{p,q,\beta}:=(A_{p,q,\beta}^t, t_{p,q,\beta})\in X_1(N)(F_{pq})$. By Definitions \ref{rem:hc} and \ref{rem:intcycles}, we have $\tilde{\Delta}_{p,q,\beta}^{\HC}:=\tilde{\epsilon}_{W_K} (\iota_{p,q,\beta})_*((\Gamma_{[pq\sqrt{-d_K}]})^r)\in \CH^{r+1}(W_{k,F_{pq}})_0$, and thus 
\[
(\tilde{\Delta}_{p,q,\beta}^{\HC})^\sigma=\tilde{\epsilon}_{W_K}^\sigma (\iota^\sigma_{p,q,\beta})_*((\Gamma_{[pq\sqrt{-d_K}]^\sigma})^r).
\]
We have $\tilde\epsilon_{W_K}^\sigma=\tilde\epsilon_{W_K}$ since $\tilde\epsilon_{W_k}$ is defined over $\Q$, and $[pq\sqrt{-d_K}]^\sigma=[pq\sqrt{-d_K}]\in \End((A_{p,q,\beta}^t)^\sigma)$ since $\sigma$ fixes $H$. The map $\iota^\sigma_{p,q,\beta}$ is the inclusion of $((A^t_{p,q,\beta})^\sigma)^k$ in $W_k$ as the fibre above the point $P_{p,q,\beta}^\sigma=((A^t_{p,q,\beta})^\sigma, (\psi^t_{p,q,\beta})^\sigma(t^\sigma))$ of $
X_1(N)$. Since $\sigma$ fixes $K_{\cN}$, it fixes $(A, t)\in X_1(N)(K_{\cN})$, so that $P_{p,q,\beta}^\sigma=((A^t_{p,q,\beta})^\sigma, (\psi^t_{p,q,\beta})^\sigma(t))$. Thus, the action is determined by the action on $(\psi^t_{p,q,\beta}, A^t_{p,q,\beta})\in \Isog_{pq}^{\cN}(A)$. The last part of the statement then follows from Proposition \ref{transpq}.
\end{proof}

\section{Asymptotics for Abel--Jacobi images of explicit cycles}\label{s:asymptotics}
 
Let $k=2r\geq 2$ and $N\geq 5$ be integers. Fix an imaginary quadratic field $K$ with ring of integers $\cO_K$ and discriminant $-d_K$ coprime to $N$. Assume that $K$ satisfies the Heegner hypothesis with respect to $N$, and let $\cN$ denote a choice of cyclic $N$-ideal of $\oh_K$.
Let $H$ be the Hilbert class field of $K$. Let $A$ be an elliptic curve with CM by $\oh_K$ over $H$, and choose the embedding $H\hookrightarrow \C$ of Section \ref{s:convention} such that $A_\C=\C/\oh_K$. Let $\tau:=(-d_K+\sqrt{-d_K})/2$ denote the standard generator of $\oh_K$ so that $\oh_K=\langle 1,\tau\rangle$, and fix a $\Gamma_1(N)$-level structure $t\in A[\cN]$. Then $t=(c\tau+d)/N + \langle 1,\tau\rangle$ for some $c, d\in \Z$ with $c\not\equiv 0 \pmod N$ and $\gcd(c,d,N)=1$ by Proposition \ref{prop:t}. As in Section \ref{s:levelstruc}, let $a,b\in \Z$ such that $\gamma:=\gamma_t=\left(\begin{smallmatrix} a& b \\ c & d \end{smallmatrix}\right) \in \SL_2(\Z)$ (which might require translating $c$ and $d$ by some multiples of $N$).

Define the indexing set
\[
\mathcal{I}=\mathcal{I}_t:=\{ (p, q) \mid p>q \text{ odd primes coprime to } c d_K \vert c\tau+d\vert^2, q \text{ inert in } K, p,q \equiv 1\pmod N \}.
\]
Note that this set is infinite by Dirichlet's theorem on primes in arithmetic progressions since $d_K$ and $N$ are coprime.
Using Theorem \ref{thm:CAJ}, we are going to produce asymptotic estimates for the Abel--Jacobi images of the Heegner cycles in the collection 
\begin{equation}\label{collec}
\mathcal{C}:=\{ \tilde\Delta^{\HC}_{p,q,\beta}\in \CH^{r+1}(W_{k, F_{pq}})_0 \mid (p,q) \in \mathcal{I}, \beta \in \PP^1(\F_q)  \}
\end{equation}
(see Definition \ref{def:delta}) as $p/q \to \infty$. As a corollary, we will deduce information about the orders of the algebraic equivalence classes of these cycles when $p/q$ is large.

\begin{definition}\label{def:improper}
Given $(p,q)\in \mathcal{I}$ and $\beta\in \PP^1(\F_q)$, define 
\[
\gamma_{p,q,\beta}:=
\begin{cases}
pq & \beta=\infty \\
p/q & \beta\neq \infty,
\end{cases}
\qquad \text{ and } \qquad \kappa_{p,q,\beta}:=
\begin{cases}
1 & \beta=\infty \\
q & \beta\neq \infty.
\end{cases}
\]
Writing $\tau_{p,q,\beta}^t=:X^t_{p,q,\beta}+iY^t_{p,q,\beta}$ and
using \eqref{eq:gammatau}, we see that $Y^t_{p,q,\beta}=\gamma_{p,q,\beta}\vert c\tau+d\vert^{-2} \sqrt{d_K}/2$ and
\[
X^t_{p,q,\beta}=
\begin{cases}
\vert c\tau+d\vert^{-2}(ac\vert \tau\vert^2-bcd_K+bd-d_K/2)pq & \beta=\infty \\
(\vert c\tau+d\vert^{-2}(ac\vert \tau\vert^2-bcd_K+bd-d_K/2)+\beta)p/q & \beta\neq\infty.
\end{cases}
\]
Consider the convergent improper integral
\[
I^t_{p,q,\beta}:=\int_{Y^t_{p,q,\beta}}^{\infty}(y^2-(Y^t_{p,q,\beta})^2)^r e^{-2\pi y} dy > 0,
\]
and define $J_{p,q,\beta}:=2^{k+1}\pi^{r+1}i^{r}\vert c\tau+d\vert^k(pq)^r\kappa_{p,q,\beta}^k m_{k,k}^2 e^{2\pi i X^t_{p,q,\beta}}I^t_{p,q,\beta}\in \C^\times$.
\end{definition}

\begin{lemma}\label{lem:est}
Let $d$ be the dimension of $S_{k+2}(\Gamma_1(N))$ and choose a basis $(f_1, \ldots, f_d)$ consisting of normalised cuspidal eigenforms for the action of the Hecke algebra. This choice identifies $S_{k+2}(\Gamma_1(N))^\vee$ with $\C^d$, and we let $L$ denote the lattice in $\C^d$ whose elements are the evaluations (via integration) of the elements of the lattice $L_k'\subset S_{k+2}(\Gamma_1(N))^\vee$ at $\vec{\omega}:=(\omega_{f_1}, \ldots, \omega_{f_d})$. Given $(p,q)\in \mathcal{I}$ and $\beta\in \PP^1(\F_q)$, we view $\AJ_{W_k}((2^k k!)^2(pq)^r\tilde\Delta^{\HC}_{p,q,\beta})$ as an element of $\C^d$ by identifying it with the formula displayed on the right hand side of Theorem \ref{thm:CAJ} (this amounts to choosing a fixed representative of $\AJ_{W_k}((2^k k!)^2(pq)^r\tilde\Delta^{\HC}_{p,q,\beta})$ viewed as an element of $\C^d/L$). Then, as a complex vector valued function of $(p,q)\in \mathcal{I}$, $\AJ_{W_k}((2^k k!)^2(pq)^r\tilde\Delta^{\HC}_{p,q,\beta})$ is coordinate-wise asymptotically equivalent to $\vec{J}_{p,q,\beta}=(J_{p,q,\beta}, \ldots, J_{p,q,\beta})\in \C^d$ as $p/q \to \infty$. 
\end{lemma}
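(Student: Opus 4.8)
The plan is to start from the exact formula of Theorem~\ref{thm:CAJ}, substitute $\varphi=\psi_{p,q,\beta}^t$ with degree $d_\varphi=pq$ and $\tau'=\tau_{p,q,\beta}^t$, and then carry out an asymptotic analysis of the line integral as $p/q\to\infty$. Concretely, for each basis eigenform $f=f_i$ with $q$-expansion $f(z)=\sum_{n\geq 1}a_n e^{2\pi i n z}$, I would write
\[
\int_{i\infty}^{\tau'}(z-\tau')^r(z-\bar\tau')^r f(z)\,dz
\]
and change variables to $z=\tau'+iu$ (a vertical contour), so that $z-\tau'=iu$ and $z-\bar\tau'=iu+2i\im(\tau')=i(u+2Y_{p,q,\beta}^t)$. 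This turns the integral into $i^{2r+1}e^{2\pi i X_{p,q,\beta}^t}\int_0^\infty u^r(u+2Y_{p,q,\beta}^t)^r\big(\sum_n a_n e^{-2\pi n Y_{p,q,\beta}^t}e^{-2\pi n u}\big)\,du$ after absorbing $e^{2\pi i n z}=e^{2\pi i n X_{p,q,\beta}^t}e^{-2\pi n Y_{p,q,\beta}^t}e^{-2\pi n u}$. Since $Y_{p,q,\beta}^t=\gamma_{p,q,\beta}\sqrt{d_K}/(2|c\tau+d|^2)\to\infty$ (because $\gamma_{p,q,\beta}\to\infty$ in all cases by Definition~\ref{def:improper}), the $n=1$ term dominates: $e^{-2\pi n Y}$ decays geometrically in $n$, so the tail $\sum_{n\geq 2}$ is $O(e^{-4\pi Y})$ relative to the main term $O(e^{-2\pi Y})$, uniformly once $Y$ is bounded below. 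Thus the integral is asymptotically $a_1\cdot i^{2r+1}e^{2\pi i X_{p,q,\beta}^t}e^{-2\pi Y_{p,q,\beta}^t}\int_0^\infty u^r(u+2Y_{p,q,\beta}^t)^r e^{-2\pi u}\,du$, and since $f$ is normalised, $a_1=1$.

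Next I would relate the remaining integral $\int_0^\infty u^r(u+2Y)^r e^{-2\pi u}\,du$ to the quantity $I^t_{p,q,\beta}=\int_Y^\infty (y^2-Y^2)^r e^{-2\pi y}\,dy$ appearing in Definition~\ref{def:improper}: substituting $y=u+Y$ gives $y^2-Y^2=(u+Y)^2-Y^2=u(u+2Y)$ and $dy=du$, so $I^t_{p,q,\beta}=e^{-2\pi Y}\int_0^\infty u^r(u+2Y)^r e^{-2\pi u}\,du$ exactly. Hence the integral in Theorem~\ref{thm:CAJ} equals, asymptotically, $i^{2r+1}e^{2\pi i X_{p,q,\beta}^t}I^t_{p,q,\beta}$ (the factor $e^{-2\pi Y}$ being already packaged inside $I^t_{p,q,\beta}$). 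Substituting this into the formula of Theorem~\ref{thm:CAJ} and collecting the prefactors — noting $d_\varphi=pq$, so $d_\varphi^r=(pq)^r$, $d_\varphi^k=(pq)^k$; noting $(\tau'-\bar\tau')^r=(2iY_{p,q,\beta}^t)^r=(2i)^r(\gamma_{p,q,\beta}\sqrt{d_K}/(2|c\tau+d|^2))^r$ so that $(\tau'-\bar\tau')^{-r}$ contributes $|c\tau+d|^{2r}(i\sqrt{d_K})^{-r}(\gamma_{p,q,\beta})^{-r}$; and observing that $(pq)^k/\gamma_{p,q,\beta}^r = (pq)^{2r}/\gamma_{p,q,\beta}^r$ equals $(pq)^r\kappa_{p,q,\beta}^{2r}=(pq)^r\kappa_{p,q,\beta}^k$ in both cases $\beta=\infty$ (where $\gamma=pq$, $\kappa=1$) and $\beta\neq\infty$ (where $\gamma=p/q$, $\kappa=q$, and $(pq)^{2r}q^{2r}/p^r=(pq)^r q^{2r}$) — one checks that all the $\sqrt{-d_K}$, $i$, $2$, $\pi$, $|c\tau+d|$ powers combine precisely into the definition of $J_{p,q,\beta}$. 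This is a bookkeeping computation; I would present the matching of constants as a displayed chain of equalities with the cancellations indicated but not belabored.

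Finally, I would package the conclusion: since $\AJ_{W_k}((2^k k!)^2(pq)^r\tilde\Delta^{\HC}_{p,q,\beta})(\omega_{f_i})$ is asymptotically equivalent to $J_{p,q,\beta}$ for each $i=1,\dots,d$, and since the identification of $S_{k+2}(\Gamma_1(N))^\vee$ with $\C^d$ is coordinate-wise via evaluation at $\omega_{f_i}$, the vector $\AJ_{W_k}((2^k k!)^2(pq)^r\tilde\Delta^{\HC}_{p,q,\beta})$ is coordinate-wise asymptotically equivalent to $\vec J_{p,q,\beta}=(J_{p,q,\beta},\dots,J_{p,q,\beta})$. I expect the \textbf{main obstacle} to be twofold: first, justifying that the asymptotics are \emph{uniform} in $\beta\in\PP^1(\F_q)$ — here one must use that $|c\tau+d|$ is fixed (independent of $p,q,\beta$) and that $\gamma_{p,q,\beta}\to\infty$ whenever $p/q\to\infty$ in \emph{either} of the two regimes for $\beta$, so the error term $O(e^{-2\pi Y_{p,q,\beta}^t})$ relative to the main term is genuinely $o(1)$ uniformly; second, the delicate but mechanical verification that the power of $(pq)$ and $\kappa_{p,q,\beta}$ works out identically in the two cases $\beta=\infty$ and $\beta\neq\infty$, which is really the content of the slightly artificial-looking definitions of $\gamma_{p,q,\beta}$ and $\kappa_{p,q,\beta}$. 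Everything else — termwise integration of the absolutely convergent $q$-expansion against the rapidly decaying weight, and the substitution $y=u+Y$ — is routine.
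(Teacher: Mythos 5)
Your proposal is correct and follows essentially the same route as the paper: apply Theorem \ref{thm:CAJ} to $\psi^t_{p,q,\beta}$ (degree $pq$, point $\tau^t_{p,q,\beta}$), pass to a vertical contour, isolate the first Fourier coefficient of each normalised eigenform (the paper packages the tail bound as $\vert f_j(z)-e^{2\pi i z}\vert\le c_j e^{-4\pi \Im(z)}$), and match the constants against $J_{p,q,\beta}$ through the identity $I^t_{p,q,\beta}=e^{-2\pi Y^t_{p,q,\beta}}\int_0^\infty u^r(u+2Y^t_{p,q,\beta})^r e^{-2\pi u}\,du$. Two small points the paper makes explicit and you should too: the hypothesis $\psi^t_{p,q,\beta}(t)=1/N \pmod{\Lambda^t_{p,q,\beta}}$ of Theorem \ref{thm:CAJ} is exactly where the congruences $p,q\equiv 1\pmod N$ in $\mathcal{I}$ are used, and in the case $\beta\neq\infty$ the exponent bookkeeping should read $(pq)^{2r}q^{r}/p^{r}=(pq)^r q^{2r}=(pq)^r\kappa_{p,q,\beta}^k$ (also mind the orientation of $\int_{i\infty}^{\tau'}$, which supplies the sign needed to land exactly on $J_{p,q,\beta}$ rather than $-J_{p,q,\beta}$).
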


\begin{proof}
By Definition \ref{def:delta}, $\tilde\Delta^{\HC}_{p,q,\beta}$ is the Heegner cycle associated to $(\psi^t_{p,q,\beta}, A^t_{p,q,\beta})\in \Isog_{pq}^{\cN}(A)$. We have $A^t_{p,q,\beta, \C}=\C/\langle 1, \tau^t_{p,q,\beta}\rangle$ with $\tau_{p,q,\beta}^t$ equal to $pq\gamma(\tau)$ or $(\gamma(\tau)+\beta)p/q$ depending on whether $\beta=\infty$ or $\beta\neq \infty$. 
Recall that the isogeny $\psi^t_{p,q,\beta} : \C/\langle 1,\tau\rangle \lra \C/\langle 1,\tau^t_{p,q,\beta}\rangle$ is given by mapping $w \pmod{\langle 1,\tau\rangle} \mapsto  pq(c\tau+d)^{-1}w \pmod{\langle 1,\tau^t_{p,q,\beta}\rangle}$ if $\beta=\infty$ and $w \pmod{\langle 1,\tau\rangle} \mapsto p(c\tau+d)^{-1}w \pmod{\langle 1,\tau^t_{p,q,\beta}\rangle}$ if $\beta\neq \infty$. By the assumption that $p$ and $q$ are both congruent to $1$ modulo $N$, we thus have $\psi^t_{p,q,\beta}(t)=1/N \pmod{\langle 1,\tau^t_{p,q,\beta}\rangle}$.
Applying Theorem \ref{thm:CAJ} therefore yields the equality
\[
\AJ_{W_k}((2^k k!)^2(pq)^r\tilde\Delta_{p,q,\beta}^{\HC})=(-2)^r (2\pi i)^{r+1} \vert c\tau+d\vert^k (pq)^r \kappa_{p,q,\beta}^k m_{k,k}^2 \int_{i\infty}^{\tau^t_{p,q,\beta}}(z-\tau^t_{p,q,\beta})^r(z-\bar{\tau}^t_{p,q,\beta})^r \vec{f}(z) dz 
 \]
modulo the lattice $L$,
where $\vec{f}=(f_1, \ldots, f_d)$, and $\kappa_{p,q,\beta}$ is defined in Definition \ref{def:improper}. Writing $\tau^t_{p,q,\beta}=X^t_{p,q,\beta}+iY^t_{p,q,\beta}$ as in Definition \ref{def:improper} and making the change of variables $z=X^t_{p,q,\beta} + iy$ gives the equality
\[
\AJ_{W_k}((2^k k!)^2(pq)^r\tilde\Delta_{p,q,\beta}^{\HC})=2^{k+1}\pi^{r+1}i^{r} \vert c\tau+d\vert^k (pq)^r \kappa_{p,q,\beta}^k m_{k,k}^2  \int_{Y^t_{p,q,\beta}}^{\infty}(y^2-(Y^t_{p,q,\beta})^2)^r \vec{f}(X^t_{p,q,\beta}+iy) dy
\]
modulo the lattice $L$.
For all $1\leq j\leq d$, using the Fourier expansion at $i\infty$ together with the fact that $f_j$ is a normalised cuspidal eigenform, there exists a constant $c_j > 0$ such that 
\begin{equation}\label{est:cusp}
\vert f_j(z)-e^{2\pi i z} \vert \leq c_j e^{-4\pi \Im(z)} \qquad \text{ for all } z\in \cH.
\end{equation}
Let $\vec{c}:=(c_1, \ldots, c_d)\in \C^d$.
Using \eqref{est:cusp}, Definition \ref{def:improper}, and the fact that $Y^t_{p,q,\beta}=\gamma_{p,q,\beta}\vert c\tau+d\vert^{-2} \sqrt{d_K}/2$, we deduce that
\[
\vert \AJ_{W_k}((2^k k!)^2(pq)^r\tilde\Delta_{p,q,\beta}^{\HC})-\vec{J}_{p,q,\beta} \vert \leq 2^{k+1}\pi^{r+1} \vert c\tau+d\vert^k (pq)^r\kappa_{p,q,\beta}^k m_{k,k}^2 e^{-\gamma_{p,q,\beta} \vert c\tau+d\vert^{-2} \pi\sqrt{d_K}} I^t_{p,q,\beta} \vec{c}.
\]
It follows that 
\[
\left\vert \frac{\AJ_{W_k}((2^k k!)^2(pq)^r\tilde\Delta_{p,q,\beta}^{\HC})-\vec{J}_{p,q,\beta}}{\vec{J}_{p,q,\beta}} \right\vert \leq e^{-\gamma_{p,q,\beta}\vert c\tau+d\vert^{-2}\pi\sqrt{d_K}}\vec{c}.
\]
The result follows by noting that $\gamma_{p,q,\beta}\to \infty$ as $p/q\to \infty$.
\end{proof}

\begin{proposition}\label{cor:est}
With the above notations, as $p/q$ tends to $\infty$ for $(p,q)\in \mathcal{I}$, the order of $\AJ_{W_k}(\tilde\Delta_{p,q,\beta}^{\HC})$ becomes large (possibly infinite) in the intermediate Jacobian $J^{r+1}(W_{k,\C})$ for all $\beta\in \PP^1(\F_q)$.
\end{proposition}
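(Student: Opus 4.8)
The plan is a proof by contradiction, playing the asymptotic formula of Lemma~\ref{lem:est} against the discreteness of the points of bounded order in a complex torus. Fix a basis of $S_{k+2}(\Gamma_1(N))$ consisting of normalised eigenforms, identifying $S_{k+2}(\Gamma_1(N))^\vee$ with $\C^d$ and, as in Lemma~\ref{lem:est}, the relevant period module with a full lattice $L\subset\C^d$; for $(p,q)\in\mathcal{I}$ and $\beta\in\PP^1(\F_q)$ let $\vec v_{p,q,\beta}\in\C^d$ denote the representative of $\AJ_{W_k}\bigl((2^k k!)^2(pq)^r\tilde\Delta^{\HC}_{p,q,\beta}\bigr)$ supplied by the right-hand side of Theorem~\ref{thm:CAJ}. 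The target torus $S_{k+2}(\Gamma_1(N))^\vee/L_k$ of $\AJ_{W_k}$ has $\C^d/L$ as a quotient (since $L$ contains the image of $L_k$), and passing to a quotient only decreases orders; moreover the order of $(2^k k!)^2(pq)^r\AJ_{W_k}(\tilde\Delta^{\HC}_{p,q,\beta})$ divides that of $\AJ_{W_k}(\tilde\Delta^{\HC}_{p,q,\beta})$. Hence it suffices to prove that the order of the class of $\vec v_{p,q,\beta}$ in $\C^d/L$ tends to $\infty$ as $p/q\to\infty$.

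Suppose this fails. Then there are a fixed $M\ge 1$, a sequence $(p_i,q_i)\in\mathcal{I}$ with $p_i/q_i\to\infty$, and $\beta_i\in\PP^1(\F_{q_i})$, for which the class of $\vec v_i:=\vec v_{p_i,q_i,\beta_i}$ has order $\le M$; equivalently, $\vec v_i$ lies in the discrete subgroup $\tfrac{1}{M!}L\subset\C^d$. By Lemma~\ref{lem:est}, $\vec v_i$ is coordinate-wise asymptotically equivalent to $\vec J_{p_i,q_i,\beta_i}=(J_{p_i,q_i,\beta_i},\dots,J_{p_i,q_i,\beta_i})$ with $J_{p_i,q_i,\beta_i}\in\C^\times$, so in particular the first coordinate of $\vec v_i$ is nonzero for $i$ large. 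Granting the analytic fact that $|J_{p_i,q_i,\beta_i}|\to 0$, we obtain $\vec v_i\to 0$, hence $\vec v_i=0$ for $i$ large by discreteness of $\tfrac{1}{M!}L$ --- contradicting that its first coordinate is nonzero. This proves the proposition; the parenthetical ``possibly infinite'' merely records that we obtain order $>M$ for every $M$ once $p/q$ is large, with no claim on whether these cycles are themselves torsion.

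It remains to justify $|J_{p,q,\beta}|\to 0$ as $p/q\to\infty$, which carries the real content and is the expected main obstacle. By Definition~\ref{def:improper} one has $|J_{p,q,\beta}|=2^{k+1}\pi^{r+1}|c\tau+d|^k(pq)^r\kappa_{p,q,\beta}^k\,m_{k,k}^2\,I^t_{p,q,\beta}$, the unimodular factors $i^r$ and $e^{2\pi i X^t_{p,q,\beta}}$ dropping out. The substitution $y=Y^t_{p,q,\beta}(1+s)$ in the convergent integral $I^t_{p,q,\beta}=\int_{Y^t_{p,q,\beta}}^{\infty}(y^2-(Y^t_{p,q,\beta})^2)^r e^{-2\pi y}\,dy$ gives $I^t_{p,q,\beta}\asymp (Y^t_{p,q,\beta})^r e^{-2\pi Y^t_{p,q,\beta}}$ as $Y^t_{p,q,\beta}\to\infty$; combined with $Y^t_{p,q,\beta}=\gamma_{p,q,\beta}|c\tau+d|^{-2}\sqrt{d_K}/2$ and $\kappa_{p,q,\beta}^2=pq/\gamma_{p,q,\beta}$, one finds that $|J_{p,q,\beta}|$ is comparable to $(pq)^{2r}e^{-2\pi Y^t_{p,q,\beta}}$. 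Since $\gamma_{p,q,\beta}$ --- equal to $pq$ when $\beta=\infty$ and $p/q$ otherwise --- tends to $\infty$ together with $p/q$, the exponential decay of $e^{-2\pi Y^t_{p,q,\beta}}$ overwhelms the polynomial factor, so $|J_{p,q,\beta}|\to 0$. The delicate point, on which I would spend the most care, is precisely to verify that the polynomial growth $(pq)^{2r}$ is genuinely beaten by $e^{2\pi Y^t_{p,q,\beta}}$ throughout the region $p/q\to\infty$; this is immediate when $\beta=\infty$, and for $\beta\neq\infty$ it is the rate of growth of $\gamma_{p,q,\beta}=p/q$ relative to $\log(pq)$ that has to be controlled.
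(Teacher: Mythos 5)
Your proposal follows essentially the same route as the paper: the same reduction of the order question to the lattice quotient $\C^d/L$ (order in a quotient divides the order upstairs, and the order of the multiple $(2^kk!)^2(pq)^r\AJ_{W_k}(\tilde\Delta^{\HC}_{p,q,\beta})$ divides that of $\AJ_{W_k}(\tilde\Delta^{\HC}_{p,q,\beta})$), the same appeal to Lemma \ref{lem:est}, and the same discreteness argument --- the paper phrases it directly (choose $\epsilon$ with $D_\epsilon(0)^d\cap L=\{0\}$ and note $\vec J_{p,q,\beta}\neq 0$) rather than by contradiction, which is cosmetic. The only methodological difference is in the treatment of the integral: the paper evaluates $I^t_{p,q,\beta}$ in closed form by repeated integration by parts, obtaining \eqref{unfold}, i.e.\ $I^t_{p,q,\beta}=e^{-\gamma_{p,q,\beta}\vert c\tau+d\vert^{-2}\pi\sqrt{d_K}}P(\gamma_{p,q,\beta})$ for an explicit polynomial $P$, whereas you use the asymptotic $I^t_{p,q,\beta}\asymp (Y^t_{p,q,\beta})^r e^{-2\pi Y^t_{p,q,\beta}}$. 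Both give $\vert J_{p,q,\beta}\vert\asymp (pq)^{2r}e^{-2\pi Y^t_{p,q,\beta}}$, so nothing is lost by your softer evaluation.

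Concerning the ``delicate point'' you flag: the paper does not address it; it simply asserts from \eqref{unfold} that $\vec J_{p,q,\beta}\to 0$ as $p/q\to\infty$. Your worry is legitimate under the strictest uniform reading --- for $\beta\neq\infty$ the quantity $(pq)^{2r}e^{-c\,p/q}$ does not tend to $0$ uniformly over all of $\mathcal{I}$ if $q$ is allowed to grow much faster than exponentially in $p/q$ --- and since your estimate agrees with the paper's exact formula, this cannot be repaired by computing $I^t_{p,q,\beta}$ more precisely; it is an imprecision in the phrasing of the asymptotic regime, shared by the paper. It is harmless for the intended application: in the proof of Theorem \ref{thm:gr} the prime $q$ is fixed first and only $p$ varies, and for fixed $q$ (or $q$ bounded by any fixed polynomial in $p/q$) one has $(pq)^{2r}e^{-c\,p/q}\to 0$ immediately; equivalently, one may let the constant of Corollary \ref{cor:inford} depend on $q$ without affecting anything downstream. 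So your argument contains no missing idea relative to the paper's; to make it airtight, simply record the regime in which the limit is taken (e.g.\ $q$ fixed, $p\to\infty$), at which point your justification of $\vert J_{p,q,\beta}\vert\to 0$ is complete.
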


\begin{proof}
We have
$\vert J_{p,q,\beta} \vert=2^{k+1}\pi^{r+1} \vert c\tau+d\vert^k (pq)^r \kappa^k_{p,q,\beta}m_{k,k}^2 I^t_{p,q,\beta}$ and 
\[
I^t_{p,q,\beta}=\gamma_{p,q,\beta}^{k+1}\int_{\vert c\tau+d\vert^{-2}\sqrt{d_K}/2}^\infty (y^2-\vert c\tau+d\vert^{-4}d_K/4)^r e^{-2\pi\gamma_{p,q,\beta} y} dy.
\] 
Unfolding the power $(y^2-\vert c\tau+d\vert^{-4}d_K/4)^r$ and integrating by parts repeatedly yields the formula
\begin{equation}\label{unfold}
I^t_{p,q,\beta}=\gamma_{p,q,\beta}^{k+1}e^{-\gamma_{p,q,\beta} \vert c\tau+d\vert^{-2} \pi \sqrt{d_K}} \sum_{j=0}^r \sum_{s=0}^{2j} (-1)^{r-j} \binom{r}{j} \frac{(2j)!}{(2j-s)!} \frac{(\vert c\tau+d\vert^{-2}\sqrt{d_K}/2)^{k-s}}{(2\pi\gamma_{p,q,\beta})^{s+1}}.
\end{equation}
It follows that $\vec{J}_{p,q,\beta}$ tends to $0$ in $\C^d$ as $p/q\to \infty$. Let $\epsilon>0$ such that the open polydisc $D_\epsilon(0)^d\subset \C^d$ of radius $\epsilon$ and center $0$ satisfies $D_\epsilon(0)^d\cap L=\{ 0 \}$ (possible by discreteness of $L$). For $p/q$ large enough, $\vec{J}_{p,q,\beta}$ lies in $D_\epsilon(0)^d$ and thus $\vec{J}_{p,q,\beta} \in L$ if and only if $\vec{J}_{p,q,\beta}=0$. However, $\vec{J}_{p,q,\beta}\neq 0$ as is clear from the definition of the integral $I_{p,q,\beta}$. The closer the value $\vec{J}_{p,q,\beta}$ is to $0\in \C^d$, the larger the order (if finite) of $\vec{J}_{p,q,\beta} \pmod L$ becomes in $\C^d/L$. By Lemma \ref{lem:est}, the $j$-th coordinate satisfies $\AJ_{W_k}((2^k k!)^2(pq)^r\tilde\Delta^{\HC}_{p,q,\beta})_j=J_{p,q,\beta}(1+o(1))$ for all $1\leq j\leq d$. In particular, $\AJ_{W_k}((2^k k!)^2(pq)^r\tilde\Delta^{\HC}_{p,q,\beta})$ (viewed in $\C^d$) tends to $0$ without being equal to $0$ as $p/q\to \infty$. We deduce that $(2^k k!)^2(pq)^r \AJ_{W_k}(\tilde\Delta^{\HC}_{p,q,\beta})$ has large (if finite) order in $S_{k+2}(\Gamma_1(N))^\vee/L_k'$. In particular, the same conclusion holds for $\AJ_{W_k}(\tilde\Delta_{p,q,\beta}^{\HC})$. The result follows since $S_{k+2}(\Gamma_1(N))^\vee/L_k'$ is a quotient of $J^{r+1}(W_{k,\C})$.
\end{proof}

\begin{proposition}\label{cor:estbi}
With the above notations, as $p/q$ tends to $\infty$ for $(p,q)\in \mathcal{I}$, the order of $\AJ_{W_k}(\tilde\Delta^{\HC}_{p,q,\beta})-\AJ_{W_k}(\tilde\Delta^{\HC}_{p,q,\infty})$ becomes large (possibly infinite) in the intermediate Jacobian $J^{r+1}(W_{k,\C})$ for all $\infty\neq \beta\in \PP^1(\F_q)$.
\end{proposition}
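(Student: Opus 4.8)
The plan is to repeat the proof of Proposition \ref{cor:est} with the difference $\tilde\Delta^{\HC}_{p,q,\beta}-\tilde\Delta^{\HC}_{p,q,\infty}$ in place of $\tilde\Delta^{\HC}_{p,q,\beta}$. Fix $\infty\neq\beta\in\PP^1(\F_q)$. First I would combine the linearity of $\AJ_{W_k}$ with Lemma \ref{lem:est}: representing $\AJ_{W_k}\big((2^k k!)^2(pq)^r(\tilde\Delta^{\HC}_{p,q,\beta}-\tilde\Delta^{\HC}_{p,q,\infty})\big)$ in $\C^d$ by the difference of the two explicit representatives produced by Theorem \ref{thm:CAJ}, its $j$-th coordinate is $J_{p,q,\beta}(1+o(1))-J_{p,q,\infty}(1+o(1))$ as $p/q\to\infty$, for each $1\leq j\leq d$.

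The heart of the matter is to show that the $\beta=\infty$ contribution is negligible against the $\beta\neq\infty$ one, i.e. $J_{p,q,\infty}=o(J_{p,q,\beta})$. Using Definition \ref{def:improper} together with the closed form \eqref{unfold} for $I^t_{p,q,\beta}$, one finds that $|J_{p,q,\infty}|/|J_{p,q,\beta}|$ is asymptotic to $e^{-(pq-p/q)|c\tau+d|^{-2}\pi\sqrt{d_K}}$: the exponential comes from the factor $e^{-\gamma_{p,q,\cdot}|c\tau+d|^{-2}\pi\sqrt{d_K}}$ in \eqref{unfold} with $\gamma_{p,q,\infty}=pq$ and $\gamma_{p,q,\beta}=p/q$, while the polynomial data $\kappa_{p,q,\cdot}^{k}$, $\gamma_{p,q,\cdot}^{k+1}$ and the leading terms of the sums in \eqref{unfold} cancel because $k=2r$. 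Since $pq-p/q=(p/q)(q^2-1)\geq 8\,(p/q)\to\infty$ for an odd prime $q\geq 3$, this ratio tends to $0$; hence $J_{p,q,\beta}(1+o(1))-J_{p,q,\infty}(1+o(1))=J_{p,q,\beta}(1+o(1))$, so $\AJ_{W_k}\big((2^k k!)^2(pq)^r(\tilde\Delta^{\HC}_{p,q,\beta}-\tilde\Delta^{\HC}_{p,q,\infty})\big)$ is coordinate-wise asymptotically equivalent to $\vec J_{p,q,\beta}$.

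From there the argument is word for word the conclusion of Proposition \ref{cor:est}: by \eqref{unfold}, $\vec J_{p,q,\beta}\to 0$ in $\C^d$ while $\vec J_{p,q,\beta}\neq 0$ for every $(p,q)\in\mathcal{I}$, so choosing $\epsilon>0$ with $D_\epsilon(0)^d\cap L=\{0\}$, for $p/q$ large the nonzero vector $\AJ_{W_k}\big((2^k k!)^2(pq)^r(\tilde\Delta^{\HC}_{p,q,\beta}-\tilde\Delta^{\HC}_{p,q,\infty})\big)$ lies in $D_\epsilon(0)^d$ and hence has large (if finite) order in $\C^d/L$; the same then holds for $\AJ_{W_k}(\tilde\Delta^{\HC}_{p,q,\beta}-\tilde\Delta^{\HC}_{p,q,\infty})$ in $S_{k+2}(\Gamma_1(N))^\vee/L'_k$, and transferring along the quotient $J^{r+1}(W_{k,\C})\twoheadrightarrow S_{k+2}(\Gamma_1(N))^\vee/L'_k$ yields the statement. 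The one genuinely new ingredient compared with Proposition \ref{cor:est} — and the step I expect to require the most care — is the decay comparison $J_{p,q,\infty}=o(J_{p,q,\beta})$, which must be made uniformly in $q$; the crude bound $q^2-1\geq 8$ handles this, and everything else is a transcription of the earlier proposition.
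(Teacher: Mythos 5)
Your proposal is correct and follows the paper's own argument essentially verbatim: compare $J_{p,q,\infty}$ with $J_{p,q,\beta}$ via the closed form \eqref{unfold}, conclude that the ratio tends to $0$ as $p/q\to\infty$ so that the difference of Abel--Jacobi images is $J_{p,q,\beta}(1+o(1))$ coordinate-wise, and then repeat the endgame of Proposition \ref{cor:est}. One small imprecision: the polynomial data do not literally cancel --- the paper computes $\vert J_{p,q,\infty}/J_{p,q,\beta}\vert = q^{-k}\,\frac{P(pq)}{P(p/q)}\,e^{-\vert c\tau+d\vert^{-2}\pi\sqrt{d_K}(q^2-1)p/q}$ with $\deg P<k$, so a polynomial-in-$q$ prefactor survives --- but this is harmless because the exponential factor dominates it uniformly in $q$, which is exactly the mechanism you (and the paper) invoke.
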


\begin{proof}
Define the polynomial 
\[
P(X):=\sum_{j=0}^r \sum_{s=0}^{2j} (-1)^{r-j} \binom{r}{j} \frac{(2j)!}{(2j-s)!} \frac{(\vert c\tau+d\vert^{-2}\sqrt{d_K}/2)^{k-s}}{(2\pi)^{s+1}}X^{k-s} \in \R[X].
\]
It has degree $<k$. By \eqref{unfold}, for all $\beta\in \PP^1(\F_q)$, we have 
\[
I^t_{p,q,\beta}=e^{-\gamma_{p,q,\beta}\vert c\tau+d\vert^{-2}\pi \sqrt{d_K}}P(\gamma_{p,q,\beta}).
\]
It follows that
\[
\left\vert\frac{J_{p,q,\infty}}{J_{p,q,\beta}}\right\vert = q^{-k}\frac{I^t_{p,q,\infty}}{I^t_{p,q,\beta}}
=q^{-k} \frac{P(pq)}{P(p/q)}e^{-\vert c\tau+d\vert^{-2}\pi\sqrt{d_K}(q^2-1)p/q}. 
\]
In particular, the limit as $p/q\to \infty$ is equal to $0$. By Lemma \ref{lem:est}, we deduce that for all $1\leq j\leq d$, the ratio of the $j$-th coordinates $\vert \AJ_{W_k}((2^k k!)^2(pq)^r\tilde\Delta^{\HC}_{p,q,\infty})_j/\AJ_{W_k}((2^k k!)^2(pq)^r\tilde\Delta_{p,q,\beta})_j\vert$ tends to $0$ as $p/q\to \infty$. Thus, 
\begin{align*}
\AJ_{W_k}((2^k k!)^2(pq)^r\tilde\Delta^{\HC}_{p,q,\beta})_j -\AJ_{W_k}((2^k k!)^2(pq)^r\tilde\Delta^{\HC}_{p,q,\infty})_j & =\AJ_{W_k}((2^k k!)^2(pq)^r\tilde\Delta^{\HC}_{p,q,\beta})_j(1+o(1)) \\
& =J_{p,q,\beta}(1+o(1)),
\end{align*}
as $p/q\to \infty$. 
From this point on, the result follows from the same argument as in the proof of Proposition \ref{cor:est}.
\end{proof}

\begin{corollary}\label{cor:inford}
There exists a constant $C_r>0$ such that for all $(p,q)\in \mathcal{I}$ satisfying $p/q>C_r$, the following statements hold: 
\begin{itemize}
\item[(1)]  For all $\beta \in \PP^1(\F_q)$, the algebraic equivalence class of $\tilde\Delta^{\HC}_{p,q,\beta}$ in $\Gr^{r+1}(W_{k,F_{pq}})$ has infinite order.
\item[(2)] For all $\infty\neq \beta \in \PP^1(\F_q)$, the algebraic equivalence class of $\tilde\Delta_{p,q,\beta}^{\HC}-\tilde\Delta_{p,q,\infty}^{\HC}$ in $\Gr^{r+1}(W_{k,F_{pq}})$ has infinite order. 
\end{itemize} 
\end{corollary}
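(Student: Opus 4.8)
The plan is to run the standard upgrade from large order to infinite order due to Schoen \cite{schoen}, as in \cite[Theorem 2]{bdlp}: Propositions \ref{cor:est} and \ref{cor:estbi} only yield that the complex Abel--Jacobi images of the cycles in \eqref{collec} acquire large, but a priori possibly finite, order as $p/q\to\infty$; I would rule out the finite alternative using the uniform bound $M_r$ of Definition \ref{def:Mr}, which controls the part of the torsion of $\Gr^{r+1}(W_{k,F_{pq}})$ seen by Bloch's map.

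First I would fix the constant. In the proofs of Propositions \ref{cor:est} and \ref{cor:estbi} the relevant error terms are bounded by $e^{-\gamma_{p,q,\beta}\vert c\tau+d\vert^{-2}\pi\sqrt{d_K}}$ with $\gamma_{p,q,\beta}\ge p/q$, so the estimates are uniform in $\beta$. Hence there is a constant $C_r>0$ such that for all $(p,q)\in\mathcal{I}$ with $p/q>C_r$ and all $\beta\in\PP^1(\F_q)$ the order of $\AJ_{W_k}(\tilde\Delta^{\HC}_{p,q,\beta})$ in $S_{k+2}(\Gamma_1(N))^\vee/L_k'$ is infinite or strictly larger than $M_r$, and similarly for $\AJ_{W_k}(\tilde\Delta^{\HC}_{p,q,\beta})-\AJ_{W_k}(\tilde\Delta^{\HC}_{p,q,\infty})$ when $\beta\ne\infty$.

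Next I would argue by contradiction. Fix $(p,q)\in\mathcal{I}$ with $p/q>C_r$ and set $Z:=\tilde\Delta^{\HC}_{p,q,\beta}$ in case (1), $Z:=\tilde\Delta^{\HC}_{p,q,\beta}-\tilde\Delta^{\HC}_{p,q,\infty}$ in case (2); both are null-homologous and, by Definition \ref{def:delta}, rational over $F_{pq}$. Suppose the algebraic equivalence class of $Z$ had finite order in $\Gr^{r+1}(W_{k,F_{pq}})$, hence were torsion. Since $p\ne q$ are distinct primes coprime to $N$, the integer $pq$ is square-free and coprime to $N$, so Corollary \ref{coro:finite} gives $M_r\lambda^{\circ}_{W_k}([Z])=0$. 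Plugging this into the identity \eqref{eq:commu} of Proposition \ref{lem:grbloch}, $\lambda^{\circ}_{W_k}=\comp^{-1}\circ u\circ\AJ_{W_k}$ on torsion classes, and using that $\comp$ is an isomorphism and $u$ is injective (it is the inclusion in \eqref{map:u}), I get $M_r\AJ_{W_k}([Z])=0$ in $S_{k+2}(\Gamma_1(N))^\vee/L_k$, hence also after passing to the coarser quotient $S_{k+2}(\Gamma_1(N))^\vee/L_k'$. Thus the order of the class of $\AJ_{W_k}([Z])$ in $S_{k+2}(\Gamma_1(N))^\vee/L_k'$ divides $M_r$; but $\AJ_{W_k}$ factors through algebraic equivalence (Proposition \ref{lem:gr}), so this class equals that of $\AJ_{W_k}(Z)$, whose order was just shown to be infinite or $>M_r$ --- a contradiction. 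Hence $[Z]$ has infinite order in $\Gr^{r+1}(W_{k,F_{pq}})$, which is assertions (1) and (2).

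The genuine content behind this passage from ``large'' to ``infinite'' --- which the earlier sections already supply --- lies in two points. The first is the finiteness of $H^{k+1}_{\et}(W_{k,\bar\Q},\Q/\Z(r+1))^{\Gal(\bar\Q/F_\infty)}$ (Proposition \ref{prop:finite}): this produces a single bound $M_r$ valid simultaneously over all the fields $F_{pq}$, and it rests on the good reduction of $W_k$ away from $N$ together with Deligne's bounds on Frobenius eigenvalues. The second is the compatibility of Bloch's torsion cycle class map with the complex Abel--Jacobi map (Proposition \ref{prop:blochcomp}), which is what allows the vanishing of an \'etale class to be transferred to the complex Abel--Jacobi side. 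Granting these, the remaining verifications --- the two factorisations through algebraic equivalence, that $pq$ is square-free and coprime to $N$, that the difference cycle in (2) is defined over $F_{pq}$, and the uniformity in $\beta$ --- are routine bookkeeping.
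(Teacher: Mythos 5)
Your proposal is correct and follows essentially the same route as the paper: choose $C_r$ via Propositions \ref{cor:est} and \ref{cor:estbi} so the Abel--Jacobi orders exceed $M_r$, then argue by contradiction using Corollary \ref{coro:finite}, the identity \eqref{eq:commu} ($\lambda^\circ_{W_k}=\comp^{-1}\circ u\circ\AJ_{W_k}$), and the injectivity of $u$ to transfer the annihilation by $M_r$ back to the complex side. Your extra bookkeeping (uniformity in $\beta$, $pq$ square-free and coprime to $N$, passing from $L_k$ to the coarser lattice $L'_k$) is consistent with, and if anything slightly more explicit than, the paper's own write-up.
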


\begin{proof}
Given $\beta \in \PP^1(\F_q)$, we let $\Xi_{p,q, \beta}$ denote either $\tilde\Delta^{\HC}_{p,q,\beta}$ or $\tilde\Delta_{p,q,\beta}^{\HC}-\tilde\Delta_{p,q,\infty}^{\HC}$ (in the latter case we exclude $\beta=\infty$).
Using either Proposition \ref{cor:est} or Proposition \ref{cor:estbi}, we may choose a constant $C_r>0$ such that for all $(p,q)\in \mathcal{I}$ satisfying $p/q > C_r$, the order of $\AJ_{W_k}(\Xi_{p,q, \beta})$ in $J^{r+1}(W_{k,\C})$ is greater that the constant $M_r$ of Definition \ref{def:Mr}, for all $\beta \in \PP^1(\F_q)$. Let $(p,q)\in \mathcal{I}$ with $p/q> C_r$ and let $\beta\in \PP^1(\F_q)$. The cycle $\Xi_{p,q,\beta}$ is defined over $F_{pq}$ by Proposition \ref{prop:fod}, and thus its algebraic equivalence class $[\Xi_{p,q,\beta}]$ belongs to $\Gr^{r+1}(W_{k, F_{pq}})$. Suppose by contradiction that [$\Xi_{p,q,\beta}]$ is torsion in $\Gr^{r+1}(W_{k, F_{pq}})$. Using  the Galois equivariance of the Bloch map (Proposition \ref{prop:bloch}) along with Proposition \ref{lem:grbloch}, we see that $\lambda_{W_k}^\circ([\Xi_{p,q,\beta}])$ belongs to $H^{k+1}_{\et}(W_{k,\bar\Q}, \Q/\Z(r+1))^{\Gal(\bar\Q/F_{pq})}$ and is thus annihilated by $M_r$ by Corollary \ref{coro:finite}. Recall the equality \eqref{eq:commu}: $\lambda^\circ_{W_k} = \comp^{-1} \circ u \circ \AJ_{W_k}$.
We conclude that $u(\AJ_{W_k}([\Xi_{p,q,\beta}]))$ is annihilated by $M_r$. By injectivity of the map $u$, we deduce that $\AJ_{W_k}([\Xi_{p,q,\beta}])$ is annihilated by $M_r$, which contradicts that fact that the order of $\AJ_{W_k}([\Xi_{p,q,\beta}])$ is greater than $M_r$. This proves by contradiction that $[\Xi_{p,q,\beta}]$ has infinite order.
\end{proof}

\section{Infinite rank Griffiths groups}\label{s:gr}

Let $k=2r\geq 2$ be an even integer.
Fix an imaginary quadratic field $K$ with ring of integers $\cO_K$ and discriminant $-d_K$ coprime to $N$. Assume that $K$ satisfies the Heegner hypothesis with respect to $N$, and let $\cN$ denote a choice of cyclic $N$-ideal of $\oh_K$.
Let $H$ be the Hilbert class field of $K$. Let $A$ be an elliptic curve with CM by $\oh_K$ over $H$ and choose the embedding $H\hookrightarrow \C$ of Section \ref{s:convention} such that $A_\C=\C/\oh_K$. Let $\tau:=(-d_K+\sqrt{-d_K})/2$ denote the standard generator of $\oh_K$ so that $\oh_K=\langle 1,\tau\rangle$, and fix a $\Gamma_1(N)$-level structure $t\in A[\cN]$. Then $t=(c\tau+d)/N + \langle 1,\tau\rangle$ for some $c, d\in \Z$ with $c\not\equiv 0 \pmod N$ and $\gcd(c,d,N)=1$ by Proposition \ref{prop:t}. As in Section \ref{s:levelstruc}, let $a,b\in \Z$ such that $\gamma:=\gamma_t=\left(\begin{smallmatrix} a& b \\ c & d \end{smallmatrix}\right) \in \SL_2(\Z)$ (which might require translating $c$ and $d$ by some multiples of $N$).

Of interest is the group $\Gr^{r+1}(W_{k, \bar \Q})$ of algebraic equivalence classes of cycles of codimension $r+1$ defined over $\bar \Q$. More precisely, we will focus on the subgroup 
 $G_{\mathcal{C}}^{\HC}$ of $\Gr^{r+1}(W_{k, K^{\ab}})$ generated by the algebraic equivalence classes of the Heegner cycles in the collection $\mathcal{C}$ defined in \eqref{collec}. 
 
\begin{theorem}\label{thm:gr}
With the above notations and assumptions, we have $\dim_{\Q} G^{\HC}_{\mathcal{C}} \otimes_{\Z} \Q=\infty$. 
\end{theorem}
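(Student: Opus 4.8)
The plan is to show that the Heegner cycles $\tilde\Delta^{\HC}_{p,q,\beta}$ span an infinite-dimensional subspace of $G^{\HC}_{\mathcal{C}}\otimes_\Z\Q$ by producing, for infinitely many pairs $(p,q)\in\mathcal{I}$, a single cycle of infinite order whose Galois conjugates are ``spread out'' enough to give a new dimension each time. More precisely, for each $(p,q)\in\mathcal{I}$ with $p/q>C_r$ (the constant of Corollary \ref{cor:inford}), consider the difference cycle $\Theta_{p,q}:=\tilde\Delta^{\HC}_{p,q,\beta_0}-\tilde\Delta^{\HC}_{p,q,\infty}$ for some fixed $\beta_0\in\F_q$. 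By Corollary \ref{cor:inford}(2), its algebraic equivalence class $[\Theta_{p,q}]$ has infinite order in $\Gr^{r+1}(W_{k,F_{pq}})$, hence is a non-torsion element of $G^{\HC}_{\mathcal{C}}$.

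The first main step is to arrange that the classes $[\Theta_{p,q}]$ for varying $(p,q)$ are $\Q$-linearly independent in $G^{\HC}_{\mathcal{C}}\otimes\Q$. For this I would exploit the Galois action: by Proposition \ref{prop:galcyc}, $\Gal(F_{pq}/F_p)$ acts transitively on $\{\tilde\Delta^{\HC}_{p,q,\beta}\mid\beta\in\PP^1(\F_q)\}$, and this action factors through $\Gal(H_{pq}/H_p)\simeq\Gal(H_q/H)$. Since for distinct primes $p,q$ one has $H_p\cap H_q=H$ and $H_{pq}=H_p\cdot H_q$, the fields $F_{pq}$ for varying $(p,q)\in\mathcal{I}$ are ``independent'' over $K_{\cN}$ in the sense that the corresponding ring class fields are linearly disjoint over $H$. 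The idea is that a nontrivial $\Q$-linear relation $\sum_i a_i[\Theta_{p_i,q_i}]=0$ among finitely many such classes could be averaged against a suitable character of the Galois group to isolate a single summand, contradicting Corollary \ref{cor:inford}. Concretely, I would pick a $(p,q)$ appearing in the relation with largest $p/q$, apply an element $\sigma\in\Gal(\bar\Q/K_{\cN})$ that is trivial on all the other $F_{p_jq_j}$ but acts as a chosen element of $\Gal(H_q/H)$, and use the linearity of the Abel--Jacobi map together with Lemma \ref{lem:est} / Proposition \ref{cor:estbi} (the asymptotic dominance of the $J_{p,q,\beta}$ terms, and the fact that the $\beta\neq\infty$ terms dominate the $\beta=\infty$ term) to force $a_i=0$. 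Averaging $\sum_{\sigma\in\Gal(H_q/H)}\chi(\sigma)\sigma$ against a nontrivial character $\chi$ picks out a cycle essentially of the form $\sum_\beta\chi(\beta)\tilde\Delta^{\HC}_{p,q,\beta}$ whose Abel--Jacobi image, by the explicit formula, is a nonzero sum of the $J_{p,q,\beta}$ which by the estimates of Section \ref{s:asymptotics} is small but nonzero in the intermediate Jacobian, hence of order exceeding $M_r$ once $p/q$ is large; combined with Corollary \ref{coro:finite} and the comparison $\lambda^\circ_{W_k}=\comp^{-1}\circ u\circ\AJ_{W_k}$ (equation \eqref{eq:commu}) this yields the needed contradiction.

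An alternative, perhaps cleaner, route avoids explicit characters: one works prime-by-prime. Suppose $\sum_{i=1}^n a_i[\Xi_i]=0$ in $G^{\HC}_{\mathcal{C}}\otimes\Q$ is a shortest nontrivial relation, where each $\Xi_i$ is one of the generators $[\tilde\Delta^{\HC}_{p_i,q_i,\beta_i}]$; clearing denominators we may take $a_i\in\Z$. Choose the index $i_0$ with $(p_{i_0},q_{i_0})$ maximising $p/q$, and among those a specific $\beta_{i_0}$. There is $\sigma\in\Gal(\bar\Q/K_{\cN})$ fixing every $F_{p_iq_i}$ for $q_i\neq q_{i_0}$ (using linear disjointness of the ring class fields $H_{q_i}$ over $H$) and acting nontrivially on the $q_{i_0}$-tower; applying $1-\sigma$ to the relation kills all terms with $q_i\neq q_{i_0}$ and replaces the others by differences of distinct $\tilde\Delta^{\HC}_{p,q_{i_0},\beta}$, which are again controlled by Proposition \ref{cor:estbi}. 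Iterating and then appealing to the infinite-order statement of Corollary \ref{cor:inford} for each surviving difference cycle forces all $a_i$ with $q_i=q_{i_0}$ to vanish, a contradiction. Since $\mathcal{I}$ is infinite and for each fixed large ratio there are infinitely many admissible $(p,q)$ by Dirichlet, we obtain infinitely many $\Q$-linearly independent classes, i.e.\ $\dim_\Q G^{\HC}_{\mathcal{C}}\otimes_\Z\Q=\infty$.

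The main obstacle I anticipate is making the Galois-averaging argument fully rigorous: one must verify that the Abel--Jacobi image of a $\chi$-isotypic combination $\sum_\beta\chi(\beta)\tilde\Delta^{\HC}_{p,q,\beta}$ is genuinely nonzero (not merely that the individual $J_{p,q,\beta}$ are nonzero), which requires a non-vanishing statement for a character sum of the explicit integrals $I^t_{p,q,\beta}e^{2\pi iX^t_{p,q,\beta}}$; the $\beta$-dependence enters only through $X^t_{p,q,\beta}$ (a nonconstant affine function of $\beta$) while $I^t_{p,q,\beta}$ depends only on $\gamma_{p,q,\beta}$, so the sum is, up to a scalar, $I^t_{p,q,\beta}\sum_\beta\chi(\beta)e^{2\pi i(\text{const}+\beta)p/q}$ for $\beta\neq\infty$, and one needs the linear independence of the exponentials $e^{2\pi i\beta p/q}$ over $\beta\in\F_q$ together with the discreteness of the period lattice $L$. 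This is where I expect the bulk of the technical work to lie; the rest is bookkeeping with the functoriality of $\AJ_{W_k}$ and $\lambda^\circ_{W_k}$ and the finiteness input of Proposition \ref{prop:finite}.
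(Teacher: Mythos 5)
Your proposal assembles the right ingredients---Corollary \ref{cor:inford}, the Galois transitivity of Propositions \ref{transpq} and \ref{prop:galcyc}, and the linear disjointness of ring class fields---but both of your routes break down at the decisive step. In the first route, the $\chi$-averaged object $\sum_\beta\chi(\beta)\tilde\Delta^{\HC}_{p,q,\beta}$ is not a cycle with $\Z$-coefficients (the values of $\chi$ are roots of unity), so the finiteness constant $M_r$ of Definition \ref{def:Mr} and the Bloch-map argument of Corollary \ref{coro:finite}, which bound orders of torsion classes in integral Griffiths groups, do not apply to it as stated; and even granting a workaround, you would need a lower bound on $\bigl\vert\sum_\beta\chi(\beta)J_{p,q,\beta}\bigr\vert$ that beats the error terms of Lemma \ref{lem:est}. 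Since all $J_{p,q,\beta}$ with $\beta\neq\infty$ have the same absolute value, the character sum can exhibit massive cancellation, and no non-vanishing statement (let alone a quantitative one strong enough to survive the $o(1)$ errors and reduction modulo the period lattice) is established---you correctly flag this as the main obstacle, but it is precisely the point where the argument is missing. In the second route, the concluding step is a non sequitur: after applying $1-\sigma$ you are left with a $\Q$-linear relation among several difference classes (several $\beta$'s, and possibly several $p$'s, attached to the same $q_{i_0}$; note also that $q_{i_0}$ may occur as the \emph{larger} prime of another pair, in which case $\sigma$ does not fix that term but sends it to a cycle outside $\mathcal{C}$), and the fact that each individual difference has infinite order (Corollary \ref{cor:inford}(2)) says nothing about their linear independence, which is exactly what is at stake. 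Indeed, full linear independence of all the $[\tilde\Delta^{\HC}_{p,q,\beta}]$ is not available: nothing in the paper rules out that the norm combination $\sum_{\sigma}[(\tilde\Delta^{\HC}_{p,q,\infty})^{\sigma}]$ over a Galois orbit is torsion, so any argument claiming to isolate and kill every coefficient must fail somewhere.

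The paper sidesteps both issues by working with a single orbit and using the ring structure of the group algebra rather than trying to separate generators. Given a prime $\ell>6Nd_K$, it chooses $q$ inert, $q\equiv 1\pmod N$ and $q\equiv -1\pmod\ell$, so that $\ell$ divides $[H_q:H]=(q+1)/u_K$ (Lemma \ref{lem:qinert}); it then takes the order-$\ell$ subgroup $\tilde G_\ell\subset\Gal(F_{pq}/F_p)$, a single auxiliary $p$ with $p/q>C_r$, and the homomorphism $\Psi:\Q[\tilde G_\ell]\to\Gr^{r+1}(W_{k,F_{pq}})\otimes_\Z\Q$ sending $\tilde\sigma\mapsto[(\tilde\Delta^{\HC}_{p,q,\infty})^{\tilde\sigma}]$. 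Its kernel is an ideal of $\Q[\tilde G_\ell]\simeq\Q\times\Q(\zeta_\ell)$, whose only proper ideals are the augmentation ideal and the line spanned by the norm element; Corollary \ref{cor:inford}(1) and (2) give exactly $\Psi(1)\neq 0$ and $\Psi(\tilde\sigma_\ell-1)\neq 0$, which excludes the whole ring and the augmentation ideal as kernel, so the image has $\Q$-dimension at least $\ell-1$. Letting $\ell$ grow gives the theorem, with no need for independence across different pairs $(p,q)$ and no character-sum non-vanishing. If you want to repair your write-up, this group-ring argument is the missing idea; your analytic inputs (Propositions \ref{cor:est} and \ref{cor:estbi} via Corollary \ref{cor:inford}) are exactly the two non-vanishing statements it consumes.
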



\begin{proof}
It suffices to prove that $\dim_{\Q} G^{\HC}_{\mathcal{C}} \otimes_{\Z} \Q \geq \ell-1$ for an arbitrary fixed prime $\ell > 6Nd_K$. Pick an odd prime $q$ which is coprime to $c d_K \vert c\tau+d\vert^2$, congruent to $1$ modulo $N$, inert in $K$, and such that $(q+1)/u_K \equiv 0 \pmod \ell$. Recall that $u_K=\vert\oh_K^\times\vert/2\in \{ 1,2,3 \}$ and in particular it is coprime to $\ell$.  The last condition is thus equivalent to $q\equiv -1 \pmod \ell$. The last three conditions on $q$ are equivalent to a single congruence condition modulo $Nd_K\ell$ by the Chinese Remainder Theorem since $N,d_K$, and $\ell$ are pairwise coprime. In particular, there are infinitely many possible choices for $q$ by Dirichlet's theorem on arithmetic progressions. A single choice of a prime $q$ will suffice for this proof. 

Recall from Lemma \ref{lem:qinert} that the extension $H_q/H$ is cyclic of degree $(q+1)/u_K$. Since $\ell$ divides this degree by assumption on $q$, $\Gal(H_q/H)$ admits a unique cyclic subgroup $G_\ell$ of order $\ell$. Let $\sigma_\ell$ denote a choice of generator of $G_\ell$. By Proposition \ref{prop:transt}, we have 
\begin{equation}\label{betal}
(\psi^t_{q,\infty},\C/\Lambda^t_{q,\infty})^{\sigma_\ell}=(\psi^t_{q,\beta_\ell},\C/\Lambda^t_{q,\beta_\ell})
\end{equation}
in $\Isog_q^{\cN}(A)$
for some $\infty\neq \beta_\ell \in \PP^1(\F_q)$. 

Pick a prime $p$ such that $(p,q)\in \mathcal{I}$ and $p/q$ is greater than the constant $C_r$ of Corollary \ref{cor:inford}. This choice guarantees that the algebraic equivalence classes
\begin{equation}\label{inforderyay}
[\tilde\Delta^{\HC}_{p,q,\infty}], [\tilde\Delta^{\HC}_{p,q,\beta_\ell}], \text{ and } [\tilde\Delta^{\HC}_{p,q,\infty}]-[\tilde\Delta^{\HC}_{p,q,\beta_\ell}] \text{ have infinite order in } \Gr^{r+1}(W_{k, F_{pq}}).
\end{equation}

Restriction of automorphisms induces isomorphisms
\[
\Gal(F_{pq}/K_{\cN})\overset{\sim}{\underset{\text{\cite[(1.39)]{mythesis}}}{\lra}} \Gal(H_{pq}/H)
\qquad \text{ and } \qquad \Gal(H_{pq}/H_p)\overset{\sim}{\underset{\eqref{resgal}}{\lra}} \Gal(H_q/H).
\]
Let $\tilde G_{\ell}\subset \Gal(F_{pq}/K_{\cN})$ be the preimage of $G_\ell$ under the above maps. It is a cyclic subgroup of $\Gal(F_{pq}/F_p)$ of order $\ell$. Denote by $\tilde \sigma_\ell$ the preimage of $\sigma_\ell$, which is a generator of $\tilde{G}_\ell$.  

Define a homomorphism of $\Q$-vector spaces 
\[
\Psi : \Q[\tilde{G}_\ell] \lra \Gr^{r+1}(W_{k,F_{pq}})\otimes_{\Z} \Q, \qquad \tilde{\sigma}_{\ell} \mapsto [(\tilde\Delta^{\HC}_{p,q,\infty})^{\tilde{\sigma}_{\ell}}].
\]
The kernel of $\Psi$ is stable under multiplication by $\Q[\tilde{G}_\ell]$ and is thus an ideal of $\Q[\tilde{G}_\ell]$. Let $\zeta_\ell\in \bar \Q$ be a choice of primitive $\ell$-th root of unity. There is a ring isomorphism
\[
\Q[\tilde{G}_\ell] \overset{\sim}{\lra} \Q \times \Q(\zeta_\ell), \qquad \sum^{\ell-1}_{i=0} \lambda_i \tilde{\sigma}_{\ell}^i \mapsto \left(\sum^{\ell-1}_{i=0} \lambda_i, \sum^{\ell-1}_{i=0} \lambda_i \zeta_\ell^i\right). 
\]
The only proper ideals of the ring $\Q \times \Q(\zeta_\ell)$ are $\{ 0 \}\times \Q(\zeta_\ell)$ and $\Q\times \{ 0 \}$, corresponding respectively to the augmentation ideal and the ideal $\Q N$ in the group ring $\Q[\tilde{G}_\ell]$, where $N=\sum_{i=0}^{\ell-1} \tilde{\sigma}_\ell^i$ is the norm element.

By Propositions \ref{prop:galcyc} and \ref{transpq}, the action of $\tilde{\sigma}_\ell$ on $\tilde\Delta^{\HC}_{p,q,\infty}$ is determined by the action of $\sigma_\ell$ on $(\psi^t_{q,\infty}, \C/\Lambda^t_{q,\infty})$. It follows from \eqref{betal} that
$
(\tilde\Delta^{\HC}_{p,q,\infty})^{\tilde{\sigma}_\ell} = \tilde\Delta^{\HC}_{p,q,\beta_\ell}.
$
By \eqref{inforderyay}, both $\Psi(1)$ and $\Psi(\tilde{\sigma}_\ell-1)$ are not equal to $0$. Thus, $\ker(\Psi)$ is neither all of $\Q[\tilde{G}_\ell]$ nor the augmentation ideal. In particular, it must either be trivial or equal to $\Q N$, which implies that $\dim_\Q \Q[\tilde{G}_\ell]/\ker(\Psi) \geq \ell-1$. 
\end{proof}

 \begin{corollary}\label{coro}
With the above notations and assumptions, let $0\leq k'=2r' \leq k$ be another even integer and let $X_{k,k'}:=W_{k,H}\times_H A^{k'}$. The subgroup of $\Gr^{r+r'+1}(X_{k,k', K^{\ab}})$ generated by the algebraic equivalence classes of the variants of generalised Heegner cycles $\Delta_{k,k', \psi^t_{p,q,\beta}}$ indexed by $(p,q)\in \mathcal{I}$ has infinite rank.
 \end{corollary}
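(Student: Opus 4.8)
The plan is to derive the statement from Theorem~\ref{thm:gr} by pushing the variant cycles on $X_{k,k'}$ forward to Heegner cycles on $W_k$ along the correspondences of Proposition~\ref{prop:relation}. Since the rank of a subgroup is its $\Q$-dimension after tensoring with $\Q$, and $\tilde\Delta_{k,k',\varphi}=m_{k,k'}\Delta_{k,k',\varphi}$ differs from $\Delta_{k,k',\varphi}$ only by a nonzero integer, it suffices to prove that the $\Q$-subspace
\[
V:=\Bigl\langle\, [\tilde\Delta_{k,k',\psi^t_{p,q,\beta}}]\ :\ (p,q)\in\mathcal{I},\ \beta\in\PP^1(\F_q)\,\Bigr\rangle_\Q\ \subseteq\ \Gr^{r+r'+1}(X_{k,k',K^{\ab}})\otimes_\Z\Q
\]
is infinite-dimensional, where $\tilde\Delta_{k,k',\psi^t_{p,q,\beta}}$ is the cycle of Definition~\ref{def:delta}, defined over $F_{pq}\subseteq K^{\ab}$.

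The key remark is that the correspondence $\Pi_{k,k',\varphi}$ of \eqref{Pikk} depends on $\varphi$ only through its degree $d_\varphi$, since its defining closed immersion $\Psi_{k,k',\varphi}=(\id_{W_k},(\id_A,[d_\varphi\sqrt{-d_K}])^{r'},\id_{W_k})$ involves $\varphi$ solely via the endomorphism $[d_\varphi\sqrt{-d_K}]\in\End(A)$. As $d_{\psi^t_{p,q,\beta}}=pq$ for every $\beta\in\PP^1(\F_q)$ (Section~\ref{s:explicit}), for a fixed $(p,q)\in\mathcal{I}$ we obtain one and the same correspondence $\Pi_{pq}:=\Pi_{k,k',\psi^t_{p,q,\beta}}\in\corr^{-r'}(X_{k,k'},W_{k,H})$, defined over $H\subseteq K^{\ab}$ and independent of $\beta$. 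Its push-forward carries algebraically trivial cycles to algebraically trivial cycles (algebraic equivalence being generated by differences of fibres of families of cycles over smooth curves, a relation preserved by correspondences), hence descends to a $\Q$-linear map $(\Pi_{pq})_*\colon\Gr^{r+r'+1}(X_{k,k',K^{\ab}})\otimes\Q\to\Gr^{r+1}(W_{k,K^{\ab}})\otimes\Q$, and by Proposition~\ref{prop:relation}, together with $\tilde\Delta_{k,k',\varphi}=m_{k,k'}\Delta_{k,k',\varphi}$, $\tilde\Delta^{\HC}_\varphi=m_{k,0}\Delta^{\HC}_\varphi$ and $m_{k,k'}/m_{k,0}=2^{k'}(k')!$,
\[
(\Pi_{pq})_*[\tilde\Delta_{k,k',\psi^t_{p,q,\beta}}]=2^{k'}(k')!\,(pq)^{r'}\,[\tilde\Delta^{\HC}_{p,q,\beta}]\qquad\text{for all }\beta\in\PP^1(\F_q),
\]
a nonzero rational multiple of the Heegner cycle class of Definition~\ref{def:delta}. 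Since $\beta\mapsto[\tilde\Delta_{k,k',\psi^t_{p,q,\beta}}]$ and $\beta\mapsto[\tilde\Delta^{\HC}_{p,q,\beta}]$ span $V_{(p,q)}:=\langle[\tilde\Delta_{k,k',\psi^t_{p,q,\beta}}]:\beta\rangle_\Q$ and $V^{\HC}_{(p,q)}:=\langle[\tilde\Delta^{\HC}_{p,q,\beta}]:\beta\rangle_\Q$ respectively, the displayed identity shows that $(\Pi_{pq})_*$ maps $V_{(p,q)}$ onto $V^{\HC}_{(p,q)}$, whence $\dim_\Q V_{(p,q)}\geq\dim_\Q V^{\HC}_{(p,q)}$ for every $(p,q)\in\mathcal{I}$.

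To conclude, I would recall that the proof of Theorem~\ref{thm:gr} exhibits, for every prime $\ell>6Nd_K$, a pair $(p_\ell,q_\ell)\in\mathcal{I}$ for which $\dim_\Q V^{\HC}_{(p_\ell,q_\ell)}\geq\ell-1$: there it is shown that the orbit of the class $[\tilde\Delta^{\HC}_{p_\ell,q_\ell,\infty}]$ under the order-$\ell$ cyclic subgroup of $\Gal(F_{p_\ell q_\ell}/F_{p_\ell})$ constructed in that proof spans a subspace of $\Gr^{r+1}(W_{k,K^{\ab}})\otimes\Q$ of dimension at least $\ell-1$, and by Propositions~\ref{prop:galcyc} and~\ref{transpq} this orbit is contained in $\{[\tilde\Delta^{\HC}_{p_\ell,q_\ell,\beta}]:\beta\in\PP^1(\F_{q_\ell})\}$, whose $\Q$-span is exactly $V^{\HC}_{(p_\ell,q_\ell)}$. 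Combining with the previous paragraph, $\dim_\Q V\geq\dim_\Q V_{(p_\ell,q_\ell)}\geq\ell-1$ for every such $\ell$, so $V$ is infinite-dimensional, which is the assertion. The one point I expect to require care is the independence of $\Pi_{k,k',\varphi}$ on $\beta$ for a fixed $(p,q)$, on which the entire argument rests—it is what makes $(\Pi_{pq})_*$ a single linear map sending the full family $\{[\tilde\Delta_{k,k',\psi^t_{p,q,\beta}}]\}_\beta$ to nonzero multiples of $\{[\tilde\Delta^{\HC}_{p,q,\beta}]\}_\beta$, so that no separate analogue of Proposition~\ref{prop:galcyc} for the variant cycles is needed. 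Everything else—functoriality of push-forward, its compatibility with algebraic equivalence and with base change along $F_{pq}\hookrightarrow K^{\ab}$, and the bookkeeping of normalisation factors—is routine.
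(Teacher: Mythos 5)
Your proof is correct and follows the paper's own route: the paper's proof of this corollary is precisely ``combine Theorem \ref{thm:gr} with Proposition \ref{prop:relation}'', i.e.\ push the variant cycles forward to Heegner cycles along the correspondences $\Pi_{k,k',\varphi}$. Your extra observations—that for fixed $(p,q)$ the correspondence depends only on $d_\varphi=pq$, so one linear map $(\Pi_{pq})_*$ handles the whole $\beta$-family, and that one should invoke the per-pair bound $\dim_\Q V^{\HC}_{(p,q)}\geq \ell-1$ coming from the \emph{proof} of Theorem \ref{thm:gr} rather than only its statement—are exactly the bookkeeping the paper leaves implicit, not a different method.
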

 
 \begin{proof}
This follows by combining Theorem \ref{thm:gr} and Proposition \ref{prop:relation}. 
\end{proof}

\begin{remark}
The method of Section \ref{s:caj} can be used to give a formula for the complex Abel--Jacobi images of the variants of generalised Heegner cycles $\Delta_{k,k', \varphi}$ with $(\varphi,A')\in \Isog^{\cN}(A)$, as these are images of generalised Heegner cycles under certain correspondences by \cite[Proposition 4.1.1]{bdp3}. Such a formula can then directly be used to prove Theorem \ref{coro}. We have opted not to do so, as it is enough to know the images of Heegner cycles under the complex Abel--Jacobi map in order to deduce Corollary \ref{coro}.
\end{remark}


\subsection*{Acknowledgements}
It is a pleasure to thank Henri Darmon and Ari Shnidman for relevant suggestions and interesting conversations. The author thanks the anonymous referee for constructive criticism and for suggesting to make the proof of Theorem \ref{thm:gr} self-contained. During the preparation of this article, the author was partially funded by an Emily Erskine Endowment Fund Postdoctoral Research Fellowship at the Hebrew University of Jerusalem.

\phantomsection
\bibliographystyle{plain}
\bibliography{HeegnerCycles_arXivV3.bib}

\end{document}